\documentclass[11pt,a4paper,reqno]{amsart}
\usepackage[english]{babel}
\usepackage[applemac]{inputenc}
\usepackage[T1]{fontenc}
\usepackage{palatino}
\usepackage{amsmath}
\usepackage{amssymb}
\usepackage{amsthm}
\usepackage{amsfonts}
\usepackage{graphicx}
\usepackage{color}
\usepackage{vmargin}
\usepackage{mathtools}
\usepackage{comment}

\setmarginsrb{20mm}{20mm}{20mm}{20mm}{10mm}{10mm}{10mm}{10mm}

\theoremstyle{plain}
\newtheorem{theorem}{Theorem}[section]
\newtheorem{prop}[theorem]{Proposition}
\newtheorem{lemma}[theorem]{Lemma}
\newtheorem{corol}[theorem]{Corollary}
\theoremstyle{definition}
\newtheorem{definition}{Definition}[section]

\newtheorem{remark}{\textnormal{\textbf{Remark}}}[section]
\newtheorem*{remark*}{\textnormal{\textbf{Remark}}}


\def\vint{\mathop{\mathchoice%
          {\setbox0\hbox{$\displaystyle\intop$}\kern 0.22\wd0%
           \vcenter{\hrule width 0.6\wd0}\kern -0.82\wd0}%
          {\setbox0\hbox{$\textstyle\intop$}\kern 0.2\wd0%
           \vcenter{\hrule width 0.6\wd0}\kern -0.8\wd0}%
          {\setbox0\hbox{$\scriptstyle\intop$}\kern 0.2\wd0%
           \vcenter{\hrule width 0.6\wd0}\kern -0.8\wd0}%
          {\setbox0\hbox{$\scriptscriptstyle\intop$}\kern 0.2\wd0%
           \vcenter{\hrule width 0.6\wd0}\kern -0.8\wd0}}%
          \mathopen{}\int}
\def\vintslides{\mathop{\mathchoice%
          {\setbox0\hbox{$\displaystyle\intop$}\kern 0.22\wd0%
           \vcenter{\hrule height 0.04em width 0.6\wd0}\kern -0.82\wd0}%
          {\setbox0\hbox{$\textstyle\intop$}\kern 0.2\wd0%
           \vcenter{\hrule height 0.04em width 0.6\wd0}\kern -0.8\wd0}%
          {\setbox0\hbox{$\scriptstyle\intop$}\kern 0.2\wd0%
           \vcenter{\hrule height 0.04em width 0.6\wd0}\kern -0.8\wd0}%
          {\setbox0\hbox{$\scriptscriptstyle\intop$}\kern 0.2\wd0%
           \vcenter{\hrule height 0.04em width 0.6\wd0}\kern -0.8\wd0}}%
          \mathopen{}\int}

\def \harm{\mathcal{H}}

\def \codim{{\rm{codim}}}
\def \div {{\rm div}}

\newcommand*{\abs}[1]{\left\vert{#1}\right\vert}

\newcommand*{\Lip}{\mathrm{Lip}\,}

\newcommand*{\dist}{\mathrm{dist}}

\newcommand{\spt}{\operatorname{spt}}

\newcommand{\N}{{\mathbb N}}
\newcommand{\R}{{\mathbb R}}

\newcommand{\ep}{\epsilon}
\newcommand{\Om}{\Omega}

\newcommand{\LIP}{\operatorname{LIP}}

\newcommand{\Ha}{\mathcal{H}}
\newcommand{\ud}{\mathrm {d}}
\newcommand{\AMV}{\operatorname{AMV}}
\newcommand{\id}{\operatorname{id}}
\newcommand{\M}{\mathcal{M}}

\definecolor{blau}{rgb}{0.1,0.0,0.9}

\newcommand{\red}{\color{red}}
\newcounter{komcounter}
\numberwithin{komcounter}{section}

\newcommand{\kom}[1]{}
\renewcommand{\kom}[1]{$\backslash$ {\bf  \red #1}$\backslash$}

\title{Asymptotically mean value harmonic functions in doubling metric measure spaces}

\author[Tomasz Adamowicz]{Tomasz Adamowicz{\small$^1$}}
\address{T.A.: Institute of Mathematics, Polish Academy of Sciences,
\'Sniadeckich 8, Warsaw, 00-656, Poland\/}
\email{tadamowi@impan.pl}

\author[Antoni Kijowski]{Antoni Kijowski}
\address{A.K.: Analysis on Metric Spaces Unit, Okinawa Institute of Science and Technology Graduate University, Okinawa, 904-0495, Japan\/}
\email{antoni.kijowski@oist.jp}

\author[Elefterios Soultanis]{Elefterios Soultanis{\small$^2$}}
\address{E.S.: Radboud University, IMAPP, Heyendaalseweg 135, 6525 AJ Nijmegen, The Netherlands\/}
\email{elefterios.soultanis@gmail.com}

\begin{document}
\maketitle

\footnotetext[1]{T. Adamowicz was supported by a grant of National Science Center, Poland (NCN),
 UMO-2017/25/B/ST1/01955.}
\footnotetext[2]{E. Soultanis was supported by Swiss National Foundation grant  no.  182423.}
\begin{abstract}
We consider functions with an asymptotic mean value property, known to characterize harmonicity in Riemannian manifolds, in doubling metric measure spaces. We show that the strongly amv-harmonic functions are H\"older continuous for any exponent below one. More generally, we define the class of functions with finite amv-norm and show that functions in this class belong to a fractional Haj\l asz--Sobolev space and their blow-ups satisfy the mean-value property. Furthermore, in the weighted Euclidean setting we find an elliptic PDE satisfied by amv-harmonic functions.
\newline
\newline \emph{Keywords}: Asymptotic mean value property, elliptic PDEs, harmonic functions, Gromov--Hausdorff convergence, H\"older continuity, mean value property, Sobolev spaces, weighted Euclidean spaces.
\newline
\newline
\emph{Mathematics Subject Classification (2010):} Primary: 31E05; Secondary: 53C23, 35R03.
\end{abstract}

\setcounter{tocdepth}{1}
\tableofcontents

\section{Introduction}

\subsection{Overview}
A central feature of harmonic functions on Euclidean domains is the mean value property. Although the Euclidean structure is crucial for its validity, the mean value property can be expressed using only the metric and the measure, in terms of having vanishing \emph{$r$-laplacian}. The $r$-laplacian of a function $u\in L^1_{loc}(X)$ on a metric measure space $X=(X,d,\mu)$ is defined by

\begin{equation}\label{eq:rlap}
\Delta_{r}^\mu u(x)=\Delta_ru(x)=\frac{u_{B_r(x)}-u(x)}{r^2},\quad x\in X,
\end{equation}
where $u_{B_r(x)}$ stands for the mean-value of $u$ over a ball $B_r(x)$. The classical mean value property states that, in a Euclidean domain $\Omega$, a harmonic function $u$ satisfies $\Delta_r u(x)=0$, for all $0<r<\dist(x,\partial\Omega).$ Functions on a domain of a metric measure space with this property are called \emph{mean value harmonic} (mv-harmonic), see Definition \ref{def:mv}.

It turns out that harmonic functions rarely enjoy the mean value property outside the Euclidean setting and instead satisfy an \emph{asymptotic mean value property}, where the pointwise limit for $r$ approaching $0$ in \eqref{eq:rlap} vanishes. For example, this is the case of smooth Riemannian manifolds~\cite[Proposition 4.2]{aks2}, whereas the mean value property for harmonic functions on manifolds is known to hold only on the so-called harmonic manifolds. The Lichnerowicz conjecture, proven for manifolds  of dimension 2-5, characterizes harmonic manifolds as either flat or rank-one symmetric, see Example 4 in~\cite{agg} and references therein.


Apart from the classical setting, the $r$-laplacian also arises in approximation problems of Riemannian manifolds by graphs~\cite{bik}, and the mean value property plays a role in geometric group theory in Kleiner's proof of Gromov's polynomial growth theorem~\cite{kleiner} (see also \cite{mi-te, mi-te2} for a notion of amv-laplacian on metric measure spaces). Furthermore, \cite[Theorem 1]{co} indirectly relates amv-harmonicity to functions with bounded variation. Namely, the result characterizes $C^1$-minimal surfaces $S$ by observing that a certain piecewise constant function $f_S$ is pointwise amv-harmonic in the sense that $\Delta_r f_S\to 0$ pointwise, as $r~\to~0$ on $S$. Moreover, the proof of \cite[Theorem 1]{co} uses the relation between the amv-harmonic operator and a nondegenerate $1$-Laplacian  $\div(\nabla/ \sqrt{1+|\nabla|^2})$. The notion of $r$-laplacian also appears in the context of $p$-harmonic type equations and the stochastic games~\cite{mpr, mpr2, arlo}. In the setting of Carnot groups the $r$-Laplacian and its relations to subelliptic harmonic functions have been studied, for instance, in~\cite{adw, flm, fp}.

In this paper we consider \emph{asymptotically mean value harmonic} (amv-harmonic) functions on doubling metric measure spaces $X=(X,d,\mu)$ based on the asymptotic vanishing of the mv-laplacians $\Delta_r$ as $r\to 0$. In this generality the operators $\Delta_r$ are non-symmetric and there is no yet the existence theory for amv-harmonic functions. Nevertheless, some regularity results hold, see Theorems \ref{thm:holder} and \ref{thm:mvlip}. Additionally we obtain that amv-harmonicity translates to the mean value property of blow-ups, cf. Theorem \ref{thm:blowup}.

We demonstrate in the model case of weighted Euclidean spaces that in the limit $r\to 0$ the mv-laplace operators $\Delta_r$ give rise to a non-symmetric second order operator, leading to a characterization of amv-harmonicity in terms of an elliptic second order PDE, cf. Theorem \ref{thm:weight}.

	
The current manuscript is the first of two where we consider amv-harmonicity in nonsmooth settings. In the second part \cite{aks2} we prove e.g. that in non-collapsed RCD-spaces Cheeger harmonic functions are weakly amv-harmonic, and that in Carnot groups weak and strong amv-harmonicity equivalently characterize subelliptic harmonic functions. We moreover obtain Blaschke--Privaloff--Zaremba type theorems in step 2 Carnot groups and Alexandrov surfaces.
	

\subsection{Amv-harmonic functions and their regularity}
The following two notions are based on two ways to interpret the limit $\displaystyle \lim_{r\to 0}\Delta_ru=0$. 
\begin{definition}[Weak and strong AMV-harmonicity]\label{main-def1}
Let $\Omega\subset X$ be a domain in a metric measure space $X=(X,d,\mu)$. A function $u\in L^1_{loc}(\Omega)$ is \emph{strongly amv-harmonic} in $\Omega$ if
	\[
	\lim_{r\to 0}\|\Delta_r u\|_{L^\infty(K)}=0 \ \textrm{ for any compact }K\subset \Omega,
	\]
and \emph{weakly amv-harmonic} if
	\begin{equation*}
		\lim_{r\to 0}\int_X\varphi\Delta_ru\ud\mu=0\ \textrm{ for any }\varphi\in \LIP_c(\Omega).
	\end{equation*}
\end{definition}

Our first main result shows that strongly amv-harmonic functions are H\"older continuous for any exponent below one. 

\begin{theorem}\label{thm:holder}
	Let $\Omega\subset X$ be a domain in a complete doubling metric measure space. Then any strongly amv-harmonic function on $\Omega$ is locally $\alpha$-H\"older continuous for any $\alpha\in (0,1)$.
\end{theorem}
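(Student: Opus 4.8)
The plan is to bootstrap the Hölder exponent, starting from mere local boundedness. For a compact $K\subset\Omega$ set $\varepsilon_K(r):=\sup_{0<\rho\le r}\|\Delta_\rho u\|_{L^\infty(K)}$; by strong amv-harmonicity this is finite for small $r$, and unwinding the definition of $\Delta_r$ gives the basic identity $u_{B_s(x)}-u(x)=s^2\Delta_s u(x)$, hence
\[
|u_{B_s(x)}-u(x)|\le s^2\varepsilon_K(s),\qquad x\in K,\ 0<s<\dist(K,\partial\Omega).
\]
A first observation is that $u$ is continuous, hence locally bounded: for $x_j\to x$ one has $u_{B_r(x_j)}\to u_{B_r(x)}$ for a.e.\ small $r$ by dominated convergence (using $u\in L^1_{loc}$ and that $\mu$-a.e.\ sphere $\{d(\cdot,x)=r\}$ is null), and sandwiching this between the basic inequality at $x_j$ and at $x$ forces $u(x_j)\to u(x)$. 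It therefore suffices to show that the scale-$\rho$ oscillation
\[
\Phi_K(\rho):=\sup\Big\{\sup_{B_\rho(z)}u-\inf_{B_\rho(z)}u:\ z\in K\Big\}
\]
satisfies $\Phi_K(\rho)\lesssim\rho^\alpha$ for every $\alpha<1$ and every compact $K\Subset\Omega$ (which immediately yields local $\alpha$-Hölder continuity for close points, hence everywhere on $K$).

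The heart of the proof is a comparison of the averages $u_{B_s(x)}$ and $u_{B_s(y)}$ over balls with nearby centers, valid in any doubling space. Fix $x,y$ with $\eta:=d(x,y)$ small and a radius $r$ with $4\eta\le r\le r_0$. Integrating $s\mapsto\mu(B_s(x))-\mu(B_{s-\eta}(x))$ over $[r/2,r]$ bounds it there by $\eta\,\mu(B_r(x))$; hence, by Chebyshev's inequality and doubling, the set of "good" radii $s\in[r/2,r]$ with $\mu(B_s(x))-\mu(B_{s-\eta}(x))\le C\frac{\eta}{r}\mu(B_s(x))$ has measure more than $3r/8$, and the same holds for $y$, so the two good sets intersect; fix $s$ in the intersection. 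Since $B_{s-\eta}(x)\subset B_s(y)$, the set $B_s(x)\setminus B_s(y)$ lies in the thin shell $B_s(x)\setminus B_{s-\eta}(x)$, so $\mu(B_s(x)\setminus B_s(y))\le C\frac{\eta}{r}\mu(B_s(x))$ and symmetrically for $y$. Writing $u_{B_s(x)}$ and $u_{B_s(y)}$ as convex combinations of their averages over $B_s(x)\cap B_s(y)$ and over the respective "new" parts, and bounding the latter by the oscillation over $B_{2r}(x)\supset B_s(x)\cup B_s(y)$, we obtain
\[
|u_{B_s(x)}-u_{B_s(y)}|\le C\,\tfrac{\eta}{r}\,\Big(\sup_{B_{2r}(x)}u-\inf_{B_{2r}(x)}u\Big).
\]
This is the step I expect to be the main obstacle: in a general doubling space $r\mapsto\mu(B_r(x))$ need not be continuous (spheres may carry mass), so for a fixed $r$ the symmetric difference $B_r(x)\,\triangle\,B_r(y)$ need not have small measure even when $d(x,y)\ll r$; the averaging-plus-pigeonhole selection of a good radius, carried out simultaneously for both centers, is precisely what repairs this, and it is the only place where the doubling hypothesis is genuinely used.

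Combining the last display with the basic inequality at $x$ and at $y$ (at radius $s\le r$) gives the oscillation recursion
\[
|u(x)-u(y)|\le 2r^2\varepsilon_{K'}(r)+C\,\tfrac{\eta}{r}\,\Phi_{K'}(2r),\qquad \eta=d(x,y)\le r/4,
\]
for a slightly larger compact $K'\Supset K$ accommodating the balls $B_{2r}(x)$. Now I iterate: assume $u$ is locally $\beta$-Hölder for some $\beta\in[0,1)$ — the case $\beta=0$ being the local boundedness already established, so that $\Phi_{K'}(2r)\le C(2r)^\beta$ — plug this into the recursion and optimize in $r$, balancing $r^2$ against $\eta\,r^{\beta-1}$, i.e.\ taking $r\sim\eta^{1/(3-\beta)}$ (admissible for small $\eta$, since then $4\eta\le r\le r_0$); this yields $|u(x)-u(y)|\lesssim d(x,y)^{\beta'}$ locally with $\beta'=\tfrac{2}{3-\beta}$, and $0\le\beta<1$ forces $\beta<\beta'<1$. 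Iterating produces the increasing sequence $\beta_0=\tfrac{2}{3}$, $\beta_{n+1}=\tfrac{2}{3-\beta_n}$, which converges to $1$; since each step costs only a slight shrinking of the compact set and finitely many steps suffice to exceed any prescribed $\alpha<1$, we conclude that $u$ is locally $\alpha$-Hölder continuous for every $\alpha\in(0,1)$.
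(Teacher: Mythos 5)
Your proof is correct, and its two key mechanisms are exactly the ones the paper uses: (i) a refined averaging over the radius variable to compensate for the possible failure of annular decay (your Chebyshev/pigeonhole selection of a common good radius $s\in[r/2,r]$ for both centers is a pointwise version of the paper's operator $A^ru=\frac 2r\int_{r/2}^ru_t\,\ud t$ together with Lemma \ref{elementary} and Proposition \ref{lipaprox}), and (ii) the self-improving iteration $\beta\mapsto 2/(3-\beta)$ obtained by balancing $r^2$ against $d(x,y)r^{\beta-1}$, which is precisely Proposition \ref{prop:improve} specialized to $p=\infty$. Where you genuinely diverge is in the packaging: you work directly with $L^\infty$ bounds and sup--inf oscillations and close the argument entirely at the level of pointwise Hölder estimates, whereas the paper runs the iteration in $L^p$ with Hajłasz gradients and the sharp maximal function, obtains fractional Hajłasz--Sobolev regularity $M^{\alpha,p}_{loc}$ for all $\alpha<1$ (Theorem \ref{thm:frachaj}), and only then deduces Hölder continuity via the fractional Morrey embedding. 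Your route is more elementary and self-contained for the $L^\infty$ statement of Theorem \ref{thm:holder}, at the cost of not yielding the finite-$p$ results (Theorem \ref{thm:frachaj} and Corollary \ref{cor:holder}). Two small points you should make explicit: the inequality $|u_{B_s(x)}-u(x)|\le s^2\varepsilon_K(s)$ coming from the essential supremum holds only for a.e.\ $x$ (for each $s$), so the good-radius set should additionally be intersected with the full-measure set of radii at which the pointwise bound holds at $x$ and at $y$ (a Fubini argument), after which one obtains a continuous representative; and in the comparison of averages the constant $c$ should be taken as the mean of $u$ over $B_s(x)\cap B_s(y)$ (or any value in the range of $u$ on $B_{2r}(x)$) so that the oscillation bound applies — both are routine and do not affect the argument.
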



Since the operators $\Delta_r$ are not (asymptotically) symmetric and there is no meaningful limit operator defined on a large enough function space, in order to prove Theorem \ref{thm:holder} we consider instead the natural second order Sobolev-type space
\begin{equation}\label{eq:amvdomain}
	\AMV^{p}(X):=\left\{ u\in L^p(X):\ \|u\|_{\AMV^{p}}<\infty\right\},
\end{equation}
where $1\le p\le \infty$ and
\begin{equation}\label{eq:amvnorm}
	\|u\|_{\AMV^{p}}:=\limsup_{r\to 0}\|\Delta_ru\|_{L^p(X)}
\end{equation}
is the amv-norm of $u$. The class $\AMV^{p}_{loc}(X)$ is defined as the space of functions $u\in L^p_{loc}(X)$ for which $\displaystyle \limsup_{r\to 0}\|\Delta_ru\|_{L^p(K)}<\infty$ for every compact $K\subset X$. The following embedding result relates $\AMV^{p}_{loc}(X)$ to the fractional Haj\l asz--Sobolev spaces $M^{\alpha,p}_{loc}(X)$, see Section \ref{sec:holder}.

\begin{theorem}\label{thm:frachaj}
	Let $\Omega\subset X$ be an open subset of a proper locally doubling metric measure space $X=(X,d,\mu)$, and let $u\in \AMV_{loc}^p(\Omega)$. Then $u\in M_{loc}^{\alpha,p}(\Omega)$ for every $0<\alpha<1$. 
\end{theorem}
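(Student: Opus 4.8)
The goal is to show that finite local amv-norm forces membership in the fractional Hajłasz–Sobolev class $M^{\alpha,p}_{loc}$ for every $\alpha\in(0,1)$. Recall that $u\in M^{\alpha,p}_{loc}(\Omega)$ means that, locally, there is a nonnegative $g\in L^p_{loc}$ such that $|u(x)-u(y)|\le d(x,y)^\alpha(g(x)+g(y))$ for a.e. $x,y$ in the relevant set. So the task reduces to producing such a fractional gradient $g$ with controlled $L^p$-norm out of the hypothesis $\limsup_{r\to 0}\|\Delta_r u\|_{L^p(K)}<\infty$ on compacta. The natural candidate is a maximal-type function built from the $r$-laplacians, e.g. (after localizing to a ball $B$ compactly contained in $\Omega$)
\[
g(x):=\sup_{0<r<r_0}r^{2-\alpha}|\Delta_r u(x)|=\sup_{0<r<r_0}r^{-\alpha}\bigl|u_{B_r(x)}-u(x)\bigr|,
\]
possibly composed with the Hardy–Littlewood maximal operator to absorb averaging, and with $r_0$ small enough that all the balls $B_r(x)$ with $x\in B$ stay inside $\Omega$.

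**Key steps.** First I would fix a ball $B=B(x_0,R)$ with $2B\subset\subset\Omega$ and $r_0\le R$, and establish a telescoping/chaining estimate: for $x$ a Lebesgue point of $u$ one writes $u(x)=\lim_{j\to\infty}u_{B_{2^{-j}r}(x)}$ and bounds $|u(x)-u_{B_r(x)}|$ by the sum $\sum_{j\ge 0}|u_{B_{2^{-j-1}r}(x)}-u_{B_{2^{-j}r}(x)}|$; by the doubling property each successive difference is comparable to $|u_{B_{\rho}(x)}-u(x)|$-type quantities at scale $\rho=2^{-j}r$, which are exactly $\rho^2|\Delta_\rho u(x)|$ up to harmless averaging over the smaller ball. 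Crucially, the factor $\rho^2$ produces a geometric series $\sum_j (2^{-j}r)^2\rho^{-2}\cdots$ — and here is where $\alpha<1$, in fact any $\alpha<2$, is used: pulling out $r^\alpha$ leaves $\sum_j 2^{-j(2-\alpha)}<\infty$. This gives $|u(x)-u_{B_r(x)}|\lesssim r^\alpha g(x)$ with $g$ as above (one must be slightly careful and replace $|\Delta_\rho u(x)|$ by a maximal average $M(|\Delta_\rho u|)(x)$ over a slightly larger ball to handle the difference of averages at two scales; this is standard). Second, for two points $x,y\in B$ with $d(x,y)=:r<r_0$, I compare $u(x)$ and $u(y)$ through the common ball $B_{2r}(x)\supset B_r(y)$: $|u(x)-u(y)|\le |u(x)-u_{B_{2r}(x)}|+|u_{B_{2r}(x)}-u_{B_{2r}(y)}|+|u_{B_{2r}(y)}-u(y)|$, the first and third terms controlled by the chaining estimate and the middle term controlled by $|u_{B_{2r}(x)}-u_{B_{2r}(y)}|\lesssim r^\alpha(Mg(x)+Mg(y))$ again by a short chain between the two comparable balls. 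This yields the pointwise Hajłasz inequality with fractional gradient $C(Mg)$ on $B$. Third, I bound $\|g\|_{L^p(B)}$: since $g(x)\le \sup_{0<r<r_0}r^{2-\alpha}|\Delta_r u(x)| $ and $2-\alpha>1$, the maximal supremum over $r<r_0$ of $r^{2-\alpha}|\Delta_r u(x)|$ — roughly speaking a "fractional $r$-maximal function" — is dominated in $L^p$ by $\sup_{r<r_0}\|\Delta_r u\|_{L^p(B')}$ times a constant depending on $r_0$, by a Minkowski/interpolation argument exploiting the summable scale factor $r^{1-\alpha}$... wait, more carefully: one integrates the chaining bound, or one simply notes $g\le \sum_j (2^{-j}r_0)^{2-\alpha}|\Delta_{2^{-j}r_0}u|$-type majorant is too crude; instead the genuinely correct route is to take $L^p$ norms inside the telescoping sum, giving $\|g\|_{L^p(B)}\lesssim \sum_{j}(2^{-j}r_0)^{1-\alpha}\sup_{r<r_0}\|\Delta_r u\|_{L^p(2B)}<\infty$ since $\sum_j 2^{-j(1-\alpha)}<\infty$ for $\alpha<1$. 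Finally boundedness of the Hardy–Littlewood maximal operator on $L^p(2B)$ (which holds by completeness and local doubling, after a Whitney/covering localization) gives $Mg\in L^p(B)$, completing the proof on $B$; covering $\Omega$ by such balls gives the local statement.

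**The main obstacle.** The delicate point is the second step: converting the pointwise bound $|u(x)-u_{B_r(x)}|\lesssim r^\alpha g(x)$, valid only at (Lebesgue) points $x$ with respect to the *$r$-dependent* family of balls centered at $x$, into a genuine two-point Hajłasz inequality $|u(x)-u(y)|\le d(x,y)^\alpha(h(x)+h(y))$. This requires comparing averages over balls with *different centers*, and one must either run a careful chain of intermediate balls (using the length-space-like or at least doubling structure to keep the number of balls and their overlap controlled — note the theorem only assumes proper locally doubling, not a length space, so the chaining has to be done purely via doubling) or invoke a known self-improvement lemma for "Poincaré-type" difference estimates. Keeping track of constants so that they depend only on the doubling constant and $r_0$, and ensuring the fractional gradient $g$ ends up in $L^p$ with the $\sum 2^{-j(1-\alpha)}$ summability (which is exactly why $\alpha$ must be $<1$, the endpoint $\alpha=1$ being genuinely excluded here), is where the real work lies; the rest is bookkeeping with the doubling property and the maximal function theorem.
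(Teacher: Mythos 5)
Your strategy is genuinely different from the paper's (which bootstraps from $M^{1/2,p}_{loc}$ upward), but as written it has a gap at precisely the step you flag as ``the main obstacle,'' and the remedies you suggest do not close it. The middle term $|u_{B_{2r}(x)}-u_{B_{2r}(y)}|$ with $r=d(x,y)$ is a difference of averages over balls with \emph{different centers at a scale comparable to their separation}. Chaining or symmetric-difference estimates only bound it by the mean oscillation of $u$ over $B_{Cr}(x)$, with \emph{no} power of $d(x,y)$ gained: without annular decay, $\mu(B_{2r}(x)\triangle B_{2r}(y))$ need not be small relative to $\mu(B_{2r}(x))$ even when $d(x,y)\ll r$, and at $d(x,y)\sim r$ it is comparable to the full ball in any case. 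Converting that oscillation into $d(x,y)^\alpha$ times an $L^p$ function is exactly the fractional sharp-maximal/Haj\l asz bound you are trying to prove, so the argument is circular. The paper escapes this by (i) replacing $A_r$ with the radius-averaged operator $A^ru=\frac{2}{r}\int_{r/2}^rA_tu\,dt$, which \emph{is} Lipschitz with constant $\frac{C}{r}\vint_{B_{3r}}|u-c|\,d\mu$ on merely doubling spaces (Lemma \ref{elementary} and Proposition \ref{lipaprox}), and (ii) comparing $x$ and $y$ through balls of radius $r=d(x,y)^{\theta}\gg d(x,y)$ with $\theta<1$, so that the Lipschitz bound contributes the factor $d(x,y)/r=d(x,y)^{1-\theta}$. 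This yields only $\alpha=1/2$ at first and must then be iterated (Proposition \ref{prop:improve}) to reach every $\alpha<1$; your single-pass scheme has no mechanism producing the factor $d(x,y)^\alpha$ on the oscillation term.

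A second gap concerns the gradient $g(x)=\sup_{0<r<r_0}r^{2-\alpha}|\Delta_ru(x)|$: this is a supremum over a continuum of scales, and $\|\sup_rf_r\|_{L^p}$ is not controlled by $\sup_r\|f_r\|_{L^p}$. Discretizing to dyadic scales repairs the norm bound but destroys the pointwise inequality $|u(x)-u_{B_r(x)}|=r^2|\Delta_ru(x)|\le r^\alpha g(x)$ for the non-dyadic $r=d(x,y)$ that the Haj\l asz inequality requires. The paper sidesteps this by working with $\int_0^r|\Delta_tu(x)|\,dt$ (which arises from $|u-A^ru|\le 2r\int_0^r|\Delta_tu|\,dt$) and Minkowski's or H\"older's inequality. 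Relatedly, your accounting of where $\alpha<1$ enters (the exponents $2-\alpha$ versus $1-\alpha$ in your two summability claims) is inconsistent; in the correct argument the restriction comes from the oscillation term only ever carrying the factor $d(x,y)^{1-\theta}$, never a full power of $d(x,y)$, which is why the paper's iteration converges to $1$ without reaching it.
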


Together with the Morrey embedding theorem for fractional Haj\l asz--Sobolev spaces (see Proposition \ref{prop:fracmorrey} and \cite[Corollary 1.4]{yangd}), Theorem \ref{thm:frachaj} implies Theorem \ref{thm:holder} and the stronger Corollary \ref{cor:holder}. Theorem \ref{thm:frachaj} is not quantitative, and gives no explicit bounds on the fractional Haj\l asz--Sobolev gradient in terms of the amv-norm. Its proof relies on a refined averaging technique and a bootstrapping argument, see Section \ref{sec:mv} and Proposition \ref{prop:improve}. Refined averaging yields a Lipschitz regularization and moreover leaves mv-harmonic functions (locally) invariant. This observation directly yields local Lipschitz continuity of mv-harmonic functions, improving Corollaries 5.1 and 5.2 in~\cite{agg}, where local Lipschitz regularity is obtained, respectively, for uniform measures and measures satisfying the annular decay condition and under the assumption that the underlying space supports the $(1,p)$-Poincar\'e inequality.

\begin{theorem}\label{thm:mvlip}
	Let $\Omega\subset X$ be an open subset of a complete locally doubling metric measure space $X=(X,d,\mu)$, and $u\in L^1_{loc}(\Omega)$ an mv-harmonic function on $\Omega$. Then $u$ is locally Lipschitz and satisfies the bound
	\begin{equation}\label{eq:mvlip}
		\LIP(u|_{B_r(x_0)})\le \frac{C}{r}\inf_{c\in\R}\vint_{B_{3r}(x_0)}|u-c|\ud\mu
	\end{equation}
	whenever $\bar B_{3r}(x_0)\subset \Omega$.
\end{theorem}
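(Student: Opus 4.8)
The plan is to exploit the fact that an mv-harmonic function $u$ is a fixed point of every averaging operator $A_r$ defined by $A_ru(x)=u_{B_r(x)}$, and — more importantly — of the \emph{refined} averaging operator introduced in Section \ref{sec:mv}. The key structural observation is that, since $u_{B_s(x)}=u(x)$ for every admissible $s$, iterating averages at different radii and taking finite differences produces, instead of error terms, exact identities. Concretely, for two nearby points $x,y$ with $d(x,y)$ small relative to $\dist(\{x,y\},\partial\Omega)$, I would write $u(x)-u(y)=u_{B_\rho(x)}-u_{B_\rho(y)}$ for a suitable $\rho$ comparable to $r$, and then estimate the right-hand side by comparing the two averaging balls. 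The overlap $B_\rho(x)\cap B_\rho(y)$ carries most of the mass of each ball when $d(x,y)\ll\rho$, by the doubling property, so the difference of averages is controlled by $\frac{1}{\mu(B_\rho)}\int_{B_\rho(x)\triangle B_\rho(y)}|u-c|\,\ud\mu$ for any constant $c$; the symmetric difference has measure $O(d(x,y)/\rho)\,\mu(B_\rho)$ by a standard annular-type estimate available in a \emph{length} space with local doubling (this is where lengthness enters — to guarantee $B_\rho(x)\setminus B_\rho(y)$ sits in a thin annulus, one needs geodesic-like connectivity, or at least the chaining that locally doubling length spaces provide). This already yields
\[
|u(x)-u(y)|\le \frac{C\,d(x,y)}{\rho}\,\vint_{B_{2\rho}(x)}|u-c|\,\ud\mu,
\]
i.e. local Lipschitz continuity with the asserted form of the bound after adjusting constants and absorbing $\rho\sim r$ and $B_{2\rho}\subset B_{3r}$.

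The order of steps I would follow: (1) fix $\bar B_{3r}(x_0)\subset\Omega$ and reduce to bounding $|u(x)-u(y)|$ for $x,y\in B_r(x_0)$ with $d(x,y)$ small; (2) record the identity $u(x)=u_{B_s(x)}$ for all $s\le \dist(x,\partial\Omega)$, which is the definition of mv-harmonicity (Definition \ref{def:mv}), so in particular it holds for $s$ of order $r$; (3) prove the annular-decay-type estimate $\mu(B_s(x)\triangle B_s(y))\le C\,\tfrac{d(x,y)}{s}\,\mu(B_s(x))$ using local doubling plus the length-space chaining — this is the technical heart; (4) combine to get the displayed Lipschitz bound with $c$ arbitrary, hence with the infimum over $c$; (5) iterate/chain over a geodesic (or quasi-geodesic) from $x$ to $y$ if $d(x,y)$ is not small compared to $r$, to upgrade the local estimate to one valid on all of $B_r(x_0)$, keeping the average over $B_{3r}(x_0)$ on the right by monotonicity of the averages and doubling.

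The main obstacle I expect is Step (3): getting the \emph{linear} (in $d(x,y)/s$) control on the measure of the symmetric difference of two balls. In a general doubling space balls need not satisfy any annular decay, and the symmetric difference could in principle carry a definite fraction of the mass even when $d(x,y)/s$ is tiny; the length-space hypothesis is precisely what rules this out, since it forces $B_s(x)\subset B_{s+d(x,y)}(y)$ with the ``new'' region $B_{s+d(x,y)}(y)\setminus B_{s-d(x,y)}(y)$ being a genuine thin annulus whose measure one bounds by writing $s+d(x,y)=s(1+d(x,y)/s)$ and applying the doubling inequality at the ratio $1+d(x,y)/s$, which contributes a factor $1+C\,d(x,y)/s$ rather than a multiplicative doubling constant — here one uses that $t\mapsto \mu(B_t(x))$ has at most polynomial growth so its increment over $[s,s(1+\epsilon)]$ is $O(\epsilon)$ times its value. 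Making this last point rigorous (bounding the increment of the volume function rather than just its doubling ratio) is the delicate calculation; once it is in hand, everything else is bookkeeping with the mv-harmonic identity and the triangle/chaining argument. A secondary subtlety is that the estimate must be made \emph{locally uniform} so that the Lipschitz constant genuinely depends only on $r$ and the stated average, which requires the doubling constant and the chaining length to be controlled uniformly on the compact set $\bar B_{3r}(x_0)$ — this follows from \emph{local} doubling together with compactness.
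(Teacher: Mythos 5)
There is a genuine gap, and it sits exactly where you predicted: Step (3). The estimate you need there, $\mu(B_s(x)\triangle B_s(y))\le C\,\tfrac{d(x,y)}{s}\,\mu(B_s(x))$, is precisely the \emph{strong} ($\alpha=1$) annular decay property, and it is not available under the hypotheses of the theorem. First, the statement assumes only a complete locally doubling space, not a length space, so the chaining/geodesic connectivity you invoke is an extra hypothesis. Second, even granting lengthness, Buckley's theorem (cited after Definition \ref{defn-an-dec}) only yields $\alpha$-annular decay for \emph{some} $\alpha\in(0,1]$ depending on the doubling constant, possibly strictly less than $1$; with $\alpha<1$ your argument gives $|u(x)-u(y)|\lesssim (d(x,y)/s)^\alpha$, i.e.\ $\alpha$-H\"older continuity, which is exactly \cite[Theorem 4.2]{agg} — the result this theorem is written to improve. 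Your proposed justification of the linear increment — that polynomial growth of $t\mapsto\mu(B_t(x))$ forces its increment over $[s,s(1+\epsilon)]$ to be $O(\epsilon)$ times its value — is false: the doubling inequality \eqref{eq:doublexp} bounds the \emph{ratio} $\mu(B_{s(1+\epsilon)})/\mu(B_s)$ by a constant times $(1+\epsilon)^Q$, where the constant need not be close to $1$, so a nondecreasing volume function satisfying all the doubling bounds can still jump by a definite fraction of its value across an arbitrarily thin annulus. That is exactly what failure of strong annular decay means, and no amount of care in the calculation will recover the linear bound.

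The paper's way around this is the one ingredient you mention in your opening paragraph but then never actually use: averaging over the \emph{radius}. One sets $u^r(x)=\tfrac 2r\int_{r/2}^r u_t(x)\,\ud t$ and proves (Lemma \ref{elementary}) that although a single annulus $A_{t-d,t+d}(x)$ may carry a lot of mass, the integral $\int_{r/2}^{r}\mu(A_{t-d,t+d}(x))\,\ud t$ is at most $2d$ times the measure of one fixed thick annulus — a telescoping/monotonicity identity for $g(t)=\mu(B_t(x))$ that is linear in $d$ and needs no annular decay and no length-space structure. This gives the Haj\l asz-type Lipschitz estimate \eqref{eq:hajtype} for $u^r$ for \emph{arbitrary} $u\in L^1_{loc}$ (Proposition \ref{lipaprox}); mv-harmonicity then enters only at the very end, via $u=u_t$ for all admissible $t$ and hence $u=u^r$, transferring the bound to $u$ itself. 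So the correct order is the reverse of yours: prove Lipschitz continuity of the radially averaged operator unconditionally, then use the mean value property to identify $u$ with its own refined average.
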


Concerning the "size" of the class of mv-harmonic functions, in Section \ref{sec:mv} we also discuss the dimension bounds on $\harm^m(X)$, the space of mv-harmonic functions with $m$-growth on space $X$. Thus, we obtain a counterpart of results known for Riemannian manifolds, Alexandrov and RCD-spaces (see Section~\ref{sec:polygrowth} for the details).

\begin{prop}\label{prop:polygrowth}
	Let $X=(X,d,\mu)$ be a complete doubling metric measure space with doubling exponent $Q:=\log_2C_\mu>1$, and suppose $\mu$ has $\alpha$-annular decay, cf.~\eqref{eq-def-an-dec}. Then, for any $m>0$, we have that 
	\begin{align*}
	\dim \harm^m(X)\le Cm^{Q-\alpha},
	\end{align*}
	where the constant $C=C(Q,\alpha)$ depends only on $Q$ and $\alpha$.
\end{prop}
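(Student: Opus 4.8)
The plan is to run the iteration scheme of Li \cite{li}, packaged in Lemmas \ref{lem:lemma1} and \ref{lem:lemma2}, and to feed into it the sharpened volume estimate for annuli coming from the $\alpha$-annular decay property \eqref{eq-def-an-dec}. As a preliminary observation, since $X$ is complete and doubling it is proper, so by Remark \ref{rmk:nosublin} every $u\in\harm(X)$ satisfies $u=u_{B_R(x)}$ for all $x\in X$ and all $R>0$; by Jensen's inequality $v:=|u|^2$ then satisfies the \emph{sub}-mean value property $v(x)\le v_{B_R(x)}$ at every scale. Together with volume doubling (as in \cite[Section 4]{agg}, cf.\ the argument behind \eqref{eq:ballharnack}), this yields the mean value inequality
\[
\sup_{B_R(p)}|u|^2\le C\,\vint_{B_{2R}(p)}|u|^2\ud\mu ,
\]
with $C$ depending only on $C_\mu$.

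Next, suppose $\dim\harm^m(X)\ge k$ and fix a $k$-dimensional subspace $V\subset\harm^m(X)$. For $\rho>0$, consider the symmetric bilinear forms $A_\rho(u,w):=\int_{B_\rho(p)}uw\ud\mu$ on $V$. By Li's linear algebra lemma (Lemma \ref{lem:lemma1}), for every $\beta>1$ and $R>0$ there is a basis $\{u_i\}_{i=1}^k$ of $V$ that is orthonormal for $A_R$ and diagonalizes $A_{\beta R}$, with eigenvalues $\lambda_i=A_{\beta R}(u_i,u_i)\ge 1$ satisfying $\prod_i\lambda_i=\det A_{\beta R}/\det A_R$; moreover $\sum_i\log\lambda_i$ is controlled by $\log\big(\int_{B_{\beta R}(p)}\sum_i u_i^2\ud\mu\big)$ by means of the mean value inequality above.

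I would then iterate over scales $R_j=\beta^jR_0$ and telescope $\log\det A_{R_{j+1}}-\log\det A_{R_j}$. Since each $\lambda_i\ge 1$ and a definite proportion of the basis functions are forced to grow, summing the increments gives a lower bound of order $k\log\beta$ after finitely many steps; on the other hand, the polynomial growth bound $|u_i(x)|\le C(1+d_p(x))^m$ together with the mean value inequality gives an upper bound of order $\log\mu(B_{R_l}(p))+2m\log R_l+C$ after $l$ steps. The annular decay enters precisely in comparing $\int_{B_{\beta R}(p)}u_i^2$ with $\int_{B_R(p)}u_i^2$: the annular contribution is bounded by $\mu\big(B_{\beta R}(p)\setminus B_R(p)\big)\,\sup_{B_{\beta R}(p)}u_i^2$, and applying \eqref{eq-def-an-dec} with $\ep=1-\beta^{-1}$ replaces the crude factor $\mu(B_{\beta R}(p))$ by $A(1-\beta^{-1})^\alpha\,\mu(B_{\beta R}(p))$. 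Balancing the two bounds, with $\beta$ (equivalently, the number of iterations) chosen as a function of $m$, the volume exponent $Q$ governing the growth of $\det A_\rho$ is improved to $Q-\alpha$, and one reads off $k\le Cm^{Q-\alpha}$ with $C=C(Q,\alpha)$; Lemma \ref{lem:lemma2} supplies this iteration/counting step, exactly as in \cite{li}.

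The main obstacle is exactly this last balancing: one has to propagate the gain $(1-\beta^{-1})^\alpha$ through the iteration so that it lowers the exponent by exactly $\alpha$ while the final constant still depends only on $Q$ and $\alpha$ — in particular not on $\beta$, which must be sent to $1$ at a rate coupled to $m$. The remaining ingredients (Lemmas \ref{lem:lemma1} and \ref{lem:lemma2}, the telescoping, the polynomial growth bound, the mean value inequality) are routine and follow \cite{li} essentially verbatim.
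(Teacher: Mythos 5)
You follow the same two–lemma scheme as the paper and correctly identify the mean value inequality, the role of the annular decay, and the choice $\beta-1\sim 1/m$; but the step you flag as the ``main obstacle'' --- how the $\alpha$-annular decay lowers the exponent from $Q$ to $Q-\alpha$ --- is precisely the content of Lemma \ref{lem:lemma2}, and the mechanism you sketch for it does not close. You bound the annular contribution by $\mu\bigl(B_{\beta R}(p)\setminus B_R(p)\bigr)\sup_{B_{\beta R}(p)}u_i^2$ for each $i$ separately. Two problems arise. First, for $q$ in that annulus the sub-mean value inequality can only be applied on balls $B_\rho(q)$ contained in a reference ball $B_{\beta' R}(p)$, so either $\beta'-\beta\sim\ep$ and doubling costs a factor $\ep^{-Q}$, or $\beta'$ is a definite multiple of $\beta$ and the scale-by-scale comparison is destroyed. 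Second, summing your per-$i$ bound gives $\sum_i A_{\beta R}(u_i,u_i)\le k+C\ep^{\alpha-Q}\sum_iA_{\beta' R}(u_i,u_i)$ with $\ep^{\alpha-Q}\gg 1$ and the larger-scale trace on the right, which is vacuous. The estimate that works normalizes the basis with respect to the \emph{outer} ball $A_{(1+\ep)R}$ (so that $\sup_{\sum a_i^2=1}\int_{B_{(1+\ep)R}(p)}|\sum a_iu_i|^2=1$), uses the codimension-one trick to replace $\sum_iu_i^2(q)$ by the square of a single unit direction, and then integrates the resulting weight over the whole inner ball; the annular decay enters through the layer-cake estimate
\[
\vint_{B_R(p)}\Bigl(1+\ep-\frac{d_p}{R}\Bigr)^{-Q}\ud\mu\le\frac{C}{\ep^{Q-\alpha}},
\]
which uses $\mu(A_{sR,R}(p))\le C(1-s)^\alpha\mu(B_R(p))$ for \emph{all} $s$ simultaneously, not only for the single annulus $B_{\beta R}(p)\setminus B_R(p)$. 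None of this is in your sketch, so the key quantitative step remains unproved.

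A secondary point: once Lemmas \ref{lem:lemma1} and \ref{lem:lemma2} are available, no telescoping over scales $R_j=\beta^jR_0$ is needed in the proof of the proposition itself --- the determinant iteration is internal to Lemma \ref{lem:lemma1}, which already delivers a single good scale $R$ (and it is Lemma \ref{lem:lemma1}, not Lemma \ref{lem:lemma2}, that is the iteration/counting step). At that scale, with $\ep=1/(2m)$ and an $A_{(1+\ep)R}$-orthonormal basis, the two lemmas combine into
\[
\frac{k}{(1+\ep)^{2m+Q+\delta}}\le\sum_{j=1}^kA_R(u_j,u_j)\le\frac{C(Q)}{\ep^{Q-\alpha}},
\]
and the balancing you defer is simply $(1+1/(2m))^{2m+Q+\delta}\le C(Q)$ together with $\ep^{-(Q-\alpha)}=(2m)^{Q-\alpha}$, giving $k\le Cm^{Q-\alpha}$ after letting $\delta\to 0$.
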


Though the mean value property (cf. Definition \ref{def:mv}) is very rare outside the Euclidean setting, we show that they arise as blow-ups of $\AMV^{p}$-functions. 

\begin{theorem}\label{thm:blowup}
	Let $\Omega\subset X$ be an open subset of a proper locally doubling length space $X=(X,d,\mu)$, and let $1<p<\infty$. Suppose $u\in M^{1,p}_{loc}(\Omega)\cap \AMV^p_{loc}(\Omega)$ and $(r_k)$ is a positive sequence converging to zero. Then for $\mu$-almost every $x\in \Omega$, any approximate tangent map $u_\infty:X_\infty\to \R$ at $x$, subordinate to a subsequence of $(r_k)$, is mv-harmonic.
\end{theorem}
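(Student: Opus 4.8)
The plan is to combine the blow-up / approximate tangent map machinery for Hajłasz--Sobolev functions with the quantitative bound coming from finite local amv-norm, and to show that in the limit the $r$-laplacians vanish at every scale, which is precisely the mean value property. First I would fix a good point $x\in\Omega$: by the differentiation theory of Radon measures and the Lebesgue point theory for Hajłasz gradients, at $\mu$-a.e. $x$ the doubling constant localizes, $x$ is a Lebesgue point of $u$ and of a Hajłasz upper gradient $g\in L^p_{loc}$, and the rescaled measures $\mu_{r}:=\mu(B_r(x))^{-1}(\iota_{x,r})_\#\mu$ on the rescaled pointed spaces $(X, d/r, x)$ subconverge in the pointed measured Gromov--Hausdorff sense (using properness and local doubling) to some $(X_\infty, d_\infty, \mu_\infty, x_\infty)$; passing to a further subsequence of $(r_k)$ the normalized functions $u_{x,r_k}(y):=(u(y)-u(x))/r_k$ (or the analogous normalization making them bounded in the rescaled Sobolev norm) converge locally uniformly, after the identification of the approximating spaces, to the approximate tangent map $u_\infty$. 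That $u_\infty$ is well-defined and Hajłasz--Sobolev on $X_\infty$ is the content of the blow-up theory assumed available for $M^{1,p}$; I would quote it.

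Next comes the heart of the argument: transporting the amv-control to the limit. The key observation is the scaling identity $\Delta_\rho^{\mu_{r}} u_{x,r}(y) = r\,\Delta_{\rho r}^{\mu} u(\iota_{x,r}^{-1}y)$ up to the chosen normalization — i.e. the $\rho$-laplacian of the blow-up at scale $\rho$ is, after rescaling, the $(\rho r)$-laplacian of $u$ at a comparable point. Fixing $\rho>0$ and a compactly supported Lipschitz test function $\psi$ on $X_\infty$, I would write
\[
\int_{X_\infty}\psi\,\Delta_\rho^{\mu_\infty} u_\infty\,\ud\mu_\infty
=\lim_{k\to\infty}\int_{X}\psi_k\,\Delta_\rho^{\mu_{r_k}} u_{x,r_k}\,\ud\mu_{r_k},
\]
where $\psi_k$ are transplanted test functions; the averaging operator $u\mapsto u_{B_\rho(\cdot)}$ is continuous under pmGH convergence together with local uniform convergence of the functions and weak convergence of measures, so the limit passes inside. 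Then, by the scaling identity, the right-hand integral equals (up to the normalization factor) $\int_X \psi_k\,r_k^{\,?}\,\Delta_{\rho r_k}^{\mu}u\,\ud\mu_{r_k}$, and estimating it by Hölder's inequality against $\|\Delta_{\rho r_k}u\|_{L^p(K)}$ on a fixed compact neighborhood $K$ of $x$, the finite local amv-norm gives $\limsup_k \|\Delta_{\rho r_k}u\|_{L^p(K)}<\infty$; because $\rho r_k\to 0$, combined with the extra power of $r_k$ produced by the rescaling of the measure and the normalization, the whole expression tends to $0$. Hence $\int_{X_\infty}\psi\,\Delta_\rho^{\mu_\infty}u_\infty\,\ud\mu_\infty=0$ for every such $\psi$ and every fixed $\rho>0$, so $\Delta_\rho^{\mu_\infty}u_\infty\equiv 0$ $\mu_\infty$-a.e.; since $u_\infty$ is continuous (being Hajłasz--Sobolev on a doubling length space and, in fact, a limit of uniformly converging continuous functions) this holds everywhere, i.e. $u_\infty$ is mv-harmonic on $X_\infty$.

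The main obstacle, and the step requiring the most care, is making the scaling/transplantation bookkeeping rigorous: one has to track exactly how $\Delta_r$ transforms under the simultaneous rescaling of the metric (by $1/r$), the measure (by $\mu(B_r(x))^{-1}$), and the function (by $1/r$), and then verify that the resulting power of $r_k$ multiplying the uniformly bounded quantity $\|\Delta_{\rho r_k}u\|_{L^p(K)}$ is strictly positive — this is where the length-space and local-doubling hypotheses are used, since they control $\mu(B_{\rho r_k}(x))/\mu(B_{r_k}(x))$ from above and below by a power of $\rho$ uniformly in $k$, and guarantee the limit space is again a length space so that $\Delta_\rho^{\mu_\infty}u_\infty=0$ for all $\rho$ genuinely encodes the global mean value property. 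A secondary technical point is justifying the exchange of limit and integral: this needs the ball-average operator at a fixed scale $\rho$ to be continuous with respect to the joint convergence of $(X,d/r_k,\mu_{r_k},u_{x,r_k})$, which follows from the weak convergence of the measures, the convergence of the radii of balls, and the uniform local convergence of the functions — a standard but slightly delicate pmGH compactness argument that I would state as a lemma and prove by a covering/equicontinuity estimate using the finite amv-norm to get the needed equicontinuity of $u_{x,r_k}$, or alternatively import it from the $M^{1,p}$ blow-up framework.
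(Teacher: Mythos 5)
Your outline follows the paper's strategy quite closely (the scaling identity $\Delta_\rho^{\mu_{r_k}}u_{x,r_k}=r_k\,\Delta_{\rho r_k}^{\mu}u$, passage to the limit via stability of the averaging operators under pmGH convergence, and the conclusion $A_\rho^{\mu_\infty}u_\infty=u_\infty$ for all $\rho$), but the crucial quantitative step is wrong as stated. You estimate $\int\psi_k\,r_k\,\Delta_{\rho r_k}u\,\ud\mu_{r_k}$ by H\"older against $\|\Delta_{\rho r_k}u\|_{L^p(K)}$ on a \emph{fixed} compact neighbourhood $K$ of $x$. Since $\mu_{r_k}=\mu/\mu(B_{r_k}(x))$ and $\spt\psi_k\subset B_{Nr_k}(x)$, this yields a bound of the form
\[
r_k\,\frac{\mu(B_{Nr_k}(x))^{1-1/p}}{\mu(B_{r_k}(x))}\,\|\Delta_{\rho r_k}u\|_{L^p(K)}\ \approx\ r_k\,\mu(B_{r_k}(x))^{-1/p}\,\|\Delta_{\rho r_k}u\|_{L^p(K)},
\]
and $\mu(B_{r_k}(x))^{-1/p}$ blows up like $r_k^{-Q/p}$; the prefactor tends to $0$ only when $p>Q$, not for general $1<p<\infty$. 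What is actually needed is that the \emph{normalized average over the shrinking ball}, $\vint_{B_{Nr_k}(x)}|\Delta_{\rho r_k}u|\,\ud\mu$, stays bounded along a subsequence. This is a genuinely pointwise statement that fails to follow from the amv-norm by H\"older alone; the paper obtains it for $\mu$-a.e.\ $x$ by a Fatou/Lebesgue-differentiation argument: $\int_K\liminf_m\vint_{B(x,Nr_m)}|\Delta_{\rho r_m}u|\,\ud\mu\,\ud\mu(x)\le C\limsup_{s\to0}\int_K|\Delta_s u|\,\ud\mu<\infty$ using the $L^1$-boundedness of $A_{Nr_m}$, and then a diagonal selection over $N\in\N$, $\rho\in\Ratio_+$. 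This selection of good points is precisely where the ``for $\mu$-almost every $x$'' in the statement comes from, and it is the missing idea in your write-up.

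A secondary inaccuracy: the rescalings $u_{x,r_k}$ of an $M^{1,p}$ function do \emph{not} converge locally uniformly in general, and finite amv-norm does not give equicontinuity unless $p>Q$; the convergence to the approximate tangent is only the weak convergence of Definition \ref{def:wfGH}, and the limit is Lipschitz because the blow-up construction (Proposition \ref{prop:hajtan}) replaces $u$ by a Lipschitz truncation off a set of small density. Consequently the interchange of limit and integral must be justified for weakly converging functions — this is exactly what Proposition \ref{prop:intconv} and Corollary \ref{cor:rdeltaconv} provide — rather than by the uniform-convergence argument you sketch. This part is repairable by citing the right stability lemma, but the H\"older estimate above is a real gap that invalidates the proof for $p\le Q$.
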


This can be seen as an infinitesimal connection between amv- and mv-harmonicity. In light of the fact that in rectifiable spaces blow-ups are linear (and thus satisfy the mean value property automatically), Theorem \ref{thm:blowup} may serve as an obstruction to having many amv-harmonic functions on metric spaces that are too irregular.

\subsection{Amv-harmonicity in weighted Euclidean domains} As a toy model, we consider weighted Euclidean domains. Let $\|\cdot\|$ be a norm on $\R^n$ and $\Omega\subset \R^n$ a domain. We denote by $\Ha^n$ the Hausdorff $n$-measure, taken with respect to the metric induced by $\|\cdot\|$, restricted to $\Omega$. By a \emph{weight} on $\Omega$ we mean a strictly positive locally Lipschitz function $w:\Omega\to (0,\infty)$. In what follows, we consider the corresponding weighted Euclidean domain $\Omega_w:=(\Omega,\|\cdot\|, w\ud\Ha^n)$.

The $r$-laplacian $\Delta_r^w$ on $\Omega_w$ may fail to be asymptotically self-adjoint. This is explained more precisely further in this article, see Section \ref{sec-dbl} and Section \ref{sec:eucl}. Nevertheless, weak amv-harmonicity in $\Omega_w$ can be characterized by an elliptic PDE. Consider the unbounded operator
\begin{equation}\label{w-div-oper}
	L_wu:=\frac{1}{2}\div(M\nabla u)+\frac{1}{w}\langle \nabla w,M\nabla u\rangle,
\end{equation}
where $\langle \cdot,\cdot\rangle$ is the standard Euclidean inner product, and $M\in\R^{n\times n}$ denotes the positive definite \emph{matrix of second moments} of the unit ball $B^n\subset \R^n$ of $\|\cdot\|$ given by
\[
m_{ij}:=\vint_{B^n}y_iy_j\ud\Ha^n(y),\quad 1\le i,j\le n,
\]
see \cite[Sections 4.1-4.3]{ak}.

\begin{theorem}\label{thm:weight}
	Let $u\in W^{1,2}_{loc}(\Omega)$ for a domain $\Om\subset \R^n$. Then the following are equivalent.
	\begin{itemize}
		\item[(1)] $L_wu=0$ in $\Omega$ in the weak sense.
		\item[(2)] $u$ is weakly amv-harmonic in $\Omega_w$. 
		\item[(3)] $u\in AMV_{loc}^2(\Omega_w)$ and $\Delta_r^wu\to 0$ in $L^2_{loc}(\Omega_w)$ as $r\to 0$.
	\end{itemize}
	Moreover, if in addition $w\in C^\infty(\Omega)$, then (1), (2) and (3) are equivalent to
	\begin{itemize}
		\item[(4)] $u$ is strongly amv-harmonic in $\Omega_w$.
	\end{itemize}
\end{theorem}
We refer to~\cite{mi-te} and \cite{bos} for some related results. The non-trivial implications in Theorem \ref{thm:weight} boil down to regularity theory of elliptic PDEs, and a characterization of ${\rm AMV}_{loc}^2(\Omega_w)$ as a second order Sobolev space, see Theorem \ref{thm:weight2} in Section \ref{sec:eucl}.

In view of classical existence theory, weakly and strongly amv-harmonic functions are abundant under the hypotheses of Theorem \ref{thm:weight} and their further studies in the weighted setting could be interesting per se. In contrast, mean value harmonic functions (with $\Omega=\R^n$) satisfy an overdetermined system of PDEs --  in particular, if $\|\cdot\|$ is an $l^p$-norm ($p\ne 2$), mv-harmonic functions on (unweighted) $\R^2$ form a finite dimensional space; see \cite{ak}.

\section{Preliminaries}

\subsection{Notation and conventions}\label{subs-notation}
Throughout this article, a metric measure space $X=(X,d,\mu)$ is a separable metric space $(X,d)$ equipped with a Radon measure $\mu$ that is finite and non-trivial on balls, i.e. $0<\mu(B)<\infty$ for all balls $B\subset X$.

Given a subset $F\subset X$ of a metric space and $r>0$, we denote
\[
N_r(F)=\{ x\in X:\ \dist(x,F)<r\} \textrm{ and }\overline N_r(F)=\{x\in X:\ \dist(x,F)\le r\}
\]
the open and closed $r$-neighbourhood of $F$ (note that $\overline N_r(F)$ need not be the closure of $N_r(F)$ unless $X$ is a length space). For $x\in X$, we denote by $B_r(x):=N_r(\{x\})$ and $\overline B_r(x):=\overline N_r(\{x\})$,  respectively, an open and closed ball centered at $x$ with radius $r$.

The Lipschitz constant of a map $f:(X, d_X) \to (Y, d_Y)$ between metric spaces is
\begin{equation*}
\LIP(f):=\sup_{x\ne y}\frac{d_Y(f(x),f(x))}{d_X(x,y)},
\end{equation*}
and the pointwise Lipschitz slope is
\begin{align*}
\Lip f(x):= \limsup_{r\to 0}\sup_{0<d(y,x)<r}\frac{d_Y(f(x),f(y))}{r},\quad x\in X.
\end{align*}
If $x\in X$ is an isolated point, then $\Lip f(x)=0$.

Given $u\in L_{loc}^1(X)$ and a $\mu$-measurable set $A\subset X$ with $\mu(A)>0$, 
\[
u_{A}=\vint_A u\ud\mu=\frac{1}{\mu(A)}\int_Au\ud\mu
\]
denotes the  average of $u$ over $A$.

\subsection{Doubling measures and averaging operators}\label{sec-dbl}

A measure $\mu$ on a separable metric space $X$ is called \emph{locally doubling} if, for every compact $K\subset X$, there exists $r_K>0$ and a constant $C_K>0$, such that 
\begin{equation}\label{eq:doubl}
\mu(B_{2r}(x))\le C_K\mu(B_r(x))
\end{equation}
for every $x\in K$ and $0<r\le r_K$. If $\mu$ is locally doubling, then for every compact $K\subset X$ there exists a constant $C>0$ for which 
\begin{equation}\label{eq:doublexp}
\frac{\mu(B_{r}(y))}{\mu(B_R(x))}\ge C\left(\frac{r}{R}\right)^Q,\quad x\in K,\ y\in B_R(x),\ 0<r\le R\le r_K,
\end{equation}
where $Q=\log_2C_K$. If the constant $C_K$ can be chosen independently of the set $K\subset X$, and $r_K=\infty$, then we say that $\mu$ is \emph{doubling}, denote $C_K=C_\mu$ and the number $Q=\log_2C_\mu$ is called the \emph{doubling exponent} of $\mu$.

The following definition is due to Buckley, see~\cite[Section 1]{buc}, and is stronger than the doubling condition.

\begin{definition}\label{defn-an-dec}
 Let $(X, d, \mu)$ be a metric measure space with a doubling measure $\mu$. We say that $X$ satisfies the \emph{$\alpha$-annular decay property} with some $\alpha\in(0,1]$ if there exists $A\geq 1$ such that for all $x\in X$, $r>0$ and $\ep\in(0,1)$ it holds that
 \begin{equation}\label{eq-def-an-dec}
  \mu\left(B(x,r)\setminus B(x,r(1-\ep))\right)\leq A \ep^\alpha \mu(B(x,r)).
 \end{equation}
If $\alpha=1$, then we say that $X$ satisfies the \emph{strong annular decay property}.
\end{definition}

Examples of spaces with strong annular decay property include geodesic metric spaces with uniform measures and Heisenberg groups $\mathbb{H}^n$ equipped with a left-invariant Haar measures. By \cite[Corollary 2.2]{buc}, a length space with a doubling measure has the $\alpha$-annular decay property for some $\alpha\in(0,1]$ with $\alpha$ depending only on a doubling constant of the measure. (In fact, it is enough for a metric measure space to be the so-called $(\alpha, \beta)$-chain space to conclude that the space has the $\delta$-annular decay property, see Theorem 2.1 in \cite{buc}).

Let $(X,d,\mu)$ be a metric measure space and $r>0$. Given a locally integrable function $u\in L^1_{loc}(X)$, we denote by
\[
A_r^\mu u(x)=A_r u(x)=\vint_{B_r(x)}u\ud\mu,\quad x\in X,
\]
the $r$-average function of $u$. Note that $A_ru(x)=u_{B_r(x)}$.
 
We will use the two notations interchangeably, depending on whether we want to view the average as a number, or an operator on a function space. Indeed, the function $A_ru:X\to\R$ is measurable, and $A_r$ defines a bounded linear operator $A_r:L^1(X)\to L^1(X)$ if and only if $a_r\in L^\infty(X)$, where
\[
a_r(x)=\int_{B_r(x)}\frac{\ud\mu(y)}{\mu(B_r(y))},\quad x\in X.
\]
Moreover, in this case the operator norm satisfies $\|A_r\|_{L^1\to L^1}=\|a_r\|_{L^\infty}$, see~\cite[Theorem 3.3]{Aldaz18}. This is true in particular when $\mu$ is a doubling measure.  On the other hand, it is true that by the Lebesgue differentiation theorem
\begin{align*}
u(x)=\lim_{r\to 0}A_ru(x)\,\textrm{ for almost every }x\in X,
\end{align*}
if $\mu$ is infinitesimally doubling, cf.~\cite[Remark 3.4.29]{HKST}.

If $X$ is doubling as a metric space, then there exists $C>0$ so that $\|A_r\|_{L^p\to L^p}\le C$ for every $r>0$ and every $1\le p<\infty$, cf.~\cite[Theorem 3.5]{Aldaz18}. However, $A_r$ is not a self-adjoint operator; the formal adjoint $A_r^*$ of $A_r$  is given by
\begin{equation*}
(A_r^\mu)^*u(x)=A_r^*u(x):=\int_{B_r(x)}\frac{u(y)\ud\mu(y)}{\mu(B_r(y))},\quad x\in X,
\end{equation*}
for $u\in L^1_{loc}(X)$. Indeed, a direct computation using the Fubini theorem yields that
\begin{equation}\label{eq:alaGreen}
\int_X vA_ru\ud\mu=\int_XuA_r^*v\ud\mu,\quad u\in L^p(X),\ v\in L^q(X),
\end{equation}
where $1/p+1/q=1$.

We may express the $r$-laplacian using the averaging operator as
\begin{align*}
\Delta_ru=\frac{A_ru-u}{r^2},\quad u\in L^1_{loc}(X).
\end{align*}
Next, we denote by
\begin{equation}\label{eq:adjointlaplace}
\Delta_r^*u:=\frac{A_r^*u-u}{r^2},\quad  u\in L^1_{loc}(X),
\end{equation}
the formal adjoint of the $r$-laplacian. Note that if $A_r:L^p(X)\to L^p(X)$ is bounded, then $\Delta_r:L^p(X)\to L^p(X)$ and $\Delta_r^*:L^q(X)\to L^q(X)$ are both bounded, where $1/p+1/q=1$.

\begin{remark}\label{rmk: Green}
 Identity \eqref{eq:alaGreen} together with~\eqref{eq:adjointlaplace} imply the following Green-type formula for  $u\in L^p(X),\ v\in L^q(X)$ where $1/p+1/q=1$:
 \[
  \int_X v \Delta_ru\ud\mu=\int_X u \Delta_r^*v\ud\mu.
 \]
The asymptotic behaviour of $\Delta_r-\Delta_r^*$ as $r\to 0$ plays an important role in~\cite{aks2}. 
\end{remark}

\begin{remark}\label{rmk:domain}
While most of our results will be formulated for metric measure spaces, they encompass the case of an open set $\Omega \subset X$ in the introduction. Indeed, an open subset $\Omega\subset X$ of a metric measure space can be regarded as a metric measure space $\Omega=(\Omega,d|_\Omega,\mu|_\Omega)$. In particular, \emph{if $X$ is locally doubling, then $\Omega$ is locally doubling.}
\end{remark}

\section{Refined averaging and mean value harmonic functions}\label{sec:mv}

In this section we consider mean value harmonic functions. In particular, we show their local Lipschitz regularity assuming merely the doubling property of the underlying measure, see Theorem \ref{thm:mvlip}. We also prove a dimension bound on the space of mv-harmonic functions with polynomial growth in the spirit of the celebrated results of Colding--Minicozzi~\cite{cm, cm2} confirming Yau's conjecture, see Proposition~\ref{prop:polygrowth}. Our approach emphasizes the role of the averaging operators. 

\begin{definition}\label{def:mv}
	Let $X=(X,d,\mu)$ be a metric measure space. We say that a function $u\in L^1_{loc}(X)$ is \emph{mean value harmonic} (or has the \emph{mean value property}) if 
	\[
	\Delta_ru=0\textrm{ on }K
	\]
	for any compact set $K\subset X$ and $r<r_K:=\sup\{\rho>0:\ \overline N_r(K)\textrm{ is compact} \}$.
\end{definition}
If $X$ is a complete doubling metric measure space with doubling exponent $Q$, and $\Omega\subset X$ a domain, we denote by $\Ha(\Omega)$ the space of mv-harmonic functions on $\Omega$. Note that, if $u\in \Ha(\Omega)$, then
\begin{align}\label{eq:mvprop}
u(x)=u_{B_r(x)},\quad x\in \Omega,\ r<\dist(x,X\setminus \Omega).
\end{align}
The mean value property \eqref{eq:mvprop} has strong implications. In particular, a Harnack inequality holds on balls: if $u\in \harm(\Omega)$ is non-negative and $\overline B_{4r}(x_0)\subset\Omega$, then
\begin{equation}\label{eq:ballharnack}
\sup_{B_r(x_0)}u\le C\inf_{B_r(x_0)}u,
\end{equation}
where the constant depends only on the doubling constant of $\mu$, see~\cite[Section 4]{agg}. A standard argument then gives the Harnack inequality for non-negative mv-harmonic functions, cf. \cite[Proposition 4.2]{agg}. We refer to~\cite{agg} for further properties of mv-harmonic functions in metric measure spaces. In the next section we see that the mean value property \eqref{eq:mvprop} also yields more regularity than one might initially expect.

\subsection{Local Lipschitz continuity of mean value-harmonic functions}\label{subs:loc-lip}
If a measure $\mu$ has the $\alpha$-annular decay property (see Definition~\ref{defn-an-dec}), then \cite[Theorem 4.2]{agg} shows that mv-harmonic functions are $\alpha$-H\"older continuous. In this section we will prove that in fact, mv-harmonic functions are \emph{Lipschitz} continuous even when the measure does not satisfy the annular decay condition. The idea of the proof is to consider a refined averaging process, wherein we average over the radius as well as the space variable. Throughout the subsection we use the abbreviation
\[
u_r:=A_ru
\]
for $u\in L^1_{loc}(X)$ and $r>0$. Given such a function we define
\begin{align}\label{eq:refav}
A^ru(x)=u^r(x):=\frac 2r\int_{r/2}^ru_t(x)\ud t,\quad x\in X.
\end{align}

According to our best knowledge this type of the refined averaging operator has not yet been considered in the literature. The following elementary lemma will play a crucial role in the estimates proving that $A^ru$ is locally Lipschitz. We use the notation 
\[
A_{r,R}(x)=\bar B_R(x)\setminus B_r(x)
\]
for $x\in X$ and $r\le R$, with the convention that $B_r(x)=\varnothing$ for $r\le 0$ and $A_{r,R}(x)=\varnothing$ if $r>R$.

\begin{lemma}\label{elementary}
	Let $f\in L^1_{loc}(X)$ be a nonnegative function and $x\in X$. Let $0\le r\le R<\infty$, and $-\infty<d_1\le d_2<\infty$. 
	Then
	\[
	\int_r^R\int_{A_{t+d_1,t+d_2}(x)}f\ud\mu\ \ud t\le (d_2-d_1)\int_{A_{r+d_1,R+d_2}(x)}f\ud\mu.
	\]	
\end{lemma}

\begin{proof}
	Let us consider two cases. If $R+d_1\le r+d_2$, then $R-r\le d_2-d_1$ and we have the trivial estimate
	\[
	\int_r^R\int_{A_{t+d_1,t+d_2}(x)}f\ud\mu\ \ud t\le \int_r^R\int_{A_{r+d_1,R+d_2}(x)}f\ud\mu\ud t\le (d_2-d_1)\int_{A_{r+d_1,R+d_2}(x)}f\ud \mu.
	\]
	Next, assume that $R+d_1>r+d_2$ and define an auxiliary function $g:\R\to \R$ as follows: $g(t)=0$ for $t\le 0$ and 
	\[
	g(t)=\int_{B_t(x)}f\ud\mu,\quad t\ge 0.
	\]
	Then $g$ is a nondecreasing function and it holds that 
	\begin{align*}
	&\int_r^R\int_{A_{t+d_1,t+d_2}(x)}f\ud\mu\ \ud t=\int_r^R[g(t+d_2)-g(t+d_1)]\ud t=\int_{r+d_2}^{R+d_2}g\ud t-\int_{r+d_1}^{R+d_1}g\ud t\\
	=&\int_{R+d_1}^{R+d_2}g\ud t-\int_{r+d_1}^{r+d_2}g\ud t\le (d_2-d_1)g(R+d_2)-(d_2-d_1)g(r+d_1)\\
	=&(d_2-d_1)\int_{A_{r+d_1,R+d_2}(x)}f\ud \mu.
	\end{align*}
\end{proof}

\begin{prop}\label{lipaprox}
	Let $X=(X,d,\mu)$ be a locally doubling metric measure space, $u\in L^1_{loc}(X)$. Let $K\subset X$ be compact and $r_K>0$ such that \eqref{eq:doubl} holds for $r<r_K$. Then, for any $r<r_K$, the function $u^r$ is Lipschitz on $K$ and, for any $c\in \R$, satisfies the Haj\l asz type estimate
	\begin{align}\label{eq:hajtype}
	|u^r(x)-u^r(y)|\le \frac{Cd(x,y)}{r}\left(\vint_{B_{2r}(x)}|u-c|\ud\mu+\vint_{B_{2r}(y)}|u-c|\ud\mu\right),\quad x,y\in K,\ d(x,y)<r,
	\end{align}
	where the constant $C$ depends only on the doubling constant of $\mu$ on $K$. In particular
	\begin{align*}
	\LIP(u^r|_{B_r(x_0)})\le \frac{C}{r}\vint_{B_{3r}(x_0)}|u-c|\ud\mu
	\end{align*}
	whenever $\bar B_{3r}(x_0)\subset X$ is compact.
\end{prop}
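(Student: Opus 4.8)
The plan is to reduce the estimate to an inequality for $v:=u-c$ and then exploit the extra radial averaging built into \eqref{eq:refav} via Lemma \ref{elementary}. Since $A_t$ fixes constants, so does $A^r$; hence $u^r(x)-u^r(y)=v^r(x)-v^r(y)$ and the right-hand side of \eqref{eq:hajtype} is unaffected by replacing $u$ with $v$, so it suffices to bound $|v^r(x)-v^r(y)|$ by averages of $|v|$ over $B_{2r}(x)$ and $B_{2r}(y)$. Writing $s:=d(x,y)$, I would start from the elementary identity
\[
v_t(x)-v_t(y)=v_t(x)\,\frac{\mu(B_t(y))-\mu(B_t(x))}{\mu(B_t(y))}+\frac{1}{\mu(B_t(y))}\left(\int_{B_t(x)}v\,\ud\mu-\int_{B_t(y)}v\,\ud\mu\right),
\]
and combine it with the inclusions $B_{t-s}(x)\subseteq B_t(y)\subseteq B_{t+s}(x)$ (which bound $|\mu(B_t(y))-\mu(B_t(x))|$ by $\mu(A_{t-s,t+s}(x))$) and the fact that $B_t(x)\setminus B_t(y)$ and $B_t(y)\setminus B_t(x)$ both lie in $A_{t-s,t+s}(x)$, to obtain the pointwise bound
\begin{equation}\label{eq:plan1}
|v_t(x)-v_t(y)|\le\frac{1}{\mu(B_t(y))}\left(\mu(A_{t-s,t+s}(x))\vint_{B_t(x)}|v|\,\ud\mu+\int_{A_{t-s,t+s}(x)}|v|\,\ud\mu\right),\quad t>0.
\end{equation}

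Next I would integrate \eqref{eq:plan1} against $\tfrac2r\,\ud t$ over $[r/2,r]$, distinguishing two regimes. If $d(x,y)\le r/4$, then $t-s\ge r/4>0$ for all $t\in[r/2,r]$, so — since $r<r_K$ — the doubling property makes $\mu(B_{t-s}(x)),\mu(B_t(x)),\mu(B_t(y))$ and $\mu(B_{t+s}(x))$ all comparable to $\mu(B_{2r}(x))$ with constants depending only on $C_K$; feeding these comparisons into \eqref{eq:plan1} and then applying Lemma \ref{elementary} with $d_1=-s$, $d_2=s$ to the nonnegative functions $1$ and $|v|$ yields
\[
\int_{r/2}^r\mu(A_{t-s,t+s}(x))\,\ud t\le 2s\,\mu(B_{2r}(x)),\qquad \int_{r/2}^r\!\!\int_{A_{t-s,t+s}(x)}|v|\,\ud\mu\,\ud t\le 2s\int_{B_{2r}(x)}|v|\,\ud\mu,
\]
and hence, after multiplying by $2/r$, $|v^r(x)-v^r(y)|\le\tfrac{C}{r}\,d(x,y)\,\vint_{B_{2r}(x)}|v|\,\ud\mu$ (it is here that the radial averaging produces the factor $d(x,y)/r$). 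In the complementary regime $r/4<d(x,y)<r$, where $t-s$ need not be positive, I would instead use the crude bound $|v^r(x)-v^r(y)|\le|v^r(x)|+|v^r(y)|$ together with $|v^r(z)|\le\tfrac2r\int_{r/2}^r\vint_{B_t(z)}|v|\,\ud t\le C\vint_{B_{2r}(z)}|v|\,\ud\mu$ (valid since $B_{r/2}(z)\subseteq B_t(z)\subseteq B_{2r}(z)$ for $t\in[r/2,r]$ and $\mu$ is doubling), and absorb the constant into $d(x,y)/r$ using $1<4s/r$. Combining the two regimes proves \eqref{eq:hajtype} with $C=C(C_K)$.

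Local Lipschitz continuity of $u^r$ on $K$ then follows from \eqref{eq:hajtype}, points at distance $\ge r$ being handled by $|u^r(x)-u^r(y)|\le|u^r(x)|+|u^r(y)|\le\tfrac{d(x,y)}{r}(|u^r(x)|+|u^r(y)|)$ and the local integrability of $u$. For the bound on $\LIP(u^r|_{B_r(x_0)})$, note that for $x,y\in B_r(x_0)$ one has $B_{2r}(x),B_{2r}(y)\subseteq B_{3r}(x_0)$, and when $\bar B_{3r}(x_0)$ is compact the (local) doubling property on a neighbourhood of it makes $\mu(B_{2r}(x))$ and $\mu(B_{2r}(y))$ comparable to $\mu(B_{3r}(x_0))$; applying \eqref{eq:hajtype} to pairs with $d(x,y)<r$ and the regime-B estimate to pairs with $r\le d(x,y)<2r$ (where $1\le d(x,y)/r$) gives $|u^r(x)-u^r(y)|\le\tfrac{C}{r}\big(\vint_{B_{3r}(x_0)}|u-c|\,\ud\mu\big)\,d(x,y)$ for all $x,y\in B_r(x_0)$, which is the claimed estimate.

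The crux — and the step I expect to require the most care — is the passage from the pointwise-in-$t$ bound \eqref{eq:plan1}, whose right-hand side features the mass $\mu(A_{t-s,t+s}(x))$ of a thin annulus that \emph{cannot} be estimated satisfactorily pointwise without an annular-decay hypothesis, to the integrated bound: Lemma \ref{elementary} is precisely what turns $\int_{r/2}^r\mu(A_{t-s,t+s}(x))\,\ud t$ into $2s\,\mu(B_{2r}(x))$, trading the integration in the radial variable for the decisive gain of a factor $d(x,y)/r$. The rest — the algebraic identity, the symmetric-difference inclusions, and keeping track of which doubling comparisons are invoked at which radii — is routine bookkeeping.
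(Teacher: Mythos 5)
Your proof is correct and follows essentially the same route as the paper's: the same symmetric-difference decomposition of $u_t(x)-u_t(y)$, followed by Lemma \ref{elementary} with $d_1=-d(x,y)$, $d_2=d(x,y)$ to convert the integrated annulus masses into the factor $d(x,y)\,\mu(B_{2r}(x))$ (resp.\ $d(x,y)\int_{B_{2r}(x)}|u-c|\,\ud\mu$). The only cosmetic difference is your case split at $d(x,y)=r/4$, which the paper avoids because Lemma \ref{elementary} and the convention $B_\rho(x)=\varnothing$ for $\rho\le 0$ already handle degenerate annuli, and because the denominators are compared to $\mu(B_r(x))$ using only $t\ge r/2$, not positivity of $t-d(x,y)$.
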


\begin{proof}[Proof of Proposition \ref{lipaprox}]
	Let us first prove the Lipschitz estimate assuming \eqref{eq:hajtype}. Let $K=B_r(x_0)$, $c\in R$ and assume that $x,y\in B_{r}(x_0)$ with $d(x,y)<r$. Then \eqref{eq:hajtype} together with the doubling condition directly yield
	\begin{align*}
	|u(x)-u(y)|\le \frac{Cd(x,y)}{r}\left(\vint_{B_{2r}(x)}|u-c|\ud\mu+\vint_{B_{2r}(y)}|u-c|\ud\mu\right)  \le \frac{Cd(x,y)}{r}\vint_{B_{3r}(x_0)}|u-c|\ud\mu.
	\end{align*}
	If $d(x,y)\ge r$, then $d(x,x_0)+d(y,x_0)\le 2d(x,y)$, and therefore
	\begin{align*}
	|u^r(x)-u^r(y)|\le &|u^r(x)-u^r(x_0)|+|u^r(y)-u^r(x_0)|\le \frac{C(d(x,x_0)+d(y,x_0))}{r}\vint_{B_{3r}(x_0)}|u-c|\ud\mu\\
	\le & \frac{2Cd(x,y)}{r}\vint_{B_{3r}(x_0)}|u-c|\ud\mu.
	\end{align*}
	
	Thus it suffices to prove \eqref{eq:hajtype}. Let $K$ and $r$ be as in the claim and denote by $C_K$ the doubling constant of $\mu$ in $K$. Given $x,y\in K$ with $d:=d(x,y)\le r$ and $r/2\le t< r$, we have that for the symmetric difference of two balls it holds 
	\[
	B_t(x)\triangle B_t(y)\subset A_{t-d,t+d}(x)\subset N_{2r}(K).
	\]
	Recall that $N_{2r}(K)$ stands for the $2r$-neighbourhood of $K$. This, formula (7) in~\cite{agg}  and the doubling property of $\mu$ on $K$ yield, for any $c\in \R$, the estimate 
	\begin{align*}
	|u_t(x)-u_t(y)|&=\left|\vint_{B_t(x)}(u-c)d\mu-\vint_{B_t(y)}(u-c)d\mu\right|\\
	&\le\frac{\mu(B_t(x)\triangle B_t(y))}{\mu(B_t(x))}\vint_{B_t(y)}|u-c|d\mu + \frac{1}{\mu(B_t(x))}\int_{B_t(x)\triangle B_t(y)}|u-c|d\mu\\
	&\le \frac{C_K^2\mu(A_{t-d,t+d}(x))}{\mu(B_r(x))}\vint_{B_r(y)}|u-c|\ud\mu+\frac{C_K}{\mu(B_r(x))}\int_{A_{t-d,t+d}(x)}|u-c|\ud\mu.
	\end{align*}
	By Lemma~\ref{elementary} -- applied twice ($f\equiv 1$ and $f=|u-c|$) with $d_1=-d$ and $d_2=d$ -- the above inequality implies
	\begin{align*}
	\int_{r/2}^r|u_t(x)-u_t(y)|\ud t \le &\, \frac{2C_K^2d\mu(A_{r/2-d,r+d}(x))}{\mu(B_r(x))}\vint_{B_r(y)}|u-c|\ud\mu+\frac{2C_Kd}{\mu(B_r(x))}\int_{A_{r/2-d,r+d}(x)}|u-c|\ud\mu\\
	\le &\, 2C_K^3d\left(\vint_{B_{2r}(y)}|u-c|\ud\mu+\vint_{B_{2r}(x)}|u-c|\ud\mu\right),
	\end{align*}
	where in the last inequality we also appeal to the doubling property~\eqref{eq:doublexp} of $\mu$ .
	Using this estimate we obtain
	\begin{align*}
	|u^r(x)-u^r(y)|\le \frac 2r \int_{r/2}^r|u_t(x)-u_t(y)|\ud t\le \frac{4C_K^3d(x,y)}{r}\left(\vint_{B_{2r}(y)}|u-c|\ud\mu+\vint_{B_{2r}(x)}|u-c|\ud\mu\right),
	\end{align*}
	proving \eqref{eq:hajtype}.
\end{proof}

\begin{proof}[Proof of Theorem \ref{thm:mvlip}]
	By considering the metric measure space $(\Omega,d,\mu|_{\Omega})$ we may assume that $\Omega=X$, cf. Remark \ref{rmk:domain}. Since $u$ is mv-harmonic, for every compact $K\subset X$ there exists $r_K>0$ so that $u=u_r$ on $K$ for all $r<r_K$. In particular, $u=u^r$ on $K$	whenever $r<r_K$. The Lipschitz continuity of $u$ and the estimate \eqref{eq:mvlip} then follows by Proposition~\ref{lipaprox}.
\end{proof}

\begin{remark}\label{rmk:nosublin}
	When $X$ is complete and $\mu$ is globally doubling, mv-harmonic functions satisfy $u=u_R$ on $X$, for \emph{any} $R>0$. Consequently \eqref{eq:mvlip} yields
	\begin{align*}
	\LIP(u|_{B_R(p)})\le \frac{C}{R}\vint_{B_{3R}(p)}|u-u(p)|\ud\mu, \quad p\in X.
	\end{align*}
\end{remark}
Remark \ref{rmk:nosublin} together with Harnack's inequality \eqref{eq:ballharnack} imply, in particular, that there are no non-constant mv-harmonic functions of sublinear growth. A function $u$ is said to have sublinear growth, if
\[
\limsup_{R\to \infty}\frac 1R \sup_{B_R(p)}|u|=0
\]
for some, and hence any $p\in X$.

The following observation can be considered as a counterpart of Cheng's result for harmonic functions with sublinear growth on complete manifolds with nonnegative Ricci curvature, see Corollary 1.5 in~\cite{pli}, and it is also related to the celebrated Phragmen-Lindel\"of theorem.
\begin{corol}
	Let $X=(X,d,\mu)$ be a doubling metric measure space. If $u$ is mv-harmonic and has sublinear growth, then it is constant.
\end{corol}
In the next section we discuss mean value harmonic functions with prescribed growth in more detail.

\subsection{Mean value-harmonic functions of polynomial growth}\label{sec:polygrowth}
The purpose of this section is to prove Proposition~\ref{prop:polygrowth}. In \cite{cm, cm2} Colding and Minicozzi proved a conjecture of Yau on the finite dimensionality of $\harm^m(M)$, the space of harmonic functions with polynomial growth of degree $m$, in a Riemannian $n$-manifold $M$ of non-negative Ricci curvature (here the harmonic functions are understood in the sense of solutions to the Laplace--Beltrami equation). Namely, they showed that,  the following bound holds:
\begin{equation*}
\dim \harm^m(M)\le C(n)m^{n-1}.
\end{equation*}
This result has been extended to Alexandrov and RCD-spaces, cf.~\cite{hua, hkx}. An argument of Li~\cite{li} uses the doubling property and the mean value inequality of subharmonic functions to obtain the estimate
\begin{equation}\label{eq:nonsharp}
\dim \harm^m(M)\le C(n)m^{Q}
\end{equation}
for manifolds with a measure satisfying \eqref{eq:doublexp} and a global Poincar\' e inequality. Thus estimate \eqref{eq:nonsharp} remains valid in the context of mv-harmonic functions on doubling spaces. 

In fact a modification of the same argument improves the bound \eqref{eq:nonsharp} if $\mu$ satisfies an annular decay property. We follow the strategy in~\cite{li}, see Lemmas \ref{lem:lemma1} and \ref{lem:lemma2}, and present the modifications needed for the proof of Proposition~\ref{prop:polygrowth}.

For $m>0$, let $\harm^m(X)$ denote the space of $u\in \harm(X)$ with growth rate at most $m$. A function $u\in \harm(X)$ is said to have \emph{growth rate at most $m$} if, for some (and hence all) $p\in X$, there exists $C>0$ so that
\begin{align*}\label{harm-poly-growth}
|u(x)|\le C(1+d_p(x))^m,\quad x\in X,
\end{align*}
where $d_p:X\to \R$ is the distance function $x\mapsto d(p,x)$.


Doubling measures on length spaces always satisfy an annular decay property for some $\alpha$, see the discussion following  Definition \ref{defn-an-dec}. Thus, Proposition \ref{prop:polygrowth} implies the following corollary.

\begin{corol}
	Let $(X, d, \mu)$ be a complete geodesic doubling metric measure space with $Q>1$. Then there exists $\delta>0$,  depending only on the doubling constant of $\mu$, so that
	\begin{align*}
	\dim \harm^m(X)\le C(Q)m^{Q-\delta}.
	\end{align*}
\end{corol}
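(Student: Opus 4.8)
The plan is to deduce this corollary directly from Proposition \ref{prop:polygrowth} by verifying that its hypotheses are met. The only gap between the corollary and the proposition is that the proposition presupposes a specific annular-decay exponent $\alpha$, whereas here we are merely told that $(X,d,\mu)$ is a complete geodesic doubling metric measure space with doubling exponent $Q>1$.

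First I would invoke the result of Buckley recorded in the discussion after Definition \ref{defn-an-dec}: by \cite[Corollary 2.2]{buc}, a length space (in particular a geodesic space) carrying a doubling measure automatically satisfies the $\alpha$-annular decay property for some $\alpha\in(0,1]$, where crucially $\alpha$ depends only on the doubling constant $C_\mu$ of $\mu$. Fix such an $\alpha$. Then the space $X$ satisfies all the hypotheses of Proposition \ref{prop:polygrowth}: it is complete doubling with $Q=\log_2C_\mu>1$, and it has $\alpha$-annular decay. Applying the proposition gives, for every $m>0$,
\[
\dim\harm^m(X)\le C(Q,\alpha)\,m^{Q-\alpha}.
\]

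Finally I would set $\delta:=\alpha$. Since $\alpha$ depends only on the doubling constant of $\mu$, so does $\delta$; and since $Q$ and $\alpha$ both depend only on $C_\mu$, the constant $C(Q,\alpha)$ can be written as a constant $C(Q)$ (or, if one prefers, $C$ depending only on the doubling constant). This yields exactly the asserted bound $\dim\harm^m(X)\le C(Q)m^{Q-\delta}$.

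There is essentially no obstacle here: the corollary is a packaging statement, and the only point requiring a moment's care is making explicit that the annular-decay exponent furnished by Buckley's theorem is controlled purely by the doubling constant, so that both $\delta$ and the final multiplicative constant are legitimately functions of the doubling data alone. One should also note in passing that the hypothesis $Q>1$ is exactly what is needed for Proposition \ref{prop:polygrowth} to apply and for the exponent $Q-\delta$ to be the meaningful quantity; no genuine additional work beyond citing the two stated results is involved.
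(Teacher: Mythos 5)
Your proposal is correct and is exactly the paper's argument: the corollary is stated immediately after the observation that, by Buckley's theorem, a doubling measure on a length (hence geodesic) space has the $\alpha$-annular decay property with $\alpha$ depending only on the doubling constant, and then Proposition \ref{prop:polygrowth} is applied with $\delta=\alpha$. Your extra remark that $\alpha$ (hence $\delta$ and the constant) is controlled solely by the doubling data is precisely the point the paper relies on.
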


Let $X=(X,d,\mu)$ be a complete doubling metric measure space, where $\mu$ has $\alpha$-annular decay. Given $R>0$, we define a bi-linear form 
\begin{align*}
A_R(u,v):=\int_{B_R(p)}uv\ud\mu, \quad u,v\in \harm(X),
\end{align*}
and note that $A_R$ is symmetric and positive semidefinite. It follows from the proof of \cite[Lemma 3.4]{hua} that, for any finite dimensional vector subspace $V\subset \harm(X)$, there exists a radius $R_0>0$ so that $A_R$ is an inner product on $V$ for every $R>R_0$. 

In order to prove Proposition~\ref{prop:polygrowth} we need the following auxiliary results. 

\begin{lemma}\label{lem:lemma1}
	Let $V$ be a $k$-dimensional linear subspace of $\harm^m(X)$. For any $p\in X$, $\beta>1$, $\delta>0$, $R_0>0$  there exists $R>R_0$ such that if $u_1,\ldots, u_k$ is an orthonormal basis for $V$ with respect to the inner product $A_{\beta R}$, 
	then
	\[
	\int_{B_R(p)}(u_1^2+\ldots+u_k^2)\,d\mu\geq \frac{k}{\beta^{2m+Q+\delta}}.
	\]
\end{lemma}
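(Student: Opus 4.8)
The plan is to mimic Li's argument from \cite{li} (as transcribed in the proof of \cite[Lemma~3.4]{hua}), adapting the volume estimates to exploit the $\alpha$-annular decay rather than the bare doubling property. Fix the $k$-dimensional space $V\subset\harm^m(X)$, a basepoint $p$, and parameters $\beta>1$, $\delta>0$, $R_0>0$. For each $R>R_0$ large enough that $A_{\beta R}$ is an inner product on $V$ (such $R$ exist by the cited consequence of \cite[Lemma~3.4]{hua}), let $u_1^R,\ldots,u_k^R$ be an $A_{\beta R}$-orthonormal basis; the quantity $\int_{B_R(p)}\sum_i (u_i^R)^2\ud\mu$ is independent of the chosen orthonormal basis, so write it as $F(R)$. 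Note $0\le F(R)\le k$ since $A_R\le A_{\beta R}$, and one shows $F$ is (essentially) nondecreasing in the sense relevant below — or at least that the ratio cannot stay small on a long scale.

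The core is a standard ``if it were always small, iterate and contradict polynomial growth'' argument. Suppose, for contradiction, that $F(R)<k\beta^{-(2m+Q+\delta)}$ for \emph{every} $R>R_0$. Iterating from some $R_1>R_0$ along the geometric sequence $R_1,\beta R_1,\beta^2 R_1,\ldots$, and using the change-of-basis relation between an $A_{\beta^{j+1}R_1}$-orthonormal basis and an $A_{\beta^{j}R_1}$-orthonormal basis (which is controlled by a trace/determinant comparison of the Gram matrices $A_{\beta^j R_1}$), one derives that the $L^2$-mass of elements of $V$ on $B_{\beta^j R_1}(p)$ grows at least like $\beta^{j(2m+Q+\delta)}$ relative to its mass on $B_{R_1}(p)$. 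On the other hand, the polynomial growth bound $|u(x)|\le C(1+d_p(x))^m$ together with the volume upper bound on $B_{\beta^j R_1}(p)$ — and here is exactly where the annular decay is used: the doubling inequality \eqref{eq:doublexp} gives $\mu(B_{\beta^j R_1}(p))\le C\beta^{jQ}\mu(B_{R_1}(p))$, and the $\alpha$-annular decay sharpens the effective volume growth appearing in the iterated estimate by a factor $\beta^{-j\alpha}$ — forces $\int_{B_{\beta^j R_1}(p)}u^2\ud\mu\le C\beta^{j(2m+Q-\alpha)}$ for $u\in V$ with $A_{R_1}(u,u)$ normalized. Comparing the two growth rates, $\beta^{j(2m+Q+\delta)}$ versus $\beta^{j(2m+Q-\alpha)}$, and choosing $\delta$ small enough relative to $\alpha$ (or simply $\delta>-\alpha$, which is free), yields a contradiction as $j\to\infty$. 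Hence there must exist some $R>R_0$ with $F(R)\ge k\beta^{-(2m+Q+\delta)}$, which is the claim.

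The main obstacle I anticipate is making the ``change of orthonormal basis across scales'' step precise and extracting from it the clean recursive inequality $F(\beta R)\ge \beta^{-(2m+Q+\delta)}\cdot(\text{something involving }F(R)\text{ and the Gram determinant ratio})$; in Li's original this is packaged via an elementary lemma on how $\operatorname{tr}(A_{\beta R}^{-1}A_R)$ relates to $\det$ ratios, and one must verify that the annular-decay volume bound is genuinely what lets the exponent $Q$ be replaced by $Q-\alpha$ rather than merely reshuffling constants. A secondary, more routine point is justifying that $A_{\beta R}$ becomes an inner product on $V$ for all large $R$ (used to make the orthonormal basis meaningful), which is quoted from \cite{hua} and should be cited rather than reproven. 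Everything else — the polynomial growth estimate on $\int_{B_{\beta^j R}(p)}u^2\ud\mu$ via $|u|\le C(1+d_p)^m$ and the volume bound — is a direct computation.
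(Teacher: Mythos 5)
Your skeleton is the right one: this is Li's Lemma~2 argument (iterate a comparison of the forms $A_R$ and $A_{\beta R}$ along the geometric sequence $\beta^jR_1$ and contradict polynomial growth), and that is exactly what the paper does --- its ``proof'' is only the remark that Li's argument carries over verbatim. But two points in your write-up need correction. First, the annular decay is \emph{not} used in this lemma, and your claim that it ``sharpens the effective volume growth'' to give $\int_{B_{\beta^jR_1}(p)}u^2\ud\mu\le C\beta^{j(2m+Q-\alpha)}$ is unjustified: the $\alpha$-annular decay \eqref{eq-def-an-dec} only controls the measure of thin annuli $B_r\setminus B_{(1-\ep)r}$ and gives no improvement on the volume of full balls, so the best available bound is $\mu(B_{\beta^jR_1}(p))\le C\beta^{jQ}\mu(B_{R_1}(p))$ from doubling, hence $\int_{B_{\beta^jR_1}(p)}u^2\ud\mu\le C\beta^{j(2m+Q)}$. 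This is also all you need: the contradiction against a lower bound growing like $\beta^{j(2m+Q+\delta)}$ comes solely from $\delta>0$; no relation between $\delta$ and $\alpha$ is involved (your parenthetical ``$\delta>-\alpha$'' is a red herring). The exponent improvement $Q\mapsto Q-\alpha$ via annular decay happens only in Lemma~\ref{lem:lemma2}.

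Second, the step you flag as the main obstacle --- the recursive inequality across scales --- is a genuine gap as written, but it has a standard clean resolution that avoids any change-of-orthonormal-basis bookkeeping: fix once and for all a basis $v_1,\dots,v_k$ of $V$ and let $G_r$ denote the Gram matrix of $A_r$ in that basis. Then $F(R)=\operatorname{tr}(G_{\beta R}^{-1}G_R)$, and AM--GM applied to the (positive) eigenvalues of $G_{\beta R}^{-1/2}G_RG_{\beta R}^{-1/2}$ gives $F(R)\ge k(\det G_R/\det G_{\beta R})^{1/k}$. Your contradiction hypothesis $F(R)<k\beta^{-(2m+Q+\delta)}$ for all $R>R_0$ therefore forces $\det G_{\beta R}>\beta^{k(2m+Q+\delta)}\det G_R$, which telescopes to $\det G_{\beta^jR_1}>\beta^{jk(2m+Q+\delta)}\det G_{R_1}$ (determinants in a fixed basis, so nothing to match up between scales). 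On the other hand Hadamard's inequality and the bounds $|v_i|\le C(1+d_p)^m$, $\mu(B_{\beta^jR_1}(p))\le C\beta^{jQ}$ give $\det G_{\beta^jR_1}\le\prod_iA_{\beta^jR_1}(v_i,v_i)\le C'\beta^{jk(2m+Q)}$, whence $\beta^{jk\delta}\le C'/\det G_{R_1}$ for all $j$, which is absurd. With these two repairs your proof is complete and coincides with the one the paper invokes.
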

\begin{proof}
	The proof of~\cite[Lemma 2]{li} for manifolds carries over to the setting of metric measure spaces under our assumptions. See also \cite[Lemma 3.7]{hua}, where the lemma is proven in the setting of Alexandrov spaces, and \cite[Lemma 5.2]{hkx} for the formulation of the lemma in the $RCD^*(0,N)$ spaces.
\end{proof}

\begin{lemma}\label{lem:lemma2}
	Let $V$ be a $k$-dimensional linear subspace of $\harm^m(X)$. Then, there exists a constant $C=C(Q)$ such that for any base $u_1,\ldots, u_k$ of $V$, any $p\in X$, $R>0$ and any $\ep\in(0,\frac12)$ it holds that
	\[
	\int_{B_R(p)}(u_1^2+\ldots+u_k^2)\,d\mu\leq \frac{C}{\ep^{Q-\alpha}} \sup_{\tiny{ \sum_{i}^ka_i^2=1}}\int_{B_{(1+\ep)R}(p)} |a_1u_1+\ldots+a_ku_k|^2\ud\mu.
	\]
\end{lemma}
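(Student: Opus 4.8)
The plan is to follow Li's argument \cite{li} (and its adaptation in \cite{hua}), with the one modification that we invoke the $\alpha$-annular decay property wherever Li uses a crude volume comparison, thereby gaining the improved power $Q-\alpha$ instead of $Q$. Fix $p\in X$, $R>0$, $\varepsilon\in(0,\tfrac12)$, and a basis $u_1,\dots,u_k$ of $V$. The key object is the symmetric positive semidefinite matrix $G=(G_{ij})$ with
\[
G_{ij}=\int_{B_R(p)}u_iu_j\ud\mu,
\]
together with the corresponding matrix $\widetilde G$ formed by integrating over the larger ball $B_{(1+\varepsilon)R}(p)$. We want to bound $\operatorname{tr}G$ from above by $C\varepsilon^{-(Q-\alpha)}$ times the largest eigenvalue of $\widetilde G$; since the largest eigenvalue of $\widetilde G$ equals the supremum on the right-hand side of the claimed inequality, this is exactly the assertion.

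First I would pass to a basis that diagonalizes the pair of quadratic forms: choose a linear change of coordinates so that, in the new basis $v_1,\dots,v_k$, the matrix $\widetilde G$ becomes the identity and $G$ becomes $\operatorname{diag}(\lambda_1,\dots,\lambda_k)$ with $\lambda_1\ge\dots\ge\lambda_k\ge 0$; this is the standard simultaneous diagonalization, valid because $\widetilde G$ is positive definite (as $A_{(1+\varepsilon)R}$ restricted to $V$ is an inner product for $R$ large, by the remark after Lemma 3.8). Note $\operatorname{tr}G$ is invariant under the renormalization only up to the factor coming from $\widetilde G$; more precisely, what Li's lemma really establishes is $\sum_i\lambda_i\le C\varepsilon^{-(Q-\alpha)}\lambda_1$, and then translating back via $\lambda_1\le\sup_{\sum a_i^2=1}\int_{B_{(1+\varepsilon)R}(p)}|\sum a_iv_i|^2\ud\mu$ gives the stated form. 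The heart of the matter is therefore the inequality $\sum_i\lambda_i\le C\varepsilon^{-(Q-\alpha)}\lambda_1$. To prove it one uses the mean value inequality for the subharmonic functions $(\sum a_iv_i)^2$ — valid here because mv-harmonic functions satisfy the ball Harnack inequality \eqref{eq:ballharnack}, and more directly because $\Delta_r$ of a square of an mv-harmonic function is nonnegative, giving a sub-mean-value property — applied on balls $B_{\varepsilon R/2}(x)$ for $x$ ranging over a maximal $\varepsilon R/2$-separated set in $B_R(p)$. Summing these local estimates, each point of $B_{(1+\varepsilon)R}(p)$ is covered a bounded number of times (by doubling), and one obtains
\[
\int_{B_R(p)}\Big(\sum_i v_i^2\Big)\ud\mu\ \le\ C\,\sup_{x}\frac{1}{\mu(B_{\varepsilon R/2}(x))}\int_{B_{(1+\varepsilon)R}(p)}\Big(\sum_i v_i^2\Big)\ud\mu .
\]
The ratio $\mu(B_R(p))/\mu(B_{\varepsilon R/2}(x))$ for $x\in B_R(p)$ is, by plain doubling, at most $C\varepsilon^{-Q}$; this is exactly where Li stops. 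To gain the extra $\varepsilon^\alpha$, I would instead argue that the relevant excess mass lives in an annulus of width $\sim\varepsilon R$ and invoke \eqref{eq-def-an-dec}: writing $\int_{B_{(1+\varepsilon)R}(p)}=\int_{B_{(1-\varepsilon)R}(p)}+\int_{B_{(1+\varepsilon)R}(p)\setminus B_{(1-\varepsilon)R}(p)}$ and estimating the annular term via the $\alpha$-annular decay property (rescaled so that the outer radius is $(1+\varepsilon)R$ and the relative width is $\sim\varepsilon$), the extra factor $\varepsilon^\alpha$ appears, and combining with the covering argument yields the power $Q-\alpha$.

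The main obstacle I anticipate is bookkeeping the constants and radii in the annular decay step: the property \eqref{eq-def-an-dec} is stated for $B(x,r)\setminus B(x,r(1-\varepsilon))$, so one has to match the annulus $B_{(1+\varepsilon)R}(p)\setminus B_{(1-\varepsilon)R}(p)$ to this form (take $r=(1+\varepsilon)R$ and relative width $2\varepsilon/(1+\varepsilon)\le 2\varepsilon$), and keep track that all the balls $B_{\varepsilon R/2}(x)$ used in the covering genuinely sit inside $B_{(1+\varepsilon)R}(p)$ and that the overlap bound depends only on the doubling constant — hence only on $Q$. Everything else is a faithful transcription of \cite[Lemma~2]{li} (see also \cite[Lemma 3.8]{hua}, \cite[Lemma 5.3]{hkx}), using that mv-harmonic functions on our space obey the sub-mean-value inequality for squares and the volume doubling bound \eqref{eq:doublexp}, so I would present the proof by indicating these two inputs and then citing the computation, spelling out only the annular-decay refinement in detail.
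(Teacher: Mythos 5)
Your high-level architecture (pointwise rotation to a single unit-coefficient function, sub-mean-value inequality for its square since squares of mv-harmonic functions are subharmonic, volume comparison, annular decay to upgrade $\ep^{-Q}$ to $\ep^{-(Q-\alpha)}$) is the right one, but the specific mechanism you propose for extracting the factor $\ep^{\alpha}$ fails. If you apply the sub-mean-value inequality on balls of the \emph{fixed} radius $\ep R/2$, then at \emph{every} point $q\in B_R(p)$ --- including points near the center --- you pay the volume ratio $\mu(B_{(1+\ep)R}(p))/\mu(B_{\ep R/2}(q))\approx \ep^{-Q}$. Splitting off a single annulus afterwards cannot repair this: the inner region $B_{(1-\ep)R}(p)$ carries essentially the full measure of $B_R(p)$ and, under your scheme, still contributes $C\ep^{-Q}$, so the total remains $\ep^{-Q}$. (Splitting the integral over the \emph{large} ball $B_{(1+\ep)R}(p)$, as your formula literally indicates, is even less useful: annular decay controls the measure of the annulus, not the mass of $|\sum_i a_iu_i|^2$ there, and that mass may well concentrate on the annulus.) The missing idea is that the radius of the ball in the sub-mean-value inequality must \emph{vary with $q$}: at $q$ one uses the largest ball centered at $q$ contained in $B_{(1+\ep)R}(p)$, of radius $(1+\ep)R-d_p(q)$. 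That radius is comparable to $R$ for most $q$ (volume ratio $O(1)$ there) and degenerates to $\sim\ep R$ only near $\partial B_R(p)$. The paper then integrates the resulting weight $\bigl(1+\ep-d_p(q)/R\bigr)^{-Q}$ over $q\in B_R(p)$ via the layer-cake formula and applies the annular decay bound $\mu(B_R(p)\setminus B_{sR}(p))\le C(1-s)^\alpha\mu(B_R(p))$ for a \emph{continuum} of values of $s$, yielding $\int_{-\infty}^1(1+\ep-s)^{-Q-1+\alpha}\,\ud s\le C\ep^{-(Q-\alpha)}$. A single annulus cannot replace this continuum (or an equivalent dyadic family of $\sim\log(1/\ep)$ annuli).

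Two secondary points. Your displayed covering inequality has $\sum_i v_i^2$ under the integral on the right-hand side; after normalizing $\widetilde G$ to the identity that integral equals $k$, so as written the inequality bounds $\sum_i\lambda_i$ by $Ck\ep^{-Q}$, which is useless since the lemma is precisely the tool for bounding $k$. The rotation at $q$ (choosing $v_2,\dots,v_k$ to vanish at $q$) must be performed \emph{before} integrating, so that only the single function $v_1^2$, with Euclidean-unit coefficient vector, appears on the right, and one can then pass to the supremum over $\sum_ia_i^2=1$. Relatedly, since the lemma is stated for an arbitrary basis and the constraint $\sum_i a_i^2=1$ is the Euclidean one on coefficients, the paper uses a pointwise \emph{orthogonal} rotation of the given basis rather than the simultaneous diagonalization of $G$ and $\widetilde G$ (which is effected by a non-orthogonal matrix and changes the meaning of the supremum); your normalization can be salvaged with some extra linear algebra, but the orthogonal rotation is more direct.
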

\begin{proof}
	
	The proof of Lemma~\ref{lem:lemma2} is a modification of the corresponding one of~\cite[Lemma 1]{li}. We follow closely the proof of Lemma 5.3 in~\cite{hkx}. Fix $q\in B_r(p)$ and define $V_q:=\{u\in V: u(q)=0\}$. The subspace $V_q\subset V$ is of $\codim V_q\leq 1$ (since, if $u,v\not\in V_q$, then $u-\frac{u(q)}{v(q)}v\in V_{q}$).
	
	There exists an orthogonal change of variables $A$ on $V$ such that $A(u_i):=v_i$ for all $i=1,\ldots,k$ with $v_i\in V_{q}$ for $i=2,\ldots, k$.
	
	We recall the relevant part of Proposition 3.1 in~\cite{agg} which states that, \emph{if $f\in \harm(X)$, then $F\circ f$ is subharmonic provided $F: f(X)\to \R$ is convex}. We apply this result for $f=v_1$ and $F(s)=s^2$ to obtain that
	\begin{align}\label{lem2-key-est}
	\sum_{i=1}^{k}u_i^2(q)= \sum_{i=1}^{k}v_i^2(q)  &=v_1^2(q)\leq \vint_{B_{(1+\ep)R-d_p(q)}(q)} v_1^2(z)\,d\mu(z) \nonumber \\
	&\leq \sup_{\tiny{\substack{a_i\in \R, i=1,\ldots, k \\ \sum_{i}a_i^2=1}}}\frac{1}{\mu(B_{(1+\ep)R-d_p(q)}(q))}\int_{B_{(1+\ep)R}(p)} \left|\sum_{i=1}^k a_iu_i(z)\right|^2\, d\mu(z).
	\end{align} 
	We apply~\eqref{eq:doublexp} to obtain that
	\[ 
	\frac{1}{\mu(B_{(1+\ep)R-d_p(q)}(q))}\leq \frac{C_{\mu}^2}{\mu(B_{(1+\ep)R}(p))}\left(\frac{(1+\ep)R}{(1+\ep)R-d_p(q)}\right)^Q.
	\]
	Hence, upon integrating \eqref{lem2-key-est}, we arrive at
	
	\begin{align}\label{eq:Liest}
	&\sum_{j=1}^kA_R(u_j,u_j)\\
	\le &\frac{C}{\mu(B_R(p))}\left(\int_{B_R(p)}\left(1+\ep-\frac{d_p(q)}{R}\right)^{-Q}\ud\mu(q) \right)\sup_{\tiny{ \sum_{i}^ka_i^2=1}}\int_{B_{(1+\ep)R}(p)} |a_1u_1+\ldots+a_ku_k|^2\ud\mu.\nonumber
	\end{align}
	
	Denote 
	\[
	f:[0,1]\to \R,\quad f(t)=(1+\ep-t)^{-Q},
	\]
	and note that the claim follows directly from \eqref{eq:Liest} and the estimate
	\begin{equation}\label{eq:decayestimate}
	\vint_{B_R(p)}f(d_p/R)\ud\mu\le \frac{C}{\ep^{Q-\alpha}}.
	\end{equation}
	To obtain this, note that $f$ is smooth and increasing, and thus 
	\begin{align*}
	\int_{B_R(p)}f(d_p/R)\ud\mu=\int_0^\infty \mu(B_R(p)\cap \{ f\circ (d_p/R)\ge \lambda \})\ud\lambda=\int_{-\infty}^1 f'(s)\mu(B_R(p)\cap \{d_p\ge sR\})\ud s
	\end{align*}
	after a change of variables $f(s)=\lambda$. The $\alpha$-annular decay implies
	\[
	\mu(B_R(p)\cap \{d_p\ge sR\})=\mu(A_{sR,R}(p))\le C(1-s)^\alpha\mu(B_R(p))
	\]
	and therefore
	\begin{align*}
	\vint_{B_R(p)}f(d_p/t)\ud\mu\le &C\int_{-\infty}^1f'(s)(1-s)^\alpha\ud s\le CQ\int_{-\infty}^1 (1+\ep-s)^{-Q-1+\alpha}\ud s\\
	=& \frac{CQ}{Q-\alpha}\frac{1}{\ep^{Q-\alpha}},
	\end{align*}
	establishing \eqref{eq:decayestimate}.
\end{proof}
\begin{proof}[Proof of Proposition \ref{prop:polygrowth}]
	Let $R$ be large enough, $\beta=1+\ep$ and $\ep=1/(2m)$. Let $\{u_1,\ldots, u_k\}$ be an orthonormal basis with respect to $A_{\beta R}$.
	Combining the estimates in Lemmas \ref{lem:lemma1} and \ref{lem:lemma2} we obtain
	\[
	\frac{k}{(1+\ep)^{2m+Q+\delta}}\le \sum_{j=1}^kA_{R}(u_j,u_j)\le \frac{C(Q)}{\ep^{Q-\alpha}},
	\]
	since
	\[
	\sup_{\tiny{ \sum_{i}^ka_i^2=1}}\int_{B_{(1+\ep)R}(p)} |a_1u_1+\ldots+a_ku_k|^2\ud\mu=1.
	\]
	Thus
	\[
	k\le C(Q)(1+1/(2m))^{2m+Q+\delta}(2m)^{Q-\alpha}\le Cm^{Q-\alpha}
	\]
	after letting $\delta\to 0$.
\end{proof}

\section{H\"older continuity of strongly amv-harmonic functions}\label{sec:holder}
In this section we prove Theorems \ref{thm:holder} and \ref{thm:frachaj}. Throughout the section $X=(X,d,\mu)$ denotes a locally compact and doubling metric measure space with doubling exponent $Q$. We define the sharp maximal function $\M^\#u$ of a locally integrable function $u$ on $X$ as
\begin{align*}
\M^\#_Ru(x):=\sup_{0<r<R}\vint_{B_r(x)}|u-u_{B_r(x)}|\ud\mu,\quad x\in X.
\end{align*}
By standard maximal function estimates we have that, for any compact $K\subset X$, 
\begin{align*}
\int_{K}(\M^\#_Ru)^p\ud\mu\le C\int_{N_R(K)}|u|^p\ud\mu
\end{align*}
with a constant $C$ depending only on $p$ an the local doubling constant of $\mu$ on $K$, as long as $R<r_K$ (cf. \eqref{eq:doubl}). We use this observation in the proof of the next proposition without explicit mention.
\smallskip

We now briefly recall fractional Haj\l asz--Sobolev spaces and their local variants.
\begin{definition}\label{def:lochaj}
	Let $(X,d,\mu)$ be a metric measure space, and $1\le  p\le \infty$,  $0<\alpha\le 1$. The \emph{local fractional Haj\l asz-Sobolev space} $M^{\alpha,p}_{loc}(X)$ consists of all Borel functions $u\in L^p_{loc}(X)$ with the following property: there exists a null set $N\subset X$ and, for every compact $K\subset X$, a non-negative function $g_K\in L_{loc}^p(X)$ and $r_K>0$ such that 
	\begin{align}\label{eq:lochaj}
	|u(x)-u(y)|\le d(x,y)^\alpha[g_K(x)+g_K(y)],\quad x,y\in K\setminus N,\ d(x,y)<r_K.
	\end{align}
	If we can choose $g_K=g\in L^p(X)$ independently of $K$, and $r_K=\infty$, we say that $u\in M^{\alpha,p}(X)$, and call $g$ a Haj\l asz gradient of $u$.
\end{definition}

Fractional Sobolev--Haj\l asz spaces on connected doubling spaces (and more generally RD-spaces, cf. \cite{kyz}) coincide with a Triebel--Lizorkin space for appropriate parameters. For more details on the connection between Sobolev--Haj\l asz, Triebel--Lizorkin and Besov spaces we refer to \cite{kyz,kyz2}. Here we record the following Morrey embedding theorem for fractionl Sobolev--Haj\l asz spaces, see \cite[Corollary 1.4]{yangd}.

\begin{prop}[Fractional Morrey embedding]\label{prop:fracmorrey}
Let $B\subset X$ be a compact ball in a doubling metric measure space so that \eqref{eq:doublexp} holds in $B$ with exponent $Q$. Let $0<\alpha\le 1$, $0<p<\infty$ and suppose that $\alpha p>Q$. Then there exists a constant $C>0$ so that every $u\in M^{\alpha,p}(B)$ satisfies
\begin{align*}
|u(x)-u(y)|\le Cd(x,y)^{\alpha-Q/p}\left(\vint_{B}g^p\ud\mu\right)^{1/p},\quad x,y\in B,
\end{align*}
whenever $g\in L^p(B)$ is a Haj\l asz gradient of $u$.
\end{prop}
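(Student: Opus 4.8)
The plan is to prove the embedding by the classical chaining argument for Haj\l asz--Sobolev functions: one estimates the oscillation of $u$ along a telescoping sequence of balls shrinking to a point, and invokes the reverse doubling bound \eqref{eq:doublexp} together with the summability afforded by $\alpha p>Q$. First I would fix $x,y\in B$ that are Lebesgue points of $u$, set $\rho=d(x,y)$, and for $i\ge 0$ let $B_i=B(x,2^{1-i}\rho)$, so that $B_0\ni y$ and $B_i\downarrow\{x\}$; I would define $B_i'$ analogously around $y$ and put $\widetilde B=B(x,4\rho)$, which contains $B_0$ and $B_0'$ and, by doubling, has measure comparable to both. By the Lebesgue differentiation theorem $u(x)=\lim_i u_{B_i}$ and $u(y)=\lim_i u_{B_i'}$, whence
\[
|u(x)-u(y)|\le \sum_{i\ge 0}|u_{B_{i+1}}-u_{B_i}|+|u_{B_0}-u_{\widetilde B}|+|u_{\widetilde B}-u_{B_0'}|+\sum_{i\ge 0}|u_{B_{i+1}'}-u_{B_i'}|.
\]

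Two building blocks feed into this. First, a Poincar\'e-type bound obtained by integrating \eqref{eq:lochaj} (in its global form on $B$, with gradient $g$) twice: for any ball $B'=B(z,s)$ one gets $\vint_{B'}|u-u_{B'}|\ud\mu\le 2(2s)^\alpha\vint_{B'}g\ud\mu$. Second, for nested balls $B'\subset B''$ one has $|u_{B'}-u_{B''}|\le \tfrac{\mu(B'')}{\mu(B')}\vint_{B''}|u-u_{B''}|\ud\mu$. Combining these with doubling, each telescoping increment obeys $|u_{B_{i+1}}-u_{B_i}|\le C s_i^\alpha\vint_{B_i}g\ud\mu$ with $s_i=2^{1-i}\rho$, and the crossing increments obey the same bound with $s\sim\rho$. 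Passing to $L^p$, H\"older's inequality gives $\vint_{B_i}g\ud\mu\le (\vint_{B_i}g^p\ud\mu)^{1/p}$ and, by \eqref{eq:doublexp} applied with $R=r_B$ and $x$ the centre of $B$,
\[
\vint_{B_i}g^p\ud\mu\le \frac{\mu(B)}{\mu(B_i)}\vint_B g^p\ud\mu\le C\Big(\frac{r_B}{s_i}\Big)^{Q}\vint_B g^p\ud\mu ,
\]
so that $|u_{B_{i+1}}-u_{B_i}|\le C\, r_B^{Q/p}\, s_i^{\alpha-Q/p}\,\big(\vint_B g^p\ud\mu\big)^{1/p}$. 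Because $\alpha-Q/p>0$, the geometric series $\sum_i s_i^{\alpha-Q/p}$ converges to a constant times $\rho^{\alpha-Q/p}$, and the crossing terms are controlled identically. Summing everything gives $|u(x)-u(y)|\le C\,\rho^{\alpha-Q/p}\big(\vint_B g^p\ud\mu\big)^{1/p}$ for $\mu$-a.e. $x,y\in B$, with $C$ depending only on $r_B$, $Q$, $\alpha$, $p$ and the doubling constant. Since the right-hand side is continuous in $(x,y)$, $u$ agrees a.e. with a H\"older continuous function and the estimate holds throughout $B$ for this representative.

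The only genuinely delicate point is that some chain balls --- notably $\widetilde B$, and the larger $B_i$ when $\rho$ is comparable to $r_B$ --- may protrude from $B$, while $g$ is only defined there. I would handle this by extending $g$ by zero, running the argument above verbatim in the regime $\rho\le\tfrac14 r_B$ where all relevant balls lie inside $B$, and disposing of the range $\rho>\tfrac14 r_B$ by a direct doubling comparison on a fixed enlargement of $B$ (there $\mu(\widetilde B)$ and $\mu(B)$ are comparable and $\rho^{\alpha-Q/p}\sim r_B^{\alpha-Q/p}$, so the bound follows at once from the two building blocks). A second, minor caveat: the step $\vint g\le(\vint g^p)^{1/p}$ requires $p\ge 1$; for $0<p<1$ one replaces the average of $g$ by a restricted Hardy--Littlewood maximal function of $g^p$, exactly as in \cite[Corollary 1.4]{yangd}. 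I expect the chaining and the geometric summation to be entirely routine, and the in/out-of-$B$ bookkeeping to be the only real (and still minor) obstacle.
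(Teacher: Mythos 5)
The paper does not actually prove this proposition: it is quoted from \cite[Corollary 1.4]{yangd}, with the remark that it also follows from \cite[Theorem 8.7]{haj} applied to the snowflaked space $(X,d^\alpha,\mu)$. Your chaining argument is precisely the standard proof underlying those references, and its core is correct: the Poincar\'e-type bound obtained by integrating the Haj\l asz inequality twice, the nested-average comparison, the lower volume bound \eqref{eq:doublexp}, and the geometric series controlled by $\alpha-Q/p>0$ fit together exactly as you describe, and your caveat about $p<1$ (replacing H\"older's inequality by a restricted maximal function of $g^p$) matches what is done in \cite{yangd}.

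The gap is at the one point you flag and then dismiss as minor. First, extending $g$ by zero does not help: when a chain ball $B_i$ protrudes from $B$, the average $u_{B_i}$ is simply undefined, since $u$ lives only on $B$. Second, the case split on $\rho$ does not localize the chain: if $x$ lies at distance $\varepsilon\ll\rho$ from $\partial B$, then $B(x,4\rho)\not\subset B$ no matter how small $\rho\le\tfrac14 r_B$ is, so the regime $\rho\le\tfrac14 r_B$ does not put ``all relevant balls inside $B$''. The standard repair is to run the entire chain with $E_i:=B_i\cap B$ in place of $B_i$: both of your building blocks use only that the $E_i$ are nested sets of diameter $\lesssim s_i$, but the final step then requires $\mu(B(z,s)\cap B)\ge c\,(s/r_B)^Q\mu(B)$ for $z\in B$ and $s\le r_B$, i.e.\ the bound \eqref{eq:doublexp} for the balls of the \emph{subspace} $B$ rather than the ambient balls; this is how the hypothesis that \eqref{eq:doublexp} ``holds in $B$'' has to be read, and it needs a short justification (immediate in length spaces, or by first proving the estimate on $\tfrac12B$ and exhausting). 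Alternatively one can bypass the boundary bookkeeping entirely, as the paper does: $M^{\alpha,p}(B;d)=M^{1,p}(B;d^\alpha)$, the snowflake $(X,d^\alpha,\mu)$ is doubling with exponent $Q/\alpha$, the condition $\alpha p>Q$ is exactly $p>Q/\alpha$, and the H\"older exponent $1-(Q/\alpha)/p$ with respect to $d^\alpha$ translates into $\alpha-Q/p$ with respect to $d$, so the statement reduces verbatim to \cite[Theorem 8.7]{haj}.
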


Proposition \ref{prop:fracmorrey} also follows from \cite[Theorem 8.7]{haj} by considering the metric space $(X,d^\alpha,\mu)$. We omit the details.
\smallskip

Recall the definition of space $\AMV^p$ in~\eqref{eq:amvdomain} and~\eqref{eq:amvnorm}. 
\begin{prop}\label{prop:improve}
	Let $\Omega\subset X$ be a domain, and $u\in \AMV^p(\Omega)$. Then $u\in M_{loc}^{1/2,p}(\Omega)$. Moreover, if $u\in M_{loc}^{\alpha,p}(\Omega)$ for some $\alpha\in(0,1)$, then $u\in M^{\alpha',p}_{loc}(\Omega)$, where 
	\[
	\alpha'=\frac{2-1/p}{3-\alpha-1/p}>\alpha.
	\]
\end{prop}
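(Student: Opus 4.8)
The plan is to run an iteration that upgrades fractional Hajłasz regularity using the hypothesis $u\in\AMV^p(\Omega)$, and to package the iteration so that a single lemma gives both the base case and the induction step. The key object is the refined averaging operator $A^r$ from \eqref{eq:refav}: by Proposition \ref{lipaprox}, $A^ru$ is locally Lipschitz with Hajłasz-type control \eqref{eq:hajtype}, so it is harmless to work with $A^ru$ and then estimate $u-A^ru$. First I would write, on a fixed compact $K$ with \eqref{eq:doubl} valid below scale $r_K$,
\[
u-A^ru=-\tfrac 2r\int_{r/2}^r (A_tu-u)\ud t=-\tfrac 2r\int_{r/2}^r t^2\,\Delta_tu\ud t,
\]
so that $\|u-A^ru\|_{L^p(K)}\lesssim r^2\sup_{r/2\le t\le r}\|\Delta_tu\|_{L^p(N_r(K))}$. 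Since $u\in\AMV^p_{loc}$, the right-hand side is $O(r^2)$ as $r\to 0$ (more precisely, bounded by $Cr^2$ for $r$ small, by the definition of the amv-norm with the $\limsup$). This is the mechanism that feeds decay into the scheme.

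Next I would combine this with the two available sources of smallness: the Lipschitz bound on $A^ru$, which gives oscillation control $\vint_{B_\rho(x)}|A^ru-(A^ru)_{B_\rho(x)}|\ud\mu\lesssim \rho\cdot r^{-1}\vint_{B_{3r}}|u-c|\ud\mu$ on balls of radius $\rho\le r$; and, when we already know $u\in M^{\alpha,p}_{loc}$, the Hajłasz gradient $g=g_K$ controlling $\vint_{B_\rho(x)}|u-u_{B_\rho(x)}|\ud\mu\lesssim \rho^\alpha(g(x)+\text{maximal function of }g)$. The strategy is the standard "telescoping over dyadic scales" estimate for membership in $M^{\alpha',p}$: to bound $|u(x)-u(y)|$ for $d(x,y)=:\delta$ small, pick an intermediate averaging scale $r=\delta^{\theta}$ for a suitable exponent $\theta\in(0,1)$, write
\[
|u(x)-u(y)|\le |u(x)-A^ru(x)|+|A^ru(x)-A^ru(y)|+|A^ru(y)-u(y)|,
\]
estimate the middle term by the Lipschitz bound ($\lesssim \delta\, r^{-1}(\text{average of }|u-c|)$) and the outer terms pointwise $\mu$-a.e.\ by a maximal-function surrogate of $\|u-A^ru\|_{L^p}\lesssim r^2(\cdots)$ together with the a priori $M^{\alpha,p}$ control of the oscillation of $u$ at scale $r$. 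Optimizing the split between the "$\delta/r$" gain from the Lipschitz term and the "$r^{2}$ versus $r^\alpha$" balance from the remainder terms is exactly what produces the self-improving exponent $\alpha'=(2-1/p)/(3-\alpha-1/p)$; the $1/p$ appears because passing from the $L^p$ bound on $u-A^ru$ to a pointwise a.e.\ bound costs a factor controlled by the maximal function, whose $L^p$ boundedness forces the $-Q/p$-type correction in the scaling (one absorbs it as in the proof of Proposition \ref{prop:fracmorrey}). For the base case one simply takes $\alpha=0$ in the same computation: there is no a priori fractional control, one uses only $\|u-A^ru\|_{L^p}=O(r^2)$ and $\LIP(A^ru)=O(r^{-1})$, and the optimization yields exponent $1/2$ (which is the $\alpha\to0$ value of the formula after accounting for the $1/p$ term — note $(2-1/p)/(3-1/p)\ge 1/2$, and one only claims $1/2$).

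The step I expect to be the genuine obstacle is making the pointwise-a.e.\ Hajłasz inequality rigorous: the quantities $\|\Delta_tu\|_{L^p(K)}$ and $\|u-A^ru\|_{L^p(K)}$ are norms, not pointwise data, so to produce a function $g_K$ satisfying \eqref{eq:lochaj} I must introduce, for each small dyadic $r$, the function $x\mapsto \big(\M(|u-A^ru|^p)(x)\big)^{1/p}$ (restricted maximal function at scale $\sim r$), control its $L^p$ norm by $Cr^2$ via the $\AMV^p$ hypothesis and maximal inequalities, and then sum a geometric series over dyadic scales $r=2^{-k}$ with the right weights to obtain a single $g_K\in L^p_{loc}$; the summability is where the inequality $\alpha'>\alpha$ (equivalently, that the chosen $\theta$ lies in $(0,1)$ and the exponents in the geometric series are on the good side of critical) is used. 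The rest — uniform doubling on compacta, the Lebesgue differentiation theorem to recover $u$ from $A^ru$ a.e., and the bookkeeping of constants depending only on $p$ and the local doubling constant — is routine. Finally, the displayed value of $\alpha'$ and the inequality $\alpha'>\alpha$ follow by elementary algebra from the optimization, so iterating Proposition \ref{prop:improve} finitely many times pushes $\alpha$ past any target below $1$, which is precisely what is needed to deduce Theorem \ref{thm:frachaj} and, via Proposition \ref{prop:fracmorrey}, Theorem \ref{thm:holder}.
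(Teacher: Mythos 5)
Your skeleton coincides with the paper's: the three-term split through $A^ru$, Proposition \ref{lipaprox} for the middle term, and the balance of scales $r=d(x,y)^{\theta}$ are exactly the paper's steps, and the base case is handled the same way. The one genuine divergence is how you treat $|u(x)-A^ru(x)|$. You record only the norm bound $\|u-A^ru\|_{L^p}\lesssim r^2\sup_t\|\Delta_tu\|_{L^p}$ and then declare the passage to a pointwise Haj\l asz inequality (maximal-function surrogates at each dyadic scale, summed in a geometric series) to be the main obstacle. That obstacle is self-inflicted: the identity you wrote is already pointwise, $|u(z)-A^ru(z)|\le \frac{2}{r}\int_{r/2}^{r}t^2|\Delta_tu(z)|\,\ud t\le 2r\int_0^r|\Delta_tu(z)|\,\ud t$, and the function $z\mapsto\int_0^{r_K}|\Delta_tu(z)|\,\ud t$ (resp.\ $z\mapsto\bigl(\int_0^{r_K}|\Delta_tu(z)|^p\,\ud t\bigr)^{1/p}$) lies in $L^p(K)$ by Minkowski's integral inequality, since $t\mapsto\|\Delta_tu\|_{L^p}$ is bounded near $0$ by the $\AMV^p$ hypothesis. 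So the Haj\l asz gradient is written down directly, with no maximal function for this term and no dyadic summation; the sharp maximal function enters only through Proposition \ref{lipaprox} for the middle term.

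More importantly, your explanation of where the $1/p$ in $\alpha'$ comes from is wrong. In the paper it arises from H\"older's inequality in the radial variable: $r\int_0^r|\Delta_tu(x)|\,\ud t\le r^{2-1/p}\bigl(\int_0^r|\Delta_tu(x)|^p\,\ud t\bigr)^{1/p}$, which is what makes the outer terms scale like $r^{2-1/p}$ against $d(x,y)/r^{1-\alpha}$ and produces $\alpha'=(2-1/p)/(3-\alpha-1/p)$. It is not a ``$-Q/p$ Morrey-type correction'' from $L^p$-boundedness of the maximal function; that mechanism would yield a dimension-dependent exponent and would not reproduce the stated formula. If you actually run your dyadic scheme, the summability constraint forces exponents strictly below $2/(3-\alpha)$ rather than equal to $(2-1/p)/(3-\alpha-1/p)$; since the former exceeds the latter, your route would still prove the proposition (local Haj\l asz classes are nested in the exponent for small distances), but not by the computation you describe, and the specific displayed value of $\alpha'$ would not emerge from it.
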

\begin{proof}
	Let $K\subset \Omega$ be compact, and define
	\[
	R_K=\frac 16\min\{\dist(K,X\setminus\Omega)^2,r_K^2,1\},
	\]
	where $r_K$ is given by the locally doubling condition \eqref{eq:doubl}. Let $x,y\in K$ satisfy $d(x,y)<R_K$. For any $r\in (d(x,y),r_K)$ we have that
	\begin{align}\label{eq:elest}
	\begin{split}
	|u(x)-u(y)|\le & |u(x)-A^ru(x)|+|u(y)-A^ru(y)|+|A^ru(x)-A^ru(y)|\\
	\le & 2 r \int_0^r(|\Delta_tu(x)|+|\Delta_t u(y)|)\ud t +\frac{Cd(x,y)}{r}\vint_{B_{3r}(x)}|u-u_{B_{3r}(x)}|\ud\mu,
	\end{split}
	\end{align}
	where the third term is estimated using Proposition \ref{lipaprox} with $c=u_{B_{3r}(x)}$ and the first two terms follow from the estimate 
	\begin{align*}
	\abs{u(z) - A^r u(z)} & \leq \frac{2}{r} \int_{ \frac{r}{2}}^r \abs{u(z)-\vint_{B_t(z)} u(w) \ud \mu(w) } \ud t = \frac 2r \int_{ \frac{r}{2}}^r t^2 \abs{\Delta_t u(z)} \ud t \\
	& \leq 2r \int_{ \frac{r}{2}}^r \abs{\Delta_t u(z)} \ud t \leq 2r \int_0^r \abs{\Delta_t u(z)} \ud t,\quad z=x,y.
	\end{align*}
	By choosing $r=d(x,y)^{1/2}$ we have that $d(x,y)<r<r_K$ and thus, by \eqref{eq:elest}, we obtain
	\[
	|u(x)-u(y)|\le d(x,y)^{1/2}[g(x)+g(y)],
	\]
	where
	\[
	g(x)=2 \int_0^{r_K}|\Delta_tu(x)|\ud t+C \M_{3r_K}^\#u(x).
	\]
	This proves the first claim. 
	
Suppose $u\in M^{\alpha,p}_{loc}(\Omega)$, and let $g_K$ be the Haj\l asz gradient and $\tilde{r}_K$ the radius in Definition \ref{def:lochaj}. Set
	\[
	R_K=\frac 16\min\{ \tilde{r}_K,\dist(K,X\setminus\Omega)^{3-\alpha-1/p}, r_K^2\}.
	\]
	From \eqref{eq:elest} we obtain that as long as $6r<\tilde{r}_K$ and $d(x,y) < R_K$, then
	\begin{align}\label{eq:elest2}
	|u(x)-u(y)|\le C r^{2-1/p} \left(\int_0^r(|\Delta_tu(x)|^p+|\Delta_t u(y)|^p)\ud t\right)^{1/p}+C\frac{d(x,y)}{r^{1-\alpha}}[A_{6r}g_K(x)+A_{6r}g_K(y)].
	\end{align}
	Indeed, applying the H\"older inequality to the first term in the right-hand side of \eqref{eq:elest} we obtain, up to a multiplicative constant, that
	\[ r \int_0^r \abs{\Delta_t u(x) } \ud t \leq r \left( \int_0^r \abs{\Delta_t u(x)}^p \ud t\right)^{\frac 1p} r^{\frac{p-1}{p}} = r^{2-\frac 1p} \left( \int_0^r \abs{\Delta_t u(x)}^p\ud t \right)^{\frac 1p}.\]
	The second term in \eqref{eq:elest} is estimated using the Haj\l asz inequality, up to a multiplicative constant, in the following way
	\begin{align*}
	\frac{d(x,y)}{r}\vint_{B_{3r}}|u-u_{B_{3r}(x)}|\ud\mu &= \frac{d(x,y)}{r} \vint_{B_{3r}(x)} \abs{u(w) - \vint_{B_{3r}(x)} u(z) \ud \mu(z)} \ud \mu(w) \\
	& \leq \frac{d(x,y)}{r} \vint_{B_{3r}(x)} \vint_{B_{3r}(x)} \abs{u(w) - u(z)} \ud \mu(z) \ud \mu(w) \\
	& \leq \frac{d(x,y)}{r} \vint_{B_{3r}(x)} \vint_{B_{3r}(x)} d(w,z)^\alpha (g_K(w)+g_K(z)) \ud \mu(z) \ud \mu(w) \\
	& \leq \frac{d(x,y)}{r} (6r)^\alpha \vint_{B_{3r}(x)} \vint_{B_{3r}(x)}  (g_K(w)+g_K(z)) \ud \mu(z) \ud \mu(w) \\
	& \leq C \frac{d(x,y)}{r^{1-\alpha}} [A_{6r} g_K(x) + A_{6r}g_K(y)] .
	\end{align*}
	We choose $r$ in \eqref{eq:elest2} such that
	\begin{align*}
	r^{2-1/p}=\frac{d(x,y)}{r^{1-\alpha}},
	\end{align*}
	i.e.
	\[
	r=d(x,y)^{1/(3-\alpha-1/p)}<r_K^{1/(3-\alpha-1/p)}< \tilde{r}_K,
	\]
	to obtain
	\begin{align*}
	|u(x)-u(y)|\le d(x,y)^{\alpha'}[g(x)+g(y)]
	\end{align*}
	where
	\[
	\alpha'=\frac{2-1/p}{3-\alpha-1/p}
	\]
	and
	\[
	g(x)=C\left(\int_0^{\tilde{r}_K}|\Delta_tu(x)|^p\ud t\right)^{1/p}+C\M_{\tilde{r}_K}g_K(x).
	\]
\end{proof}
Theorem \ref{thm:frachaj} follows immediately from Proposition \ref{prop:improve}.
\begin{proof}[Proof of Theorem \ref{thm:frachaj}]
	Define $\alpha_0=1/2$ and
	\[
	\alpha_{n+1}=\frac{2-1/p}{3-\alpha_n-1/p},\quad n\ge 0.
	\]
	We see that $\alpha_n$ forms an increasing sequence converging to 1. By Proposition \ref{prop:improve} we have that $u\in M^{\alpha_k,p}_{loc}(\Omega)$ for every $k$. The claim follows.
\end{proof}

In light of the fractional Morrey embedding (Proposition \ref{prop:fracmorrey}), Theorem \ref{thm:frachaj} immediately yields the following corollary, which also contains Theorem \ref{thm:holder}.
\begin{corol}\label{cor:holder}
Let $\Omega\subset X$ be a domain in a doubling metric measure space with doubling exponent $Q$. If $p>Q$ and $u\in \AMV_{loc}^p(\Omega)$, then $u$ is locally $\beta$-H\"older continuous for every $\beta<1-Q/p$.

In particular a strongly amv-harmonic function $u$ on $\Omega$ is $\beta$-H\"older continuous for any $\beta<1$. 
\end{corol}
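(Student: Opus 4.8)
The plan is to deduce the corollary directly from Theorem~\ref{thm:frachaj} together with the fractional Morrey embedding of Proposition~\ref{prop:fracmorrey}; the only real bookkeeping is passing from the \emph{local} fractional Haj\l asz--Sobolev condition to a genuine $M^{\alpha,p}$ bound on a small ball, where the Morrey embedding applies verbatim.

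First I would fix $x_0\in\Omega$ and an exponent $\alpha\in(Q/p,1)$. Since $u\in\AMV^p_{loc}(\Omega)$, Theorem~\ref{thm:frachaj} gives $u\in M^{\alpha,p}_{loc}(\Omega)$; let $g=g_K$ and $r_K>0$ be the Haj\l asz gradient and radius from Definition~\ref{def:lochaj} associated with a compact ball $K=\overline B_R(x_0)\subset\Omega$. Choosing $\rho>0$ with $2\rho<r_K$ and $\overline B_\rho(x_0)\subset K$, every pair $x,y\in B:=\overline B_\rho(x_0)$ satisfies $d(x,y)<r_K$, so \eqref{eq:lochaj} holds for all $x,y\in B\setminus N$; hence $g|_B$ is a Haj\l asz gradient of $u$ on $B$ and $u\in M^{\alpha,p}(B)$. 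Since $\mu$ is doubling with doubling exponent $Q$, estimate \eqref{eq:doublexp} holds on $B$ with this $Q$, and because $\alpha p>Q$ Proposition~\ref{prop:fracmorrey} produces a constant $C$ with
\begin{equation*}
|u(x)-u(y)|\le Cd(x,y)^{\alpha-Q/p}\left(\vint_B g^p\ud\mu\right)^{1/p},\qquad x,y\in B.
\end{equation*}
Thus $u$ is $(\alpha-Q/p)$-H\"older continuous on $B$, and as $x_0$ was arbitrary, $u$ is locally $(\alpha-Q/p)$-H\"older continuous on $\Omega$. Letting $\alpha\to 1^-$, and using that H\"older continuity with a larger exponent forces it with any smaller exponent on bounded sets, we obtain local $\beta$-H\"older continuity for every $\beta<1-Q/p$.

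For the final assertion I would first check that a strongly amv-harmonic $u$ lies in $\AMV^p_{loc}(\Omega)$ for \emph{every} $p\in[1,\infty)$: for a compact $K\subset\Omega$, local compactness and the lower bound for ball measures coming from \eqref{eq:doublexp} show $A_ru\in L^\infty_{loc}(\Omega)$ for small $r$, whence $u=A_ru-r^2\Delta_ru\in L^\infty_{loc}(\Omega)\subset L^p_{loc}(\Omega)$, and $\|\Delta_ru\|_{L^p(K)}\le\mu(K)^{1/p}\|\Delta_ru\|_{L^\infty(K)}\to 0$ as $r\to 0$. Given $\beta<1$, choosing $p>Q/(1-\beta)$ makes $\beta<1-Q/p$, and the first part applies.

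The only step requiring a little care — the ``main obstacle'', such as it is — is the localization: making sure the local Haj\l asz gradient supplied by Theorem~\ref{thm:frachaj} becomes a genuine Haj\l asz gradient on a ball small enough that the distance restriction $d(x,y)<r_K$ in Definition~\ref{def:lochaj} is automatic, so that Proposition~\ref{prop:fracmorrey} can be invoked. Everything else is a direct combination of the two cited results.
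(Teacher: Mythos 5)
Your proof is correct and follows essentially the same route as the paper: Theorem~\ref{thm:frachaj} (via Proposition~\ref{prop:improve}) combined with the fractional Morrey embedding of Proposition~\ref{prop:fracmorrey}, then taking $\alpha$ close to $1$ and, for the last assertion, observing that strong amv-harmonicity gives membership in $\AMV^p_{loc}(\Omega)$ for all finite $p$. The extra care you take with the localization of the Haj\l asz gradient and with checking $u\in L^p_{loc}$ only fills in routine details the paper leaves implicit.
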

\begin{proof}
	Given $\beta<1-Q/p$, choose $\alpha<1$ which satisfies $\alpha p>Q$ and $\beta<\alpha- Q/p$. By Proposition \ref{prop:fracmorrey} $u$ is locally $(\alpha-Q/p)$-H\"older continuous, and thus locally $\beta$-H\"older continuous.
	
	If $u$ is strongly amv-harmonic on $\Omega$, then in particular $u\in \AMV_{loc}^p(\Omega)$ for every $p<\infty$. This finishes the proof.
\end{proof}

\section{Blow-ups of Haj\l asz--Sobolev functions with finite AMV-norm}\label{sec:blow-up}

To study blow-ups of functions with finite amv-norm, we first review pointed measured Gromov--Hausdorff convergence (pmGH-convergence) of spaces and functions. In the literature there are several variants of pmGH-convergence. Here we follow the presentation of \cite[Section 2]{keith03}, and refer the interested reader to \cite{keith03, HKST, SylvesterNages} for more discussion and the relationship between the various notions. As for the blow-ups there is a plethora of literature, see for instance~\cite{cheeg, he07, keith, kle-mack} and references therein.


\subsection{Pointed measured Gromov--Hausdorff convergence}
Let $F_m\subset Z$ for $m=1,2,\ldots$ and $F\subset Z$ be closed sets in a metric space $Z$. We say that a sequence $(F_m)$ \emph{Hausdorff-converges to} $F$, denoted $F_m\to F$, if
\begin{align*}
\lim_{m\to \infty}\sup_{z\in F_m\cap B_R(q)}\dist_Z(z,F)=0\ \textrm{ and }\
\lim_{m\to \infty}\sup_{z\in F\cap B_R(q)}\dist_Z(z,F_m)=0
\end{align*}
for every $q\in Z$ and $R>0$. If $\nu_m$, for $m=1,2,\ldots$ and $\nu$ are Radon measures on $Z$, we say that a sequence $(\nu_m)$ converges to $\nu$ weakly, denoted $\nu_m\rightharpoonup \nu$, if
\[
\lim_{m\to\infty}\int_Z\varphi \ud\nu_m=\int_Z\varphi \ud\nu
\]
for every continuous function $\varphi:Z\to \R$ with bounded support.
\begin{lemma}\label{lem:Hconv}
Let $(\nu_m)$ be a sequence of measures on $Z$ converging weakly to $\nu$, and suppose $(F_m)$ is a sequence of compact sets Hausdorff-converging to a compact set $F$. Then 
\[
\limsup_{m\to\infty}\nu_m(F_m)\le \nu(F)\le \liminf_{m\to\infty}\nu_m(N_\varepsilon(F_m))
\]
for any $\varepsilon>0$.
\end{lemma}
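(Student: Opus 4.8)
The plan is to prove the two inequalities separately using the definition of weak convergence of measures together with Urysohn-type cutoff functions adapted to the Hausdorff-converging sets.

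\textbf{Upper bound.} To show $\limsup_m \nu_m(F_m)\le \nu(F)$, fix $\varepsilon>0$ and a large radius $R$ so that $F\subset B_R(q)$ and $F_m\subset B_R(q)$ for all large $m$ (possible since the $F_m$ Hausdorff-converge to the compact set $F$, hence are eventually contained in a common bounded set). Choose a continuous cutoff $\varphi\colon Z\to[0,1]$ with bounded support, equal to $1$ on $\overline N_\varepsilon(F)$ and equal to $0$ outside $N_{2\varepsilon}(F)$ (e.g. $\varphi(z)=\max\{0,1-\varepsilon^{-1}\dist(z,\overline N_\varepsilon(F))\}$, truncated to have bounded support using a further cutoff supported in $B_{R+1}(q)$). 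By Hausdorff convergence there is $m_0$ such that $F_m\subset N_\varepsilon(F)$ for $m\ge m_0$, so $\nu_m(F_m)\le \int_Z\varphi\,\ud\nu_m$ for $m\ge m_0$. Letting $m\to\infty$ and using $\nu_m\rightharpoonup\nu$ gives $\limsup_m\nu_m(F_m)\le \int_Z\varphi\,\ud\nu\le \nu(N_{2\varepsilon}(F))$. Since $F$ is compact and $\nu$ is Radon, $\nu(N_{2\varepsilon}(F))\to\nu(F)$ as $\varepsilon\to 0$ by outer regularity / continuity from above of the measure along the decreasing family $\overline N_{2\varepsilon}(F)\downarrow F$; this yields the claim.

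\textbf{Lower bound.} To show $\nu(F)\le \liminf_m\nu_m(N_\varepsilon(F_m))$ for fixed $\varepsilon>0$, choose a continuous $\psi\colon Z\to[0,1]$ with bounded support, equal to $1$ on $F$ and equal to $0$ outside $N_{\varepsilon/2}(F)$. By Hausdorff convergence, for $m$ large every point of $F$ lies within $\varepsilon/2$ of $F_m$, so $\{\psi>0\}\subset N_{\varepsilon/2}(F)\subset N_\varepsilon(F_m)$, whence $\int_Z\psi\,\ud\nu_m\le \nu_m(N_\varepsilon(F_m))$. Taking $\liminf_m$ and using weak convergence, $\nu(F)\le\int_Z\psi\,\ud\nu=\lim_m\int_Z\psi\,\ud\nu_m\le\liminf_m\nu_m(N_\varepsilon(F_m))$.

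The only genuinely delicate point is bookkeeping the supports of the test functions so they remain bounded (which is required for weak convergence of Radon measures in the sense defined here) while still controlling all the $F_m$ simultaneously; this is handled by the uniform boundedness of $\{F_m\}$ coming from Hausdorff convergence to the compact set $F$, together with an extra fixed cutoff on a large ball $B_{R+1}(q)$ that does not affect the relevant sets for large $m$. Everything else is a routine application of Urysohn functions and the definitions, so I do not expect a real obstacle.
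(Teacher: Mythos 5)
Your proof is correct and takes essentially the same route as the paper: both arguments squeeze $\nu_m(F_m)$ and $\nu_m(N_\varepsilon(F_m))$ between $\varepsilon$-neighbourhoods of $F$ via the Hausdorff convergence and then invoke the upper/lower semicontinuity of weakly convergent measures on closed/open sets, the only difference being that you re-derive those portmanteau-type inequalities with explicit Lipschitz cutoffs where the paper simply cites \cite[Remark 11.4.1]{HKST}. One small caveat: your claim that the $F_m$ are eventually contained in a common bounded set (equivalently that $F_m\subset N_\varepsilon(F)$ for large $m$) does not literally follow from the paper's localized definition of Hausdorff convergence, but the paper's own proof asserts the very same containment, so this is a shared implicit assumption (satisfied in all applications) rather than a gap in your argument.
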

\begin{proof}
Given $\varepsilon>0$ there exists $m_0$ such that $F\subset N_\varepsilon(F_m)$ and $F_m\subset N_\varepsilon(F)$, whenever $m\ge m_0$. By \cite[Remark 11.4.1]{HKST} we obtain
\begin{align*}
\limsup_{m\to\infty}\nu_m(F_m)\le \limsup_{m\to\infty}\nu_m(\bar N_\varepsilon(F))\le \nu(\bar N_\varepsilon(F)).
\end{align*}
Taking infimum over $\varepsilon>0$ we  obtain the left-hand side inequality in the assertion. Similarly, to see the second inequality we use \cite[Remark 11.4.1]{HKST} and estimate
\begin{align*}
\nu(F)\le \nu(N_{\varepsilon}(F))\le \liminf_{m\to\infty}\nu_m(N_{\varepsilon}(F))\le \liminf_{m\to\infty}\nu_m(N_{2\varepsilon}(F_m)).
\end{align*}
\end{proof}
 
For the next definition, we recall that a pointed metric measure space $X=(X,d,\mu,p)$ consists of a metric measure space $(X,d,\mu)$ and a distinguished point $p\in X$. We consider only proper spaces here.
\begin{definition}\label{def:pmGH}
A sequence $(X_m)=(X_m,d_m,\mu_m,p_m)$ of pointed proper metric measure spaces \emph{pmGH-converges} to a pointed proper metric measure space $X=(X,d,\mu,p)$, denoted $X_m\stackrel{pmGH}{\longrightarrow} X$, if there exists a pointed proper metric space $(Z,q)$ and isometric embeddings $\iota_m:X_m\to Z$ for $m=1,2,\ldots$ and $\iota:X\to Z$ so that
\begin{itemize}
	\item[(1)] $\iota_m(p_m)=\iota(p)=q$ for all $m$, and $\iota_m(X_m)$ Hausdorff-converges to $\iota(X)$;
	\item[(2)] the measures $\iota_{m*}\mu_m$ weakly converge to $\iota_*\mu$, as $m\to
	\infty$. 
\end{itemize}
\end{definition}
We also define Gromov--Hausdorff convergence for sequences of functions. Note that in Definition \ref{def:fGH}, the measures play no role. Since we consider pointed measured spaces, we nevertheless include them (see also Definition \ref{def:wfGH}). 
\begin{definition}\label{def:fGH}
	Let $u_m:X_m\to \R$ for $m=1,2,\ldots$ and $u:X\to \R$ be continuous functions on pointed proper metric measure spaces. We say that the sequence $(u_m)$ \emph{Gromov--Hausdorff converges to} $u$, denoted $u_m\stackrel{GH}{\longrightarrow} u$,	if there are isometric embeddings $\iota_m:X_m\to Z$ $m=1,2,\ldots$ and $\iota:X\to Z$ satisfying (1) and (2) in Definition \ref{def:pmGH} and
	\begin{itemize}
		\item[(3)] $u_m(z_m)\to u(z)$ whenever $z_m\in X_m$, $z\in X$, and $\iota_m(z_m)\to \iota(z)$, as $m\to \infty$.
	\end{itemize}
\end{definition}

\bigskip\noindent The embeddings $\iota_m:X_m\to Z$ and $\iota:X\to Z$ satisfying (1) and (2) (resp. (3)) in Definition \ref{def:pmGH} are said to \emph{realize} the convergence $X_m\stackrel{pmGH}{\longrightarrow} X$ (resp. $u_m\stackrel{GH}{\longrightarrow} u$).

Two central properties of Gromov--Hausdorff convergence are its compactness properties (see Proposition \ref{prop:GHcpt} below) and the stability of many important properties in metric geometry and analysis under the Gromov--Hausdorff convergence. For our purposes, the stability of length spaces and the doubling property of the measure is important; for a detailed discussion see \cite[Section 11]{HKST} and \cite{keith03}. 

The following compactness property is proved in \cite[Proposition 3]{keith03}, see also \cite{HKST}. For a pointed metric space $(X,d,p)$ and $\varepsilon, R>0$, let $N_X(\varepsilon,R)$ denote the maximal number of disjoint closed balls of radius $\varepsilon$ inside a ball $B_R(p)$. We say that a sequence $(X_m,d_m,p_m)$ of pointed metric spaces is \emph{totally bounded} if
\[
\sup_m N_{X_m}(\varepsilon,R)<\infty
\]
for every $\varepsilon,R>0$.

\begin{prop}[cf. Proposition 3, \cite{keith}]\label{prop:GHcpt}
Let $(X_m)=(X_m,d_m,\mu_m,p_m)$ be a totally bounded sequence of proper metric measure spaces, satisfying
\begin{align}\label{eq:totbound}
\sup_m\mu_m(B_R(p_m))<\infty \quad \textrm{for every }R>0.
\end{align}
Then there exists a subsequence and a pointed proper metric measure space $X=(X,d,\mu,p)$ so that $X_m\stackrel{pmGH}{\longrightarrow} X$. 

\end{prop}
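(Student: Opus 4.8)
The plan is to split the argument into a purely metric precompactness step and a measure-theoretic compactness step. First I would invoke Gromov precompactness for pointed proper metric spaces: using the total boundedness hypothesis, for each $m$ one fixes a countable set dense in $X_m$ and containing $p_m$, whose cardinality inside $B_R(p_m)$ is bounded uniformly in $m$ for every $R$; a diagonal extraction then produces a subsequence along which all distances between corresponding points converge, and the limiting distance data defines (after passing to the metric quotient and completing) a pointed proper metric space $(X,d,p)$. One simultaneously realizes $X$ and every $X_m$ as isometric subsets of a single pointed proper metric space $(Z,q)$ via a Kuratowski-type embedding or by amalgamating the disjoint union with an admissible metric, arranging $\iota_m(p_m)=\iota(p)=q$ and $\iota_m(X_m)$ Hausdorff-converging to $\iota(X)$. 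This is precisely \cite[Proposition 3]{keith03} (see also \cite[Section 11]{HKST}), and it is the technical core of the statement.

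With $Z$ and the embeddings fixed, consider the pushforward measures $\nu_m:=\iota_{m*}\mu_m$, which are Radon measures on the proper (hence locally compact and second countable) space $Z$, concentrated on $\iota_m(X_m)$. Since $\iota_m$ is an isometric embedding taking $p_m$ to $q$, one has $B_R(q)\cap\iota_m(X_m)=\iota_m(B_R(p_m))$, so $\nu_m(B_R(q))=\mu_m(B_R(p_m))$ and the hypothesis gives $\sup_m\nu_m(B_R(q))<\infty$ for every $R$. By the weak-$*$ compactness of uniformly locally bounded Radon measures --- a Riesz representation / Banach--Alaoglu argument on an exhaustion of $Z$ by relatively compact open sets together with a diagonal extraction --- I would pass to a further subsequence so that $\nu_m\rightharpoonup\nu$ for some Radon measure $\nu$ on $Z$.

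Next I would verify that $\nu$ is carried by $\iota(X)$. If $\varphi\in C_c(Z)$ has support disjoint from the closed set $\iota(X)$, then $\spt\varphi$ lies at a positive distance from $\iota(X)$; choosing $R$ with $\spt\varphi\subset B_R(q)$ and using that $\iota_m(X_m)\cap B_R(q)$ is contained in a shrinking neighbourhood of $\iota(X)$, we get $\spt\varphi\cap\iota_m(X_m)=\varnothing$ for all large $m$, hence $\int_Z\varphi\,\ud\nu_m=0$ and therefore $\int_Z\varphi\,\ud\nu=0$. Thus $\spt\nu\subset\iota(X)$, and since $\iota$ is a homeomorphism onto its image, $\mu:=(\iota^{-1})_*\nu$ is a Radon measure on $X$ satisfying $\iota_{m*}\mu_m\rightharpoonup\iota_*\mu$. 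Finally $\iota(X)$ is closed in the proper space $Z$, so closed balls of $X$ are compact, i.e. $X$ is proper; putting these together gives $X_m\stackrel{pmGH}{\longrightarrow}X$ along the extracted subsequence.

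The main obstacle is the first step --- producing the common ambient space $Z$ with $\iota_m(X_m)\to\iota(X)$ in the Hausdorff sense --- which, while classical, carries the substantive content; the measure extraction and the support identification are comparatively routine applications of weak-$*$ compactness. A minor subtlety is that the limit $\mu$ may in principle vanish on some balls, so that $(X,d,\mu)$ need not satisfy the nontriviality convention of Section \ref{subs-notation} verbatim; this can be absorbed either by working with locally finite Radon measures throughout or, in the applications in Section \ref{sec:blow-up}, by noting that the rescaled measures carry a uniform lower mass bound inherited from the doubling property.
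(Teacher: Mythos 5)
Your argument is correct and is exactly the standard two-step proof (Gromov precompactness of the underlying pointed proper spaces realized in a common ambient space $Z$, followed by weak-$*$ extraction of the pushforward measures and identification of the support of the limit), which is the content of the reference \cite[Proposition 3]{keith03} that the paper cites in lieu of a proof. Your closing remark about the limit measure possibly vanishing on balls is a genuine subtlety that the paper glosses over, and your resolution of it is appropriate.
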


There is also a compactness result for sequences of functions on pointed spaces, which can be proved using Proposition \ref{prop:GHcpt} and a diagonal argument, as in the proof of the Arzela--Ascoli theorem. Under a different notion of convergence (which is equivalent to ours under the hypotheses there), Proposition \ref{prop:fGHcpt} appears in \cite{keith}. For the additional statement (2), see \cite[Section 11.5]{HKST} and \cite{keith03}.

To state the result, let $(X_m)=(X_m,d_m,\mu_m,p_m)$ be a sequence of pointed proper metric measure spaces. We say that a sequence $(f_m)$ of functions $f_m:X_m\to \R$ for $m=1,2,\ldots$ is \emph{equicontinuous} if, for every $\varepsilon,R>0$, there exists $\delta>0$ such that if $x_m,y_m\in B_R(p_m)$ satisfy $d(x_m,y_m)<\delta$, then $|f_m(x_m)-f_m(y_m)|<\varepsilon$ uniformly in $m$.
\begin{prop}\label{prop:fGHcpt}
	Let $(X_m)=(X_m,d_m,\mu_m,p_m)$ be a totally bounded sequence of pointed proper metric measure spaces satisfying \eqref{eq:totbound}. If a sequence of functions $(f_m)$, where $f_m:X_m\to \R$ for $m=1, 2,\ldots$, is an equicontinuous sequence of functions, for which
	\[
	\sup_m|f_m(p_m)|<\infty,
	\]
	then there exists  a proper pointed metric measure space $X$ and a continuous function $f:X\to \R$ such that, up to extracting a subsequence,  $f_m\stackrel{GH}{\longrightarrow} f$. 
	
	Moreover,
	\begin{itemize}
		\item[(1)] if each $f_m$ is $L$-Lipschitz, then $f$ is $L$-Lipschitz;
		\item[(2)] if each $X_m$ is a doubling length space with doubling constants not exceeding a positive constant $C$, then $X$ is a length space with doubling constant not exceeding $C$.
	\end{itemize}
\end{prop}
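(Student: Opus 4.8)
The plan is to run the Arzela--Ascoli argument, producing the limit space by means of Proposition~\ref{prop:GHcpt} and then extracting a limit function by a diagonal argument. First I would apply Proposition~\ref{prop:GHcpt} to the sequence $(X_m)$ (which is totally bounded and has uniformly bounded mass on balls) to extract a subsequence (not relabelled) and a pointed proper metric measure space $X=(X,d,\mu,p)$ with $X_m\stackrel{pmGH}{\longrightarrow}X$; by Definition~\ref{def:pmGH} this convergence is realized by a proper pointed metric space $(Z,q)$ and isometric embeddings $\iota_m\colon X_m\to Z$, $\iota\colon X\to Z$. Identifying each $X_m$ and $X$ with its image in $Z$ turns $f_m$ into a function on the closed set $\iota_m(X_m)\subset Z$. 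Since $(X,d)$ is proper it is separable; I would fix a countable dense subset $\{x_j\}_{j\in\N}\subset X$ and, using that $\iota_m(X_m)$ Hausdorff--converges to $\iota(X)$, choose points $x_j^m\in X_m$ with $\iota_m(x_j^m)\to\iota(x_j)$ in $Z$ for every $j$.

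Next I would carry out the diagonal extraction. For each fixed $j$, $d_m(p_m,x_j^m)=d_Z(q,\iota_m(x_j^m))\to d_Z(q,\iota(x_j))$, so $x_j^m\in B_R(p_m)$ for all large $m$ with $R$ independent of $m$; chaining $x_j^m$ to $p_m$ inside a fixed ball around $p_m$ and using the equicontinuity of $(f_m)$ (uniform in $m$) together with $\sup_m|f_m(p_m)|<\infty$, the sequence $(f_m(x_j^m))_m$ is bounded, exactly as in the classical Arzela--Ascoli theorem. A diagonal argument then yields a further subsequence along which $f_m(x_j^m)$ converges to some $v_j\in\R$ for every $j$. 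Setting $f(x_j):=v_j$, the equicontinuity passes to the limit, so $f$ is uniformly continuous on bounded subsets of $\{x_j\}$ and extends uniquely to a continuous function $f\colon X\to\R$.

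The remaining --- and, I expect, main --- step is to verify that $f_m\stackrel{GH}{\longrightarrow}f$ in the sense of Definition~\ref{def:fGH}, i.e. that $f_m(z_m)\to f(z)$ whenever $z_m\in X_m$, $z\in X$ and $\iota_m(z_m)\to\iota(z)$. Given such $z$ and $\varepsilon>0$, I would pick a dense point $x_j$ close to $z$, note that $\iota_m(x_j^m)$ is then close to $\iota_m(z_m)$ for large $m$ (so $d_m(x_j^m,z_m)$ is small), and bound $|f_m(z_m)-f(z)|$ by the three terms $|f_m(z_m)-f_m(x_j^m)|$, $|f_m(x_j^m)-v_j|$ and $|v_j-f(z)|$: the first is controlled by equicontinuity \emph{uniformly in $m$}, the second by the diagonal convergence, the third by continuity of $f$. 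The same estimate shows $f$ is independent of the auxiliary choices. The whole argument hinges on the equicontinuity being uniform in $m$ and on transporting it faithfully through the moving embeddings $\iota_m$, and this bookkeeping is where the one genuine difficulty lies.

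Finally I would treat the two additional assertions. For (1), if each $f_m$ is $L$-Lipschitz and $x,y\in X$, choose $x_m,y_m\in X_m$ with $\iota_m(x_m)\to\iota(x)$ and $\iota_m(y_m)\to\iota(y)$; then $d_m(x_m,y_m)\to d(x,y)$ while $f_m(x_m)\to f(x)$, $f_m(y_m)\to f(y)$ by property (3), whence $|f(x)-f(y)|\le L\,d(x,y)$. For (2), the fact that a pmGH-limit of length spaces is a length space is standard, see \cite[Section 11]{HKST} and \cite{keith03}; for the doubling bound, fix $x\in X$, $r>0$ and $x_m\in X_m$ with $\iota_m(x_m)\to\iota(x)$, and use Lemma~\ref{lem:Hconv} together with the weak convergence $\iota_{m*}\mu_m\rightharpoonup\iota_*\mu$ to get, for every small $\varepsilon>0$,
\[
\mu(B_{2r}(x))\le\liminf_m\mu_m\big(B_{2r+\varepsilon}(x_m)\big)\qquad\text{and}\qquad\limsup_m\mu_m\big(\overline B_{r/2+\varepsilon}(x_m)\big)\le\mu(B_r(x)).
\]
Applying the doubling inequality of $X_m$ twice, $\mu_m(B_{2r+\varepsilon}(x_m))\le C\,\mu_m(B_{r+\varepsilon/2}(x_m))\le C^2\,\mu_m(B_{r/2+\varepsilon/4}(x_m))\le C^2\,\mu_m(\overline B_{r/2+\varepsilon}(x_m))$, and letting $\varepsilon\to0$ gives $\mu(B_{2r}(x))\le C^2\mu(B_r(x))$ --- the loss from $C$ to $C^2$ coming precisely from the two halvings needed to bridge the radii $2r+\varepsilon$ and $r/2+\varepsilon$.
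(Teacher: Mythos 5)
Your proposal is correct and is essentially the argument the paper intends: the paper does not write out a proof of Proposition~\ref{prop:fGHcpt}, stating only that it "can be proved using Proposition~\ref{prop:GHcpt} and a diagonal argument, as in the proof of the Arzela--Ascoli theorem" (citing \cite{keith} for the statement and \cite[Section 11]{HKST}, \cite{keith03} for part (2)), and your write-up carries out exactly that scheme --- pmGH-limit via Proposition~\ref{prop:GHcpt}, diagonal extraction over a dense set transported into the $X_m$, a three-term estimate for condition (3), and the limiting arguments for (1) and (2). The only point worth flagging is that your step bounding $f_m(x_j^m)$ by ``chaining'' to $p_m$ implicitly assumes the $X_m$ are chain-connected at small scales (as length spaces are); this is an implicit hypothesis of the proposition itself rather than a defect of your argument, and in the Lipschitz case (1), where the paper actually applies the result, boundedness is immediate from $|f_m(x_j^m)-f_m(p_m)|\le L\,d_m(x_j^m,p_m)$.
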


To study tangents of Haj\l asz--Sobolev functions, we also consider a notion of weak convergence for functions. The following definition is a slight modification of the weak convergence in \cite{SylvesterNages}.

\begin{definition}\label{def:wfGH}
	Let $(X_m)=(X_m,d_m,\mu_m,p_m)$ and $X=(X,d,\mu,p)$ be pointed proper metric measure spaces. A sequence $(u_m)$ of Borel functions $u_m\in L^1_{loc}(X_m)$ converges \emph{weakly} to a Borel function $u\in L^1_{loc}(X)$, denoted $u_m\stackrel{GH}{\rightharpoonup} u$, if there exist isometric embeddings $\iota_m:X_m\to Z$, $\iota:X\to Z$ satisfying (1) and (2) in Definition \ref{def:pmGH}, and for which
	\begin{itemize}
		\item[(3')] $\iota_{m*}((u_m)_+\ud\mu_m)\rightharpoonup \iota_*(u_+\ud\mu)$ and $\iota_{m*}((u_m)_-\ud\mu_m)\rightharpoonup \iota_*(u_-\ud\mu)$, as $m\to \infty$.
	\end{itemize}
Here, for a function $f:Z\to \R$ we denote
\[
f_+=\max\{ f,0 \}\textrm{ and } f_-=-\min\{f,0 \}.
\]
\end{definition}

The Gromov-Hausdorff convergence of functions is analogous to the uniform convergence on compact sets and indeed coincides with this notion if $X_m=X=Z$ for all $m$. The weak convergence of functions as in Definition~\ref{def:wfGH} corresponds to weak convergence of signed measures. In keeping with these analogies, we indeed have the natural implication between the two notions.

\begin{lemma}\label{lem:uniftoweak}
	Let metric measure spaces $(X_m)$ and $X$ be as in Definition~\ref{def:wfGH}. Suppose further, that a sequence $(u_m)$ of Borel functions $u_m:X_m \to \R$ Gromov--Hausdorff converges to a continuous function $u:X\to \R$. Then $(u_m)$ converges to $u$ weakly.
	
	More precisely, if $\iota_m:X_m\to Z$, $\iota:X\to Z$ realize the convergence $u_m\stackrel{GH}{\longrightarrow} u$, then \[
	\lim_{m\to \infty}\int_{X_m}(\varphi\circ\iota_m) (u_m)_\pm \ud\mu_m=\int_X(\varphi\circ\iota) u_\pm\ \ud\mu.
	\]
	for any boundedly supported continuous $\varphi:Z\to \R$.
\end{lemma}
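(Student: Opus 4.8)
The statement asserts that Gromov--Hausdorff convergence of functions implies weak Gromov--Hausdorff convergence, with the precise asymptotics for the positive and negative parts. The plan is to fix isometric embeddings $\iota_m:X_m\to Z$ and $\iota:X\to Z$ realizing $u_m\stackrel{GH}{\longrightarrow}u$, and to reduce the whole statement to showing that, for every boundedly supported continuous $\varphi:Z\to\R$,
\[
\lim_{m\to\infty}\int_{X_m}(\varphi\circ\iota_m)\,(u_m)_\pm\,\ud\mu_m=\int_X(\varphi\circ\iota)\,u_\pm\,\ud\mu.
\]
Indeed, once this is established the pushforward measures $\iota_{m*}((u_m)_\pm\ud\mu_m)$ converge weakly to $\iota_*(u_\pm\ud\mu)$ by definition, which is exactly condition (3') in Definition \ref{def:wfGH}, so $u_m\stackrel{GH}{\rightharpoonup}u$.

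\textbf{Key steps.} First I would write $(\varphi\circ\iota_m)(u_m)_\pm = [(\varphi\circ\iota_m)\cdot\psi\circ\iota_m]\cdot[(u_m)_\pm/\psi\circ\iota_m]$ is not the right move; instead the clean approach is to note that $(u_m)_+ = \tfrac12(|u_m|+u_m)$ and $(u_m)_- = \tfrac12(|u_m|-u_m)$, so it suffices to prove the convergence statement with $(u_m)_\pm$ replaced by $u_m$ itself and by $|u_m|$. For $u_m$: since $\varphi$ has bounded support, say inside $B_R(q)$, and $\iota_m(X_m)$ Hausdorff-converges to $\iota(X)$, the functions $\varphi\circ\iota_m$ are supported in a fixed ball, uniformly bounded, and $\varphi\circ\iota_m$ converges to $\varphi\circ\iota$ in the appropriate Gromov--Hausdorff sense (continuity of $\varphi$ plus (3) applied to the convergence of points). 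The product of a uniformly bounded sequence converging in this sense with $u_m$ (which is continuous and locally uniformly bounded on the relevant ball, since $u_m\stackrel{GH}{\longrightarrow}u$ and $u$ is continuous hence bounded on compacta) again GH-converges to $(\varphi\circ\iota)u$; combined with the weak convergence $\iota_{m*}\mu_m\rightharpoonup\iota_*\mu$ from (2), one concludes $\int(\varphi\circ\iota_m)u_m\,\ud\mu_m\to\int(\varphi\circ\iota)u\,\ud\mu$. The same argument applies verbatim with $|u_m|$ in place of $u_m$, since $|u_m|\stackrel{GH}{\longrightarrow}|u|$ and $|u|$ is continuous. Adding and subtracting gives the result for $(u_m)_\pm$.

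\textbf{Main obstacle.} The genuine technical point is making rigorous the statement ``a uniformly bounded, boundedly supported continuous sequence GH-converging to $g$, integrated against measures converging weakly, converges to $\int g\,\ud\mu$.'' Weak convergence of measures as in Definition \ref{def:pmGH} is stated against \emph{fixed} continuous functions $\varphi:Z\to\R$ with bounded support, whereas here the integrand $\varphi\circ\iota_m\cdot u_m$ changes with $m$ (it is only defined on $\iota_m(X_m)$). The standard device is to extend each $\varphi\circ\iota_m\cdot u_m$ to a uniformly continuous, uniformly bounded, uniformly compactly supported function $h_m$ on all of $Z$ (using a McShane-type extension and a cutoff), observe via the GH-convergence hypothesis (3) together with uniform continuity that $h_m\to h:=$ (extension of $(\varphi\circ\iota)u$) uniformly on bounded subsets of $Z$, and then split
\[
\left|\int_Z h_m\,\ud(\iota_{m*}\mu_m)-\int_Z h\,\ud(\iota_*\mu)\right|
\le \|h_m-h\|_{\infty}\,\sup_m\mu_m(B_R(p_m))+\left|\int_Z h\,\ud(\iota_{m*}\mu_m)-\int_Z h\,\ud(\iota_*\mu)\right|,
\]
where the first term vanishes by uniform convergence (the measures of the fixed ball being uniformly bounded by (2)) and the second by weak convergence against the fixed function $h$. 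Care is needed that $\int_Z h_m\,\ud(\iota_{m*}\mu_m)=\int_{X_m}(\varphi\circ\iota_m)u_m\,\ud\mu_m$, which holds because $h_m$ agrees with $(\varphi\circ\iota_m)u_m$ on $\iota_m(X_m)$, the support of $\iota_{m*}\mu_m$. The rest is bookkeeping.
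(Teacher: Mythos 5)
Your proposal is correct and follows essentially the same route as the paper: reduce to the positive/negative parts (the paper uses $(u_m)_\pm\stackrel{GH}{\longrightarrow}u_\pm$ directly rather than your $|u_m|$ decomposition, but this is equivalent), extend the \emph{limit} function continuously to $Z$, and split the difference of integrals into a sup-norm-times-mass term plus a term handled by weak convergence of $\iota_{m*}\mu_m$ against a fixed test function. Two small points: the McShane/Tietze extension of the moving functions $(\varphi\circ\iota_m)u_m$ is both unnecessary and problematic (the $u_m$ are only $L^1_{loc}$, hence need not admit continuous extensions), but harmless since $\iota_{m*}\mu_m$ is supported on $\iota_m(X_m)$, where only the original values matter; and the key claim that $\sup_{\iota_m(X_m)\cap B}|u_m\circ\iota_m^{-1}-\tilde u|\to 0$ does not follow from any ``uniform continuity'' of the $u_m$ (none is available), but rather from a compactness/contradiction argument --- if it failed one extracts points $z_m=\iota_m(x_m)$ in the compact ball $B$ converging to some $z\in\iota(X)$ and contradicts condition (3) together with the continuity of the extension $\tilde u$ --- which is exactly how the paper closes the argument.
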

\begin{proof}
	It is easy to see that if $u_m\stackrel{GH}{\longrightarrow} u$, then $(u_m)_\pm\stackrel{GH}{\longrightarrow} u_\pm$, and the embeddings realizing the first convergence also realize the latter convergence. Thus we may assume that $u_m$ is non-negative for all $m$, and the embeddings $\iota_m:X_m\to Z,\ \iota:X\to Z$ realize the convergence $u_m\stackrel{GH}{\longrightarrow} u$. It suffices to show that $\iota_{m*}(u_m\ud\mu)\rightharpoonup \iota_*(u\ud\mu)$.
	
Let $\tilde u:Z\to \R$ be a continuous extension of $u\circ\iota^{-1}:\iota(X)\to \R$, and set 
$$
 \tilde u_m= u_m\circ\iota_m^{-1}|_{\iota_m(X_m)}.
$$
 Given any continuous $\varphi:Z\to \R$ with bounded (thus compact) support, we have
	\begin{align*}
	\int_{X_m}(\varphi\circ\iota_m) u_m \ud\mu_m-\int_X(\varphi\circ\iota) u\ \ud\mu=\int_Z\varphi\ (\tilde u_m-\tilde u) \ \iota_{m\ast}(\ud\mu_m)+\int_Z\varphi\ \tilde u\ \iota_{m\ast}(\ud\mu_m)-\int_Z\varphi\ \tilde u\ \iota_*(\ud\mu).
	\end{align*}
	Since $\iota_{m*}(\ud\mu_m)\rightharpoonup \iota_*(\ud\mu)$, it suffices to prove that
	\begin{align*}
	\int_Z\varphi\ (\tilde u_m-\tilde u) \ \iota_{m\ast}(\ud\mu_m)\to 0.
	\end{align*}
	If $B\subset Z$ is a closed ball containing the support of $\varphi$, we obtain
	\begin{align*}
	\limsup_{m\to\infty}\left|\int_Z\varphi\ (\tilde u_m-\tilde u) \ \iota_{m\ast}(\ud\mu_m)\right|\le &\limsup_{m\to\infty} \|\varphi\|_{L^\infty(B)}\mu_m(\iota_m^{-1}B)\|\tilde u_m-\tilde u\|_{L^{\infty}(\iota_m(X_m)\cap B)}\\
	\le & \|\varphi\|_{L^{\infty}(B)}\mu(\iota^{-1}B)\limsup_{m\to\infty}\|\tilde u_m-\tilde u\|_{L^{\infty}(\iota_m(X_m)\cap B)}.
	\end{align*}
	Suppose $\limsup_{m\to\infty}\|\tilde u_m-\tilde u\|_{L^{\infty}(\iota_m(X_m)\cap B)}> \varepsilon_0$ for some $\varepsilon_0>0$. Then, there is a sequence $(x_m)$ such that $x_m\in X_m$
	with the corresponding sequence $(z_m)$ defined $z_m:= \iota_m(x_m)\in B$ and
	\[
	|\tilde u_m(z_m)-\tilde u(z_m)|=|u_m(x_m)-\tilde u(\iota_m(x_m))|>\varepsilon_0
	\]
	for $m$ large enough. Since $B$ is compact, a subsequence of $(z_m)$, denoted the same for simplicity, satisfies $z_m\to z\in \iota(X)$ for some $z$, as $m\to \infty$. By the Gromov--Haudorff convergence $u_m\stackrel{GH}{\longrightarrow} u$ and the continuity of $\tilde u$, we obtain
	\begin{align*}
	\lim_{m\to\infty}|u_m(x_m)-\tilde u(\iota_m(x_m))|=|u(\iota^{-1}(z))-\tilde u(z)|=0, 
	\end{align*}
	which is a contradiction. This completes the proof.
\end{proof}

\subsection{Gromov--Hausdorff convergence and averaging operators}

In this section we show the stability of operators $A_r$ and $A_r^*$ under the Gromov-Hausdorff convergence.
 
\begin{prop}\label{prop:intconv}
Let $(X_m)=(X_m,d_m,\mu_m,p_m)$ and $X=(X,d,\mu,p)$ be proper locally doubling length spaces. Suppose the sequence $(u_m)$ of Borel functions $u_m\in L_{loc}^1(X_m)$ converges weakly to a Borel function $u\in L^1_{loc}(X)$, and let $\iota_m:X_m\to Z$ for $m=1,2,\ldots$ and $\iota:X\to Z$ realize this convergence. If $z_m\in X_m$ for $m=1,2,\ldots$ and $z\in X$ are such that $\iota_m(z_m)\to \iota(z)$, as $m\to \infty$, then
\[
\lim_{m\to\infty}\int_{B_r(z_m)}u_m\ud\mu_m=\int_{B_r(z)}u\ud\mu
\]
for any $r>0$.
\end{prop}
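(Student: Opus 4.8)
The plan is to push the entire problem forward into the common target space $Z$ and recognise the two integrals as masses of weakly converging Radon measures, evaluated on balls whose centres move slowly. Let $\iota_m,\iota$ be embeddings realising $u_m\stackrel{GH}{\rightharpoonup}u$. Condition~(3$'$) of Definition~\ref{def:wfGH} says precisely that the Radon measures $\nu_m^{\pm}:=\iota_{m*}\big((u_m)_\pm\,\mu_m\big)$ converge weakly on $Z$ to $\nu^{\pm}:=\iota_*\big(u_\pm\,\mu\big)$; these are locally finite, since $u_m,u\in L^1_{loc}$ and $\iota_m,\iota$ are proper (closed) isometric embeddings between proper spaces, so preimages of compacta are compact. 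Writing $q_m:=\iota_m(z_m)$ and $q_0:=\iota(z)$, the isometry of $\iota_m$ together with the fact that $\nu_m^\pm$ is carried by $\iota_m(X_m)$ gives $\int_{B_r(z_m)}(u_m)_\pm\,\ud\mu_m=\nu_m^\pm\big(B^Z_r(q_m)\big)$ and the analogous identity on the limit side. Since $q_m\to q_0$ by hypothesis, and $\int_{B_r(z_m)}u_m\,\ud\mu_m=\nu_m^+(B^Z_r(q_m))-\nu_m^-(B^Z_r(q_m))$, the proposition reduces to the following abstract statement: if $\lambda_m\rightharpoonup\lambda$ weakly as Radon measures on a proper space $Z$ and $q_m\to q_0$ in $Z$, then $\lambda_m(B^Z_r(q_m))\to\lambda(B^Z_r(q_0))$ for every $r$ with $\lambda(\{w:\ d_Z(w,q_0)=r\})=0$.

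For this abstract statement I would use an $\varepsilon$-sandwich to absorb the moving centre. By the triangle inequality, $B^Z_{r-\varepsilon}(q_0)\subset B^Z_r(q_m)\subset \bar B^Z_{r+\varepsilon}(q_0)$ for every $\varepsilon>0$ and all large $m$. Applying the portmanteau inequalities for weak convergence of Radon measures on the proper space $Z$ — namely $\liminf_m\lambda_m(U)\ge\lambda(U)$ for open $U$ with compact closure and $\limsup_m\lambda_m(K)\le\lambda(K)$ for compact $K$, both obtained by monotone approximation of $\mathbf 1_U,\mathbf 1_K$ by functions in $C_c(Z)$, the closed balls being compact because $Z$ is proper (the same reasoning as in Lemma~\ref{lem:Hconv}) — I obtain
\[
\lambda\big(B^Z_{r-\varepsilon}(q_0)\big)\le\liminf_m\lambda_m\big(B^Z_r(q_m)\big)\le\limsup_m\lambda_m\big(B^Z_r(q_m)\big)\le\lambda\big(\bar B^Z_{r+\varepsilon}(q_0)\big).
\]
Letting $\varepsilon\downarrow 0$, using that $\lambda$ is finite on the compact set $\bar B^Z_{r+1}(q_0)$ and that $B^Z_{r-\varepsilon}(q_0)\uparrow B^Z_r(q_0)$, $\bar B^Z_{r+\varepsilon}(q_0)\downarrow \bar B^Z_r(q_0)$, I get $\lambda(B^Z_r(q_0))\le\liminf_m\lambda_m(B^Z_r(q_m))\le\limsup_m\lambda_m(B^Z_r(q_m))\le\lambda(\bar B^Z_r(q_0))$, which collapses to the desired equality exactly when $\lambda(\bar B^Z_r(q_0))=\lambda(B^Z_r(q_0))$. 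Applying this to $\lambda_m=\nu_m^{\pm}$ and subtracting then yields $\int_{B_r(z_m)}u_m\,\ud\mu_m\to\int_{B_r(z)}u\,\ud\mu$.

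The one genuinely delicate point — and where I expect the real work to sit — is the spherical shell $\{x:\ d(x,z)=r\}$. Translating back through $\iota$, the gap in the last display equals $\int_{\{x:\,d(x,z)=r\}}u\,\ud\mu$, which vanishes as soon as the metric sphere of radius $r$ about $z$ is $\mu$-null; by Fubini, $\int_0^\infty\mu(\{x:\,d(x,z)=s\})\,\ud s=0$, so this holds for all but at most countably many $r$, and for those radii the argument is complete. For the remaining countably many radii the two-sided estimate above is still valid; upgrading it to equality for \emph{every} $r$ would require the extra information that metric spheres carry no $\mu$-mass, which is not automatic. (Note that only properness of the spaces is used here; the doubling and length hypotheses enter the surrounding development rather than this particular estimate.)
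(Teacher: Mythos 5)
Your strategy coincides with the paper's: the paper runs the same $\varepsilon$-sandwich, phrased through Hausdorff convergence of the sets $\iota_m(B_r(z_m))$ and Lemma~\ref{lem:Hconv} instead of your portmanteau inequalities for the pushforward measures, and it lands on exactly your two-sided estimate
\[
\nu\bigl(B_r(z)\bigr)\le\liminf_{m\to\infty}\nu_m\bigl(B_r(z_m)\bigr)\le\limsup_{m\to\infty}\nu_m\bigl(\bar B_r(z_m)\bigr)\le\nu\bigl(\bar B_r(z)\bigr),\qquad \nu_m=u_m\,\ud\mu_m,\ \nu=u\,\ud\mu,
\]
after reducing to $u\ge 0$. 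The genuine gap is the one you flag in your last paragraph, and the parenthetical remark there is exactly backwards: the locally doubling and length-space hypotheses are not peripheral to this estimate — they are what close it for \emph{every} $r>0$. A locally doubling measure on a length space satisfies an $\alpha$-annular decay property (Buckley, \cite[Corollary 2.2]{buc}; see the discussion after Definition~\ref{defn-an-dec}), hence $\mu\bigl(\bar B_{r+\varepsilon}(z)\setminus B_r(z)\bigr)\to 0$ as $\varepsilon\to 0$, so every metric sphere $\{x:\,d(x,z)=r\}$ is $\mu$-null; it is then $\nu^{\pm}$-null by absolute continuity of $u_\pm\,\ud\mu$ with respect to $\mu$. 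This annihilates the defect $\nu(\bar B_r(z))-\nu(B_r(z))$ for all $r$ and all $z$, and finishes the proof. This is precisely the step the paper performs by bounding $\nu_m(A_{r,r+\varepsilon}(z_m))$ and letting $\varepsilon\to 0$ via annular decay.

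As written, your argument proves the proposition only for $r$ outside a countable exceptional set depending on $z$ (and on $u$). That is strictly weaker than the statement and insufficient for its later use: Corollary~\ref{cor:rdeltaconv} needs the convergence for a \emph{fixed} $r$ and \emph{all} converging sequences of centres simultaneously, and the union over uncountably many centres of the exceptional radius sets need not be small. So you must invoke the annular-decay consequence of the standing hypotheses (or some other argument showing metric spheres carry no $\mu$-mass) to obtain the result as stated.
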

\begin{proof}
By considering functions $u_{m\pm}$ for $m\geq 1$, we may assume without the loss of generality that all the functions $u_m$ and $u$ are non-negative. We note that
\begin{align}\label{eq:ballconv}
\iota_m(B_r(z_m))\to \iota(B_r(z)),\quad m\to \infty
\end{align}
for every $r>0$ in the sense of Hausdorff-convergence. Indeed, given $q\in Z$, $R>0$ and arbitrary $\varepsilon>0$ the convergence $\iota_m(X_m)\to \iota(X)$ and $\iota_m(z_m)\to \iota(z)$ for $m\to \infty$ imply that, for large enough $m$
\begin{align*}
B_R(q)\cap \iota_m(B_r(z_m))=B_R(p)\cap \iota_m(X_m)\cap B_r(\iota_m(z_m))\subset N_\varepsilon(\iota(X))\cap B_{r+\varepsilon}(\iota(z))=N_\varepsilon(\iota(B_r(z)))
\end{align*}
and 
\begin{align*}
B_R(q)\cap \iota(B_r(z))=B_R(p)\cap \iota(X)\cap B_r(\iota(z)))\subset N_\varepsilon(\iota_m(X_m))\cap B_{r+\varepsilon}(\iota_m(z_m))=N_\varepsilon(\iota(B_r(z_m))).
\end{align*}
 The convergence \eqref{eq:ballconv} follows. Note that the property of being a length space was used in the last equality, cf. \cite[Lemma 11.3.10]{HKST}. Denote $\ud\nu=(u\circ\iota^{-1})\ud(\iota_*\mu)$ and $\nu_m=(u_m\circ\iota^{-1})\ud(\iota_{m*}\mu_m)$. By assumption, $\nu_m$ converges weakly to $\nu$ and, by Lemma \ref{lem:Hconv}, we have 
\begin{align*}
\limsup_{m\to\infty}\nu_m(\bar B_r(z_m))\le \nu(\bar B_r(z))\le \liminf_{m\to\infty}\nu_m(B_{\varepsilon+r}(z_m))
\end{align*}
for any $\varepsilon>0$. Recall from Section~\ref{subs:loc-lip} notation $A_{r,R}(x)=\bar B_R(x)\setminus B_r(x)$ for $x\in Z$ and $r\leq R$. It holds that
\begin{align*}
\liminf_{m\to\infty}\nu_m(B_{\varepsilon+r}(z_m))&=\liminf_{m\to\infty}[\nu_m(\bar B_{r}(z_m))+\nu_m(A_{r,r+\varepsilon}(z_m))]\\
&\le \liminf_{m\to\infty}\nu_m(B_{r}(z_m))+\limsup_{m\to\infty}\nu_m(A_{r,r+\varepsilon}(z_m)).
\end{align*}
The argument used to establish \eqref{eq:ballconv} also yields that
\[
A_{r,r+\varepsilon}(z_m)\to A_{r,r+\varepsilon}(z)
\]
in the sense of Hausdorff-convergence. Applying Lemma \ref{lem:Hconv} once more we obtain
\[
\limsup_{m\to\infty}\nu_m(\bar B_r(z_m))\le \nu(\bar B_r(z))\le \liminf_{m\to\infty}\nu_m(B_{r}(z_m))+\nu(A_{r,r+\varepsilon}(z)).
\]
Since $\mu$ is a locally doubling measure on a length space, it has an annular decay property (see the discussion after Definition~\ref{defn-an-dec}). Therefore, $\lim_{\varepsilon\to 0}\mu(A_{r,r+\varepsilon}(z))=0$ which, by the absolute continuity of $\nu$ with respect to $\mu$, implies $\lim_{\varepsilon\to 0}\nu(A_{r,r+\varepsilon}(z))=0$. We have obtained 
\[
\limsup_{m\to\infty}\nu_m(\bar B_r(z_m))\le \nu(\bar B_r(z))\le \liminf_{m\to\infty}\nu_m(\bar B_{r}(z_m)),
\]
which completes the proof.
\end{proof}

Proposition \ref{prop:intconv} has the following immediate corollary.

\begin{corol}\label{cor:rdeltaconv}
Let $(X_m)=(X_m,d_m,\mu_m,p_m)$ and $X=(X,d,\mu,p)$ be proper locally doubling length spaces, and $(u_m)$ be a sequence of Borel functions $u_m\in L_{loc}^1(X_m)$ converging weakly to a Borel function $u\in L^1_{loc}(X)$. Then
\begin{itemize}
	\item[(a)] $A_r^{\mu_m}u_m\stackrel{GH}{\longrightarrow} A_r^\mu u$, and 
	\item[(b)] $(A_r^{\mu_m})^*u_m\stackrel{GH}{\longrightarrow} (A_r^{\mu})^* u$
\end{itemize}
for each $r>0$. In particular, if $u$ is continuous, then 
\[
\Delta_r^{\mu_m}u_m\stackrel{GH}{\rightharpoonup}\Delta_r^\mu u\quad \textrm{and}\quad (\Delta_r^{\mu_m})^*u_m\stackrel{GH}{\rightharpoonup}(\Delta_r^{\mu})^*u.
\]

\end{corol}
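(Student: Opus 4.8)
The plan is to deduce (a) and (b) from Proposition~\ref{prop:intconv}, applied successively to the original sequence $(u_m)$, to the constant functions, and to a reweighted sequence; the ``in particular'' part will then follow from Lemma~\ref{lem:uniftoweak}. Throughout, fix isometric embeddings $\iota_m\colon X_m\to Z$ and $\iota\colon X\to Z$ realizing the weak convergence $u_m\stackrel{GH}{\rightharpoonup}u$. In particular $\iota_{m*}\mu_m\rightharpoonup\iota_*\mu$, so the constant functions $\mathbf 1_{X_m}$ converge weakly to $\mathbf 1_X$ with respect to these embeddings, and Proposition~\ref{prop:intconv} applies to them.

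For (a): by Definition~\ref{def:fGH} it suffices to check that $A_r^{\mu_m}u_m(z_m)\to A_r^\mu u(z)$ whenever $z_m\in X_m$, $z\in X$ and $\iota_m(z_m)\to\iota(z)$. Applying Proposition~\ref{prop:intconv} to $(u_m)$ and to the constants yields $\int_{B_r(z_m)}u_m\,\ud\mu_m\to\int_{B_r(z)}u\,\ud\mu$ and $\mu_m(B_r(z_m))\to\mu(B_r(z))$; since $0<\mu(B_r(z))<\infty$, the quotients converge, which is (a).

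For (b): write $\phi_m:=1/\mu_m(B_r(\cdot))$ and $\phi:=1/\mu(B_r(\cdot))$, so that $(A_r^{\mu_m})^*u_m(z_m)=\int_{B_r(z_m)}\phi_m u_m\,\ud\mu_m$, and proceed in three steps. \emph{(i) Regularity of the weights.} The estimate $|\mu(B_r(y))-\mu(B_r(y'))|\le\mu\big(A_{r-d(y,y'),\,r+d(y,y')}(y)\big)\to0$, together with the annular decay of the locally doubling measure $\mu$ on the length space $X$ (cf.\ Definition~\ref{defn-an-dec} and the discussion after it), shows that $y\mapsto\mu(B_r(y))$ is continuous and, by compactness, locally bounded away from $0$; hence $\phi$ is continuous and locally bounded, and so is each $\phi_m$, while $\phi_m\stackrel{GH}{\longrightarrow}\phi$ by Proposition~\ref{prop:intconv} applied to constants. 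Fix a continuous extension $\Phi\colon Z\to\R$ of $\phi\circ\iota^{-1}$ (Tietze; $\iota(X)$ is closed in $Z$ as $X$ is proper). \emph{(ii) Weak convergence of the reweighted sequence.} Since the sign of $\Phi$ does not depend on $m$, one has $\big((\Phi\circ\iota_m)u_m\big)_{\pm}=(\Phi_{\pm}\circ\iota_m)(u_m)_+ +(\Phi_{\mp}\circ\iota_m)(u_m)_-$; testing against any $\psi\in C_c(Z)$ and using that $\psi\Phi_+,\psi\Phi_-\in C_c(Z)$ reduces the desired weak convergence to the hypothesis $u_m\stackrel{GH}{\rightharpoonup}u$. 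Thus $(\Phi\circ\iota_m)u_m\stackrel{GH}{\rightharpoonup}(\Phi\circ\iota)u$, and Proposition~\ref{prop:intconv} applied to this sequence gives $\int_{B_r(z_m)}(\Phi\circ\iota_m)u_m\,\ud\mu_m\to\int_{B_r(z)}\phi u\,\ud\mu=(A_r^\mu)^*u(z)$. \emph{(iii) The error term.} We have
\[
\left|\int_{B_r(z_m)}(\phi_m-\Phi\circ\iota_m)u_m\,\ud\mu_m\right|\le\|\phi_m-\Phi\circ\iota_m\|_{L^\infty(B_r(z_m))}\int_{B_r(z_m)}|u_m|\,\ud\mu_m,
\]
where the second factor is bounded -- it converges to $\int_{B_r(z)}|u|\,\ud\mu$ by Proposition~\ref{prop:intconv} applied to $|u_m|\rightharpoonup|u|$ -- and the first factor tends to $0$. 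Combining (ii) and (iii) gives $(A_r^{\mu_m})^*u_m(z_m)\to(A_r^\mu)^*u(z)$, i.e.\ (b).

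The step I expect to be the main obstacle is the uniform bound $\|\phi_m-\Phi\circ\iota_m\|_{L^\infty(B_r(z_m))}\to0$ in step (iii), upgrading the pointwise convergence $\phi_m\stackrel{GH}{\longrightarrow}\phi$ to uniformity on the balls $B_r(z_m)$. I would argue by contradiction, as in the proof of Lemma~\ref{lem:uniftoweak}: a sequence $y_m\in B_r(z_m)$ with $|\phi_m(y_m)-\Phi(\iota_m(y_m))|\ge\varepsilon_0$ lies, for large $m$, in a fixed compact subset of $Z$, so along a subsequence $\iota_m(y_m)\to\iota(y)$ for some $y\in X$ (the limit belongs to $\iota(X)$ by the Hausdorff convergence $\iota_m(X_m)\to\iota(X)$); but then $\phi_m(y_m)\to\phi(y)$ by Proposition~\ref{prop:intconv} and $\Phi(\iota_m(y_m))\to\Phi(\iota(y))=\phi(y)$ by continuity of $\Phi$, a contradiction. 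Finally, for the ``in particular'' statement, assume $u$ is continuous. Then $A_r^\mu u$ and $(A_r^\mu)^*u$ are continuous (also by the annular decay of $\mu$), so (a) and (b) give the strong convergences $A_r^{\mu_m}u_m\stackrel{GH}{\longrightarrow}A_r^\mu u$ and $(A_r^{\mu_m})^*u_m\stackrel{GH}{\longrightarrow}(A_r^\mu)^*u$, which Lemma~\ref{lem:uniftoweak} turns into weak convergences; subtracting $u_m\stackrel{GH}{\rightharpoonup}u$ and dividing by $r^2$ then yields $\Delta_r^{\mu_m}u_m\stackrel{GH}{\rightharpoonup}\Delta_r^\mu u$ and $(\Delta_r^{\mu_m})^*u_m\stackrel{GH}{\rightharpoonup}(\Delta_r^\mu)^*u$.
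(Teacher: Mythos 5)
Your proof is correct and follows the same route as the paper's. Part (a) is exactly the paper's argument: two applications of Proposition~\ref{prop:intconv}, to $(u_m)$ and to the constant functions, followed by taking the quotient. For part (b) the paper merely asserts that $f_m=u_m/\mu_m(B_r(\cdot))$ converges weakly to $u/\mu(B_r(\cdot))$ and applies Proposition~\ref{prop:intconv} once more; your steps (i)--(iii) — continuity of the weights, the Tietze extension $\Phi$, the reduction of the weak convergence of $(\Phi\circ\iota_m)u_m$ to the hypothesis, and the compactness/contradiction argument for the uniform error bound (which mirrors the proof of Lemma~\ref{lem:uniftoweak}) — supply precisely the justification that the paper leaves implicit, so if anything your write-up is more complete. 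One caveat, which you share with the paper (it offers no argument at all for the ``in particular'' part): the closing step ``subtract $u_m\stackrel{GH}{\rightharpoonup}u$ and divide by $r^2$'' presumes that $\stackrel{GH}{\rightharpoonup}$ is linear, which is not literally true for the positive/negative-parts formulation of Definition~\ref{def:wfGH} — e.g.\ on $[0,1]$ with $u_m=\chi_{A_m}$ for sets $A_m$ equidistributing with density $1/2$ one has $u_m\stackrel{GH}{\rightharpoonup}1/2$ and $A_ru_m\to 1/2$, yet $(A_ru_m-u_m)_+\rightharpoonup 1/4\neq 0$. The conclusion is valid (and is all that is needed in the proof of Theorem~\ref{thm:blowup}) once $\stackrel{GH}{\rightharpoonup}$ for the $r$-laplacians is read as convergence of the signed measures $\Delta_r^{\mu_m}u_m\,\ud\mu_m$ tested against compactly supported continuous functions, for which linearity is immediate.
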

\begin{proof}
The first claim follows directly from Proposition \ref{prop:intconv}. Note that, if $\iota_m:X_m\to Z$ and $\iota:X\to Z$ realize the convergence then $X_m\stackrel{pmGH}{\longrightarrow} X$, Proposition \ref{prop:intconv} implies, in particular, that
\[
\mu_m(B_r(z_m))\to \mu(B_r(z))\quad \textrm{ whenever } \iota_m(z_m)\to \iota(z).
\]
It follows that, for any $r>0$, the sequence $(f_m)$ of functions $f_m\in L_{loc}^1(X_m)$,
\[
f_m(z)=\frac{u_m(z)}{\mu_m(B_r(z))}
\]
weakly converges to
\[
f(z):=\frac{u(z)}{\mu(B_r(z))}.
\]
This yields part (b) of the assertion.
\end{proof}

\subsection{Blow-ups of Sobolev functions with finite amv-norm}\label{subs:blowups}

Consider a proper locally doubling length space $X=(X,d,\mu)$. Given a point $x\in X$ and $r>0$, the pointed metric measure space
\[
X_r=(X,d_r,\mu_r,x),\quad d_r:=\frac{d}{r},\quad \mu_r:=\frac{1}{\mu(B(x,r))}\mu
\]
is called a \emph{rescaling} of $X$ (at $x$ by $r$).

Let $(r_m)$ be a sequence of positive numbers converging to zero, and denote $X_m:=X_{r_m}$. A pointed measured Gromov--Hausdorff limit $X_\infty$ of $X_m$ is called a \emph{tangent space of $X$ at $x$ subordinate to $(r_m)$.}

Similarly, if $f:X\to \R$ is a function, $x\in X$ and $r>0$, then a function
\[
f_r:=\frac{f-f(x)}{r}:X_r\to \R
\]
is a rescaling of $f$ at $x$ by $r$. A Gromov--Hausdorff limit $f_\infty:X_\infty\to \R$ of a sequence $(f_m)$, where $f_m:=f_{r_m}$ for $m=1,2,\ldots$ is called a \emph{tangent of $f$ at $x$ subordinate to $(r_m)$.} If the convergence $f_m\to f_\infty$ as $m\to \infty$ is weak (cf. Definition \ref{def:wfGH}), then we say that $f_\infty$ is an \emph{approximate tangent of $f$ at $x$, subordinate to $(r_m)$.}

It is worth remarking that, in general, tangents are highly non-unique -- different sequences can produce different limits. However, any sequence of rescalings is totally bounded and satifies \eqref{eq:totbound}. Moreover, $f_m(x)=0$ for all $m$, and thus Proposition \ref{prop:fGHcpt} implies the existence of tangents of Lipschitz functions at any point.

\begin{prop}\label{prop:liptan}
	Let $X$ be a proper locally doubling metric measure space and $f:X\to \R$ an $L$-Lipschitz function. Fix a sequence $(r_m)$ of positive numbers converging to zero. Then, for any $x\in X$, there exists a tangent space $X_\infty$ at $x$ subordinate to $(r_m)$ and a tangent function $f_\infty: X_\infty \to \R$ such that, up to extracting a subsequence of rescalings,  $f_m\stackrel{GH}{\longrightarrow} f_\infty$.
	\end{prop}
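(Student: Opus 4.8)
The plan is to verify the hypotheses of the compactness statement Proposition \ref{prop:fGHcpt} for the rescalings $X_m=X_{r_m}$ and the rescaled functions $f_m=f_{r_m}$, and then invoke it directly. Recall that the $d_m$-ball of radius $R$ about $x$ coincides with the $d$-ball $B_{Rr_m}(x)$, and that $\mu_m=\mu/\mu(B(x,r_m))$.

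First I would fix a compact ball $K=\bar B_1(x)$ and let $r_K>0$, $C_K>0$ be the radius and constant in the local doubling inequality \eqref{eq:doubl} on $K$; in particular the consequence \eqref{eq:doublexp} holds on $K$, with some exponent $Q$, for radii up to $r_K$. Since $r_m\to 0$, for every fixed $R>0$ there is $m_0=m_0(R)$ with $Rr_m<\min\{r_K,1\}$ for all $m\ge m_0$, so that the $d_m$-ball of radius $R$ about $x$ equals $B_{Rr_m}(x)\subset K$. Using \eqref{eq:doublexp} I would then estimate
\[
\mu_m\big(\{y:\ d_m(x,y)<R\}\big)=\frac{\mu(B_{Rr_m}(x))}{\mu(B_{r_m}(x))}\le C\max\{R,1\}^{Q},\qquad m\ge m_0(R),
\]
with $C=C(K)$; since each $X_m$ is proper (a closed $d_m$-ball is a closed $d$-ball, hence compact, as $X$ is proper) the remaining finitely many values are finite, and \eqref{eq:totbound} holds. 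The same change of scales identifies $N_{X_m}(\varepsilon,R)$ with the maximal number of disjoint closed $d$-balls of radius $\varepsilon r_m$ inside $B_{Rr_m}(x)$, which for $m\ge m_0(R)$ is bounded by a constant depending only on $R/\varepsilon$ and $C_K$ (a standard packing estimate from the doubling property on $K$), and is finite for the remaining $m$ by properness; hence $(X_m)$ is totally bounded.

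Next I would check the hypotheses on the functions. Since $f$ is $L$-Lipschitz on $(X,d)$, for any $y,z\in X$ we have
\[
|f_m(y)-f_m(z)|=\frac{|f(y)-f(z)|}{r_m}\le L\,\frac{d(y,z)}{r_m}=L\,d_m(y,z),
\]
so each $f_m$ is $L$-Lipschitz on $X_m$, and in particular the sequence $(f_m)$ is equicontinuous (one may take $\delta=\varepsilon/L$, independent of $m$ and $R$). Moreover $f_m(x)=(f(x)-f(x))/r_m=0$ for all $m$, so $\sup_m|f_m(p_m)|=0<\infty$, where $p_m=x$ is the basepoint of $X_m$.

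With all hypotheses in place, Proposition \ref{prop:fGHcpt} applies: passing to a subsequence, there is a proper pointed metric measure space $X_\infty$ and a continuous (indeed, by part (1) of Proposition \ref{prop:fGHcpt}, $L$-Lipschitz) function $f_\infty:X_\infty\to\R$ with $f_m\stackrel{GH}{\longrightarrow} f_\infty$ along this subsequence. By definition $X_\infty$ is a tangent space of $X$ at $x$ subordinate to the subsequence, and $f_\infty$ is the required tangent of $f$ at $x$. The only genuinely delicate point is the bookkeeping in passing from the \emph{local} doubling property to the uniform-in-$m$ control of the normalised measures $\mu_m$ and the packing numbers $N_{X_m}(\varepsilon,R)$ of the rescalings; once these uniform bounds are secured, the statement is an immediate consequence of Proposition \ref{prop:fGHcpt}.
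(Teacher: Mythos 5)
Your proof is correct and follows exactly the route the paper intends: the paper gives no separate proof of this proposition, merely noting in the preceding paragraph that rescalings are totally bounded and satisfy \eqref{eq:totbound}, that $f_m(x)=0$, and that Proposition \ref{prop:fGHcpt} then applies. You have simply filled in the details of those verifications (uniform mass bounds and packing estimates from local doubling, $L$-Lipschitzness of the $f_m$ in the rescaled metrics), all of which are carried out correctly.
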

In particular, any tangent space $X_\infty$ is doubling, and any tangent $f_\infty:X_\infty\to\R$ is $L$-Lipschitz. Next, we present a variant for Haj\l asz--Sobolev functions. 
\begin{prop}\label{prop:hajtan}
Let $X$ be a proper locally doubling metric measure space, $p>1$, and $u\in M^{1,p}(X)$. Given a sequence $(r_m)$, it holds that for $\mu$-almost every point $x\in X$ there exist a tangent space $X_\infty$ at $x$ subordinate to $(r_m)$, a subsequence of the rescalings $u_m:X_m\to\R$ and a Lipschitz function $u_\infty:X_\infty\to\R$, such that, up to extracting a subsequence of rescalings, $u_m\stackrel{GH}{\rightharpoonup} u_\infty$.
\end{prop}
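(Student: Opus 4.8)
The plan is to reduce to the Lipschitz case via the Lipschitz truncations of $u$ and then invoke the compactness result Proposition~\ref{prop:fGHcpt}. Fix a Haj\l asz gradient $g\in L^p(X)$ of $u$ with associated null set $N$, and for $n\in\N$ put $E_n:=\{y\in X\setminus N:\ g(y)\le n\}$; by the defining inequality $|u(y)-u(z)|\le d(y,z)(g(y)+g(z))$, the restriction $u|_{E_n}$ is $2n$-Lipschitz. I would then work at a \emph{good} point $x$, meaning: $x\notin N$; $x$ is a Lebesgue point of $g^p$, so $\limsup_{\rho\to0}\vint_{B_\rho(x)}g^p\ud\mu=g(x)^p<\infty$; and there is an $n\in\N$ with $x\in E_n$ and $x$ a $\mu$-density point of $E_n$. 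Each of these holds for $\mu$-a.e.\ $x$: the first two since $g^p\in L^1_{loc}$ and by the Lebesgue differentiation theorem, the third because $X=\bigcup_nE_n$ up to a null set and, by the Lebesgue density theorem (valid in doubling spaces), a.e.\ point of $E_n$ is a density point of $E_n$. Fix such an $x$ and a corresponding $n$, and consider the rescalings $X_m:=X_{r_m}$ and $u_m:=(u-u(x))/r_m:X_m\to\R$ (these lie in $L^1_{loc}(X_m)$).

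The first step is to produce a Lipschitz candidate for the tangent. With respect to the rescaled metric $d_{r_m}$, the restriction $u_m|_{E_n}$ is $2n$-Lipschitz and vanishes at $x$; extend it, by McShane's formula, to a $2n$-Lipschitz function $\tilde v_m:X_m\to\R$ with $\tilde v_m(x)=0$ and $\tilde v_m=u_m$ on $E_n$. Since $X$ is proper and locally doubling, the sequence of rescalings $(X_m)$ is totally bounded and satisfies~\eqref{eq:totbound}, and the $\tilde v_m$ are uniformly Lipschitz with $\tilde v_m(x)=0$; hence Proposition~\ref{prop:fGHcpt} applies to $(\tilde v_m)$. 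Passing to a subsequence we obtain a pointed proper metric measure space $X_\infty$, isometric embeddings $\iota_m,\iota$ into a common space $Z$ realizing $X_m\stackrel{pmGH}{\longrightarrow}X_\infty$ with $\iota_m(x)=\iota(x_\infty)=:q$ for all $m$ (where $x_\infty$ is the base point of $X_\infty$), and a $2n$-Lipschitz function $u_\infty:X_\infty\to\R$ with $\tilde v_m\stackrel{GH}{\longrightarrow}u_\infty$; by Lemma~\ref{lem:uniftoweak} we also have $\tilde v_m\stackrel{GH}{\rightharpoonup}u_\infty$ with the same embeddings. Thus $X_\infty$ is a tangent of $X$ at $x$ subordinate to the (relabelled) sequence, and $u_\infty$ is Lipschitz.

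It remains to upgrade $\tilde v_m\stackrel{GH}{\rightharpoonup}u_\infty$ to $u_m\stackrel{GH}{\rightharpoonup}u_\infty$, i.e.\ to check that replacing $\tilde v_m$ by $u_m$ does not affect the weak limits of the measures $(\cdot)_\pm\ud\mu_m$. Since $u_m$ and $\tilde v_m$ agree on $E_n$, the point is that $X_m\setminus E_n$ contributes nothing in the limit, and for this I would use two estimates. First, anchoring the Haj\l asz inequality at $x$ and using the doubling property, for every $R\ge1$ and all large $m$,
\[
\int_{B^{d_{r_m}}_R(x)}|u_m|^p\ud\mu_m\le C(R)\,\frac{\mu(B_{Rr_m}(x))}{\mu(B_{r_m}(x))}\Big(\vint_{B_{Rr_m}(x)}g^p\ud\mu+g(x)^p\Big)\le C(R,x),
\]
where the last bound uses that $x$ is a Lebesgue point of $g^p$. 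Second, since $x$ is a density point of $E_n$ and $\mu$ is doubling,
\[
\mu_m\big(B^{d_{r_m}}_R(x)\setminus E_n\big)=\frac{\mu\big(B_{Rr_m}(x)\setminus E_n\big)}{\mu(B_{Rr_m}(x))}\cdot\frac{\mu(B_{Rr_m}(x))}{\mu(B_{r_m}(x))}\longrightarrow 0\quad\text{as }m\to\infty.
\]
Combining these by H\"older's inequality — the one place $p>1$ is used — gives $\int_{B^{d_{r_m}}_R(x)\setminus E_n}|u_m|\ud\mu_m\to0$, and similarly $\int_{B^{d_{r_m}}_R(x)\setminus E_n}|\tilde v_m|\ud\mu_m\le 2nR\,\mu_m(B^{d_{r_m}}_R(x)\setminus E_n)\to0$ since $\tilde v_m$ is $2n$-Lipschitz and $\tilde v_m(x)=0$. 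Now let $\varphi:Z\to\R$ be continuous with $\spt\varphi\subset B_R(q)$, so that $\varphi\circ\iota_m$ is supported in $\iota_m^{-1}(B_R(q))=B^{d_{r_m}}_R(x)$; writing $\int_{X_m}(\varphi\circ\iota_m)(u_m)_\pm\ud\mu_m$ as $\int_{X_m}(\varphi\circ\iota_m)(\tilde v_m)_\pm\ud\mu_m$ plus an error term supported in $B^{d_{r_m}}_R(x)\setminus E_n$ and bounded by $\|\varphi\|_\infty\int_{B^{d_{r_m}}_R(x)\setminus E_n}(|u_m|+|\tilde v_m|)\ud\mu_m$, and letting $m\to\infty$ — using Lemma~\ref{lem:uniftoweak} for the main term and the two displays above for the error — yields condition~(3') of Definition~\ref{def:wfGH}. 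Hence $u_m\stackrel{GH}{\rightharpoonup}u_\infty$, as desired.

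The main obstacle is that Haj\l asz--Sobolev functions need not be continuous, so the rescalings $u_m$ are not equicontinuous and cannot be fed directly into Proposition~\ref{prop:fGHcpt}; passing through the Lipschitz truncations $E_n$ and a density point thereof repairs this, but then one must verify that the a priori large complement of $E_n$ becomes negligible after rescaling. The quantitative ingredient making this work is the uniform $L^p$-bound on $u_m$ over balls combined with H\"older's inequality — which is precisely why $p>1$ is assumed.
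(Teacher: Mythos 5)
Your proof is correct and follows essentially the same strategy as the paper: truncate $u$ to the sublevel set of the Haj\l asz gradient where it is Lipschitz, work at a density point of that set, extract a Gromov--Hausdorff limit of the rescaled Lipschitz extensions via Proposition \ref{prop:fGHcpt}, and show the bad set contributes nothing to the weak limit. The only (harmless) deviation is in the error estimate on the bad set, where you combine a uniform $L^p$ bound with H\"older's inequality, whereas the paper bounds $|u_m|$ pointwise by $R(g(x)+g(y))$ and uses that the normalized integral of $g$ over the bad set tends to zero at the chosen point.
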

\begin{proof}
	Let $g\in L^p(X)$ be a Haj\l asz upper gradient of $u$,  and set
	\[
	E_n=\{x\in X: g(x)>n \},\quad n\in\N.
	\]
	Then
	\[
	\lim_{n\to\infty}\mu(E_n)=0\quad\textrm{and}\quad \lim_{n\to\infty}\int_{E_n}g\ud\mu=0.
	\]
	Hence, there exists a null set $N\subset X$ for which every $x\in X\setminus N$ has the property that 
	\begin{align}\label{eq:dens}
	\lim_{r\to 0}\frac{\mu(B_r(x)\cap E_n)}{\mu(B_r(x))}=0\quad\textrm{ and }\quad\lim_{r\to 0}\frac{1}{\mu(B_r(x))}\int_{B_r(x)\cap E_n} g(y)\ud\mu(y)=0
	\end{align}
	for some $n\in \N$. We fix $x\in X\setminus N$ and $n\in\N$ satisfying \eqref{eq:dens}.
	
	Note that $u|_{X\setminus E_n}$ is $2n$-Lipschitz and let $\tilde u_n:X\to\R$ be a $2n$-Lipschitz extension of $u|_{X\setminus E_n}$. By Proposition \ref{prop:liptan} there is a subsequence of the rescalings $(\tilde u_n)_m:X_m\to\R$ and a Lipschitz function $\tilde u:X_\infty\to\R$ so that $(\tilde u_n)_m\stackrel{GH}{\longrightarrow} \tilde u$.
	
	
	
	We show that, for this subsequence, the rescalings $u_m:X_m\to \R$ converge weakly to $\tilde u$. (This is different from claiming that $(\tilde u_n)_m\stackrel{GH}{\rightharpoonup} \tilde u$, which follows from Lemma \ref{lem:uniftoweak}.)
	
	Let $Z$ be a proper metric space and $\iota_m:X_m\to Z$ isometric embeddings realizing the convergence $(\tilde u_n)_m\to \tilde u$. Given $\varphi\in C_b(Z)$, fix a large number $R>0$ so that $\spt \varphi\subset B_R(\iota_m(x))$ for all $m\in\N$. Then $\spt(\varphi\circ\iota_m)\subset B^{X_m}_R(x)=B_{r_mR}(x)$. We have
	\begin{align*}
	&\left|\int_{X_m}\varphi\circ\iota_m\ u_m\ud\mu_m-\int_{X_\infty}\varphi\circ\iota_\infty\ \tilde u\ud\mu_\infty\right|\\
	= & \left|\int_{X}\varphi\circ\iota_m\ (\tilde u_n)_m\ud\mu_m-\int_{X_\infty}\varphi\circ\iota_\infty\ \tilde u\ud\mu_\infty+\int_{E_n}\varphi\circ\iota_m\ [u_m-(\tilde u_n)_m]\ud\mu_m\right|\\
	\le & \left|\int_{X}\varphi\circ\iota_m\ (\tilde u_n)_m\ud\mu_m-\int_{X_\infty}\varphi\circ\iota_\infty\ \tilde u\ud\mu_\infty\right|+ \int_{E_n}|\varphi\circ\iota_m\ u_m|\ud\mu_m+\int_{E_n}|\varphi\circ\iota_m\ (\tilde u_n)_m|\ud\mu_m
	\end{align*}
	The first term converges to zero since $(\tilde u_n)_m\stackrel{GH}{\longrightarrow}\tilde u$. We may estimate the second term by
	\begin{align*}
	\int_{E_n}|\varphi\circ\iota_m|\ |u_m|\ud\mu_m\le \frac{\|\varphi\|_\infty}{\mu(B_{r_m}(x))}\int_{E_n\cap B_{r_mR}(x)}R(g(x)+g(y))\ud\mu(y)
	\end{align*}
	which, by \eqref{eq:dens} converges to zero as $m\to\infty$. Similarly, 
	\begin{align*}
	\int_{E_n}|\varphi\circ\iota_m|\ |(\tilde u_n)_m|\ud\mu_m\le 2nR\|\varphi\|_\infty\frac{\mu(E_n\cap B_{r_mR}(x))}{\mu(B_{r_m}(x))}
	\end{align*}
	converges to zero as $m\to\infty$.	This completes the proof.
\end{proof}

By a suitable cut-off argument, we obtain the following corollary whose proof we omit.
\begin{corol}\label{cor:hajtan}
	Let $\Omega\subset X$ be a domain in a proper locally doubling metric measure space, and let $u\in M^{1,p}_{loc}(\Omega)$. Given a sequence $r_m\downarrow 0$, it holds that for $\mu$-almost every $x\in \Omega$,  there exist a tangent space $X_\infty$ at $x$ subordinate to $(r_m)$, a subsequence of the rescalings $u_m:X_m\to\R$ and a Lipschitz function $u_\infty:X_\infty\to\R$, such that, up to extracting a subsequence of rescalings, $u_m\stackrel{GH}{\rightharpoonup} u_\infty$.
\end{corol}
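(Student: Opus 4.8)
The plan is to deduce Corollary~\ref{cor:hajtan} from Proposition~\ref{prop:hajtan} by a localization argument, using that an approximate tangent of a function at a point depends only on the restriction of the function to an arbitrarily small neighbourhood of that point.

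First I would reduce to a ball. Cover $\Omega$ by countably many balls $B_{2R_j}(q_j)$ with $\overline{B_{4R_j}(q_j)}\subset\Omega$ compact; since the conclusion is $\mu$-almost everywhere and purely local, it suffices to prove it for $\mu$-almost every $x$ in each fixed such ball $B:=B_{2R}(q)$, where $K:=\overline{B_{4R}(q)}\subset\Omega$ is compact. Fix $B$, pick a Lipschitz cut-off $\eta\colon X\to[0,1]$ with $\eta\equiv 1$ on $B_{2R}(q)$, $\spt\eta\subset B_{3R}(q)$, $\LIP(\eta)\le 2/R$, and set $v:=\eta u$, extended by $0$ off $B_{3R}(q)$.

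Next I would check $v\in M^{1,p}(X)$. Clearly $v\in L^p(X)$, since $\spt v\subset K$ and $u\in L^p_{loc}(\Omega)$. Let $g_K\in L^p_{loc}$ and $r_K>0$ be as in Definition~\ref{def:lochaj} for the compact set $K$. From the elementary bound $|v(x)-v(y)|\le \eta(x)|u(x)-u(y)|+|u(y)|\,|\eta(x)-\eta(y)|$ (which is vacuous unless one of $x,y$ lies in $\spt\eta$, in which case both lie in $K$ whenever $d(x,y)<R$) one gets, for $d(x,y)<\min\{r_K,R\}$, that $|v(x)-v(y)|\le d(x,y)\bigl(G(x)+G(y)\bigr)$ with $G:=\bigl(g_K+\tfrac{2}{R}|u|\bigr)\chi_K\in L^p(X)$; and for $d(x,y)\ge s:=\min\{r_K,R\}$ the compact support of $v$ gives $|v(x)-v(y)|\le|v(x)|+|v(y)|\le d(x,y)\bigl(s^{-1}|v(x)|+s^{-1}|v(y)|\bigr)$. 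Hence $G+s^{-1}|v|\in L^p(X)$ is a Haj\l asz gradient of $v$, so $v\in M^{1,p}(X)$. (One could instead simply rerun the proof of Proposition~\ref{prop:hajtan} with $g_K$ in place of the global Haj\l asz gradient and all scales restricted below $r_K$; the cut-off merely keeps us literally within the hypotheses of that proposition.)

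Finally I would transfer the conclusion from $v$ to $u$. Proposition~\ref{prop:hajtan} applied to $v$ produces a $\mu$-null set $N_B\subset X$ such that for every $x\in X\setminus N_B$ some subsequence of the rescalings $v_m\colon X_m\to\R$ at $x$ satisfies $v_{m_k}\stackrel{GH}{\rightharpoonup}v_\infty$ for a Lipschitz $v_\infty\colon X_\infty\to\R$, realized by isometric embeddings $\iota_{m_k}\colon X_{m_k}\to Z$ and $\iota_\infty\colon X_\infty\to Z$. These embeddings in particular realize $X_{m_k}\stackrel{pmGH}{\longrightarrow}X_\infty$, which is a statement about the rescaled spaces alone, so they are equally admissible for testing weak convergence of the sequence $(u_{m_k})$. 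Now take $x\in B_{2R}(q)\setminus N_B$: since $\eta\equiv1$ on the neighbourhood $B_{2R}(q)$ of $x$ we have $v(x)=u(x)$ and $v=u$ on $B_{2R}(q)$, hence $v_m=u_m$ on $B_\rho^{X_m}(x)$ for every fixed $\rho>0$ and all $m$ large (depending on $\rho$). For a boundedly supported continuous $\varphi\colon Z\to\R$, the function $\varphi\circ\iota_{m_k}$ is supported in a fixed ball $B_\rho^{X_{m_k}}(x)$ on which $u_{m_k}=v_{m_k}$ for $k$ large; therefore $\int(\varphi\circ\iota_{m_k})(u_{m_k})_\pm\,\ud\mu_{m_k}=\int(\varphi\circ\iota_{m_k})(v_{m_k})_\pm\,\ud\mu_{m_k}\to\int(\varphi\circ\iota_\infty)(v_\infty)_\pm\,\ud\mu_\infty$, that is, $u_{m_k}\stackrel{GH}{\rightharpoonup}v_\infty=:u_\infty$. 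Taking the union of the sets $N_B$ over the countable cover finishes the proof. The step requiring the most care is the scale bookkeeping in showing $v\in M^{1,p}(X)$ globally --- producing a single $L^p$ gradient valid at all scales even though $u$ is only locally $L^p$ and the Haj\l asz inequality is only available below scale $r_K$; after that, the passage from $v$ to $u$ is just the observation that the rescalings and their realizing embeddings do not involve the function and that $u,v$ coincide on arbitrarily large balls of $X_{m_k}$ for $k$ large.
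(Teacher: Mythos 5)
Your argument is correct and is precisely the ``suitable cut-off argument'' the paper alludes to when it omits the proof: multiply $u$ by a Lipschitz cut-off to obtain a globally defined function in $M^{1,p}(X)$, apply Proposition~\ref{prop:hajtan}, and observe that the weak convergence of the rescalings is tested only against boundedly supported functions, on whose supports $u_{m_k}$ and $v_{m_k}$ eventually coincide. The scale bookkeeping for the global Haj\l asz gradient of $v=\eta u$ and the countable covering of $\Omega$ are handled correctly, so nothing is missing.
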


We are now ready to prove that having finite amv-norm forces tangent maps to be mean value harmonic.

\begin{proof}[Proof of Theorem \ref{thm:blowup}]
	
	By Corollary~\ref{cor:hajtan}, $u$ has approximate tangent maps, subordinate to $(r_m)$, for $\mu$-almost every $x\in X$. For any $k\in\N$, $r\in \mathbb Q_+$, and compact $K\subset \Omega$, we have
	\[
	\int_K\liminf_{m\to\infty}\vint_{B(x,kr_m)}|\Delta_{r_mr}u|d\mu \,d\mu(x)\le \liminf_{m\to\infty}\int_K\vint_{B(x,kr_m)}|\Delta_{r_mr}u|d\mu\ d\mu(x)\le C\limsup_{\rho\to 0}\int_K|\Delta_\rho u|d\mu,
	\]
	cf. \cite[Theorem 3.5]{Aldaz18}. Thus, almost every $x_0\in X$ has the following property: for every  subsequence of $(r_m)$ there exists a further subsequence (not relabeled), for which
	\begin{equation}\label{eq:Damv}
	\lim_{m\to\infty}\vint_{B(x_0,kr_m)}|\Delta_{r_mr}u|d\mu<\infty\quad\textrm{for every $k\in\N$ and $r\in \mathbb Q_+$.}
	\end{equation}

	Let $x_0\in \Omega$ be a point where \eqref{eq:Damv} and the claim of Corollary~\ref{cor:hajtan} hold for $(r_m)$. Furthermore, let $(X_\infty,d_\infty,\mu_\infty,x_\infty)$ be a pointed measured Gromov--Hausdorff limit of a sequence
	\[
	X_m=(X,d_m,\mu_m,x_0)=\left(X, \frac{d}{r_n},\frac{\mu}{\mu(B_{r_m}(x_0))},x_0\right),
	\]
	and $u_\infty:X_\infty\to \R$ a weak limit of the sequence
	\[
	u_m:=\frac{u-u(x_0)}{r_m}:X_m\to \R,
	\]
	for a subsequence of $(r_m)$. We pass to a further subsequence (again not relabeled) for which \eqref{eq:Damv} holds. Note that
	\begin{align*}
	A_r^{\mu_m}u_m(z)-u_m(z)=\vint_{B_{r_mr}(z)}\frac{u(y)-u(z)}{r_m}d \mu(y)=r_mr^2\Delta_{r_mr}u(z)
	\end{align*}
	for any $z\in X_m$ and $r>0$. Let $\iota_m:X_m\to Z$ and $\iota:X_\infty\to Z$ realize the convergence $u_m\stackrel{GH}{\rightharpoonup} u_\infty$. Corollary \ref{cor:rdeltaconv} implies that
\begin{align*}
\int_{X_\infty}\varphi\circ\iota_\infty(z)\left[A_r^{\mu_\infty}u_\infty(z)-u_\infty(z)\right]\ud \mu_\infty(z)&=\lim_{m\to\infty}\int_{X_m}\varphi\circ\iota_m(z)\left[A_r^{\mu_m}u_\infty(z)-u_m(z)\right]\ud \mu_m(z)\\
&=\lim_{m\to\infty}r_mr^2\int_{X_m} \varphi\circ\iota_m(z)\Delta_{r_mr}u(z)d\mu_m(z)
\end{align*}
for every compactly supported $\varphi\in C(Z)$. Fix such $\varphi\in C_c(Z)$, and not that for large enough $k$, the support of $\varphi\circ\iota_m$ satisfies $\spt(\varphi\circ\iota_m)\subset B_{kr_m}(x_0)$ for all $m$.

Since
\begin{align*}
\left|\int_{X_m} \varphi\circ\iota_m(z)\Delta_{r_mr}u(z)d\mu_m(z)\right|\le & \frac{\|\varphi\circ\iota_m\|_\infty}{\mu(B_{r_m}(x_0))}\int_{B_{kr_m}(x_0)}|\Delta_{r_mr}u(z)|d\mu(z)\\
\le& C_{k,\varphi} \vint_{B_{kr_m}(x_0)}|\Delta_{r_mr}u|d\mu<\infty
\end{align*}
it follows that 
\begin{align*}
\int_{X_\infty}\varphi\circ\iota_\infty(z)\left[(u_\infty)_{B_\infty(z,r)}-u_\infty(z)\right]d \mu_\infty(z)=0.
\end{align*}
Since $\varphi$ is arbitrary, this establishes the claim for all rational $r>0$. Since $X_\infty$ is a length space and $\mu_\infty$ a doubling measure, \cite[Corollary 2.2]{buc} implies that $(u_\infty)_{B_\infty(z,r_m)}\to (u_\infty)_{B_\infty(z,r)}$ whenever $r_m\to r$. Thus the claim follows for arbitrary $r>0$.
\end{proof}

\section{Weighted Euclidean spaces: elliptic PDEs and amv-harmonic functions}\label{sec:eucl}

In this final section we consider amv-harmonicity in weighted Euclidean domains and prove Theorem \ref{thm:weight} which characterizes weak (and in some cases strong) amv-harmonic functions as weak solutions of the elliptic PDE, cf. \eqref{w-div-oper}:
$$
L_wu=0.
$$

To set up notation, fix a norm $\|\cdot\|$ on $\R^n$ and a domain $\Omega\subset \R^n$. Denote $\Ha^n_\Omega:=\Ha^n|_\Omega$, where the Hausdorff measure is taken with respect to the metric induced by $\|\cdot\|$. Given a positive, locally Lipschitz function $w:\Omega\to (0,\infty)$, we consider the metric measure spaces $$\Omega:=(\Omega,\|\cdot\|,\Ha^n_\Omega)\quad\textrm{and}\quad\Omega_w=(\Omega,\|\cdot\|,w\Ha^n_\Omega).$$
Throughout the section, $B^n$ denotes the (open) unit ball $B_1(0)$ with respect to $\|\cdot\|$. We will use the shorthand notation
\[
\ud y=\ud\Ha^n_\Omega(y),\quad  A_r:=A_r^{\Ha^n_\Omega},\quad\textrm{and}\quad \Delta_r:= \Delta_r^{\Ha^n_\Omega},
\]
as well as
\[
A_r^w:=A_r^{w\Ha^n_\Omega}\quad\textrm{and}\quad \Delta_r^w:=\Delta_r^{w\Ha^n_\Omega}.
\]
The following elementary facts will be used throughout the sequel.
\begin{remark}\label{rmk:locdoubl}
	Since $w$ is continuous and strictly positive on $\Omega$, the measure $w\Ha^n_\Omega$ is locally doubling. Moreover,
	\begin{itemize}
		\item[(1)] $A_r^wf\to f$ locally uniformly in $\Omega$ as $r\to 0$, whenever $f:\Omega\to \R$ is continuous;
		\item[(2)] For each $p\in [1,\infty]$, $L^p_{loc}(\Omega)=L^p_{loc}(\Omega_w)$ as sets, and $L^p$-convergence on compact subsets of $\Omega$ with respect to $\Ha^n_\Omega$ and $w\Ha^n_\Omega$ agree.
	\end{itemize}
\end{remark}

The next lemma provides two different representations for $\Delta_r^w$ in terms of $\Delta_r$ and will prove useful in the next subsection.

\begin{lemma}\label{lem:rep}
	Suppose $f\in L^1_{loc}(\Omega)$. Then
	\begin{equation}\label{eq:2ident}
		\Delta_r^wf=\frac{1}{A_rw}(\Delta_r(fw)-f\Delta_rw),
	\end{equation}
	and
	\begin{equation}\label{eq:wlapl}
		\Delta_r^wf=\Delta_rf+\frac{1}{A_rw}\langle f,w\rangle_r,
	\end{equation}
	where
	\[
	\langle f,g\rangle_r(x):= \vint_{B_r(x)}\frac{f(y)-A_rf(x)}{r}\frac{g(y)-A_rg(x)}{r}\ud y,
	\]
	for  $f,g\in L_{loc}^1(\Omega)$ such that $fg\in L^1_{loc}(\Omega)$, and all balls $B_r(x)\subset \Om$.
\end{lemma}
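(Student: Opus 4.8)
\textbf{Proof strategy for Lemma \ref{lem:rep}.} Both identities are purely algebraic manipulations of the averaging integrals, so the plan is to expand the left-hand side directly from the definition $\Delta_r^w f = (A_r^w f - f)/r^2$ and the weighted averaging operator $A_r^w f(x) = \vint_{B_r(x)} f w\, \ud y \big/ \vint_{B_r(x)} w\, \ud y$, and then massage the numerator.

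\emph{First identity.} I would start from
\[
A_r^w f(x) = \frac{\vint_{B_r(x)} f w \, \ud y}{\vint_{B_r(x)} w\, \ud y} = \frac{A_r(fw)(x)}{A_r w(x)},
\]
so that
\[
\Delta_r^w f(x) = \frac{1}{r^2}\left(\frac{A_r(fw)(x)}{A_r w(x)} - f(x)\right) = \frac{1}{A_r w(x)}\cdot \frac{A_r(fw)(x) - f(x) A_r w(x)}{r^2}.
\]
Now write $A_r(fw) - f A_r w = (A_r(fw) - fw) - f(A_r w - w)$, and divide by $r^2$ to recognize $\Delta_r(fw) - f\Delta_r w$; this gives \eqref{eq:2ident}. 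One should note that $A_r w(x) > 0$ since $w$ is strictly positive and continuous, so dividing is legitimate on any ball contained in $\Omega$.

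\emph{Second identity.} Starting again from $\Delta_r^w f = \frac{1}{A_r w}\cdot \frac{A_r(fw) - f A_r w}{r^2}$, I would instead add and subtract $A_r f \cdot A_r w$ inside the numerator, or more directly compute
\[
A_r(fw)(x) - A_r f(x)\, A_r w(x) = \vint_{B_r(x)} (f(y) - A_r f(x))(w(y) - A_r w(x))\, \ud y,
\]
which is the standard covariance identity (the cross terms vanish because $\vint_{B_r(x)}(f(y) - A_r f(x))\,\ud y = 0$). Dividing by $r^2$ identifies the right-hand side as $\langle f, w\rangle_r(x)$. Hence
\[
\frac{A_r(fw) - f A_r w}{r^2} = \frac{A_r(fw) - A_r f\, A_r w}{r^2} + \frac{A_r f\, A_r w - f A_r w}{r^2} = \langle f,w\rangle_r + A_r w \cdot \Delta_r f,
\]
and dividing through by $A_r w$ yields \eqref{eq:wlapl}. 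The integrability hypotheses ($f, fw \in L^1_{loc}$, etc.) are exactly what is needed to ensure all the averages appearing are finite on balls compactly contained in $\Omega$.

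\emph{Main obstacle.} There is no real obstacle here — the lemma is a bookkeeping exercise. The only points requiring a word of care are: (i) justifying that $A_r w$ is bounded away from zero locally so the divisions are valid, and (ii) the covariance/polarization identity for $\langle\cdot,\cdot\rangle_r$, which follows immediately from the fact that subtracting the mean does not change an average of the other factor. I would present both computations in two or three displayed lines each.
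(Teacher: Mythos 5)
Your proposal is correct and follows essentially the same route as the paper: the paper proves \eqref{eq:2ident} via the pointwise identity $(f(y)-f(x))w(y)=f(y)w(y)-f(x)w(x)+f(x)(w(x)-w(y))$, which is exactly your decomposition $A_r(fw)-fA_rw=(A_r(fw)-fw)-f(A_rw-w)$ written at the level of the integrand, and it proves \eqref{eq:wlapl} by expanding $\langle f,w\rangle_r$ into $\frac{1}{r^2}\left[A_r(fw)-A_rf\,A_rw\right]$, i.e.\ your covariance identity. No gaps; both computations match.
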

\begin{proof}
	The first identity is a direct consequence of the pointwise identity
	\[
	(f(y)-f(x))w(y)=f(y)w(y)-f(x)w(x)+f(x)(w(x)-w(y)),\quad x,y\in\Omega.
	\]
	To see the second assertion~\eqref{eq:wlapl}, fix $r>0$ and denote for brevity $\mu:=w\Ha^n_\Omega$. Then 
	\begin{align*}
		\frac{\langle f,w\rangle_r(x)}{A_rw(x)}=&\frac{1}{A_rw(x)}\vint_{B_r(x)}\frac{(f(y)-A_rf(x))(w(y)-A_rw(x))}{r^2}\ud y\\
		=&\frac{1}{A_rw(x)}\vint_{B_r(x)}\frac{f(y)w(y)-A_rw(x)f(y)-A_rf(x)w(y)+A_rf(x)A_rw(x)}{r^2}\ud y\\
		=&\frac{1}{r^2A_rw(x)}\left[\vint_{B_r(x)}fw\ud y-A_rf(x)A_rw(x)\right]=\frac{A_r^wf(x)-A_rf(x)}{r^2}\\
		=&\Delta_r^wf(x)-\Delta_rf(x),\quad x\in \Omega.
	\end{align*}
\end{proof}

\subsection{$\AMV^p_{loc}(\Omega_w)$ as a second order Sobolev space}
To prove Theorem \ref{thm:weight}, we start by analyzing the operator \eqref{w-div-oper}. Note that, for $u\in L^1_{loc}(\Omega)$,
\begin{equation*}
	L_wu:=\frac{1}{2}\div(M\nabla u)+\frac{1}{w}\langle \nabla w,M\nabla u\rangle
\end{equation*}
is a distribution, and is given by a locally integrable function when $u\in W^{2,1}_{loc}(\Omega)$. Recall that the matrix of second moments of $B^n$ is defined as
\[
M=(m_{ij}),\quad m_{ij}=\vint_{B^n} y_iy_j\ud y,\quad i,j=1,\ldots,n.
\]
\begin{remark}\label{rmk:sym}
	The matrix $M$ is symmetric and positive definite, cf.~\cite[Sections 4.1-4.3]{ak}. Since $B^n$ is symmetric we have that
	\[
	\vint_{B_r(x)}(y-x)_i\ud y=r\vint_{B^n}z_i\ud z=0,\quad i=1,\ldots,n
	\]
	for $x\in \R^n$ and $r>0$.
\end{remark}

In this subsection we show that, when $u\in W^{2,p}_{loc}(\Omega)$, we have $\Delta_r^wu\to L_wu$ in $L^p_{loc}(\Omega)$. More specifically, we have the following theorem.

\begin{theorem}\label{thm:weight2}
	Let $p\in (1,\infty)$. Then $W^{1,p}_{loc}(\Omega)\cap \AMV^p_{loc}(\Omega_w)=W^{2,p}_{loc}(\Omega)$ and, for every $u\in W^{2,p}_{loc}(\Omega)$, we have that $\Delta_r^wu\stackrel{r\to 0}{\longrightarrow} L_wu$ in $L^p_{loc}(\Omega)$.
\end{theorem}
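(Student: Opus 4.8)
The proof splits into two statements: a pointwise (or rather $L^p_{loc}$) convergence statement $\Delta_r^w u \to L_w u$ for $u \in W^{2,p}_{loc}(\Omega)$, and a characterization of $\AMV^p_{loc}(\Omega_w) \cap W^{1,p}_{loc}(\Omega)$ as $W^{2,p}_{loc}(\Omega)$. I would begin with the convergence statement, since the nontrivial inclusion $W^{1,p}_{loc}(\Omega) \cap \AMV^p_{loc}(\Omega_w) \subset W^{2,p}_{loc}(\Omega)$ will be deduced from it by a difference-quotient/weak-compactness argument.

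\emph{Step 1: Convergence for smooth functions, then for $W^{2,p}_{loc}$.} First take $u \in C^\infty(\Omega)$ (or $C^2$). Fix a ball $B_r(x) \Subset \Omega$. Using Remark \ref{rmk:sym} (the first moments of $B^n$ vanish), a second-order Taylor expansion of $u$ at $x$ gives
\[
A_r u(x) - u(x) = \vint_{B_r(x)} \Big( \langle \nabla u(x), y-x\rangle + \tfrac12 \langle D^2u(x)(y-x),y-x\rangle \Big)\,dy + o(r^2) = \tfrac{r^2}{2}\,\mathrm{tr}(M D^2 u(x)) + o(r^2),
\]
so $\Delta_r u(x) \to \tfrac12 \mathrm{tr}(MD^2u(x)) = \tfrac12 \div(M\nabla u)(x)$. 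Similarly, expanding $u$ and $w$ to first order, $\langle u,w\rangle_r(x) \to \langle M\nabla u(x), \nabla w(x)\rangle$ (the averages $A_r u(x), A_r w(x)$ differ from $u(x), w(x)$ by $O(r^2)$, which is absorbed, and the leading term is $\vint_{B_r(x)} \tfrac{\langle \nabla u(x),y-x\rangle \langle \nabla w(x), y-x\rangle}{r^2}\,dy = \langle M\nabla u(x),\nabla w(x)\rangle$). Also $A_r w(x) \to w(x)$. Feeding these into the representation \eqref{eq:wlapl} from Lemma \ref{lem:rep},
\[
\Delta_r^w u(x) = \Delta_r u(x) + \frac{\langle u,w\rangle_r(x)}{A_r w(x)} \longrightarrow \tfrac12 \div(M\nabla u)(x) + \tfrac1w \langle \nabla w, M\nabla u\rangle(x) = L_w u(x).
\]
To upgrade to $u \in W^{2,p}_{loc}(\Omega)$ and get $L^p_{loc}$-convergence, I would use a mollification argument: for $u_\varepsilon = u * \eta_\varepsilon$ one has uniform (in $r$, on compacts) bounds $\|\Delta_r^w u\|_{L^p(K)} \lesssim \|u\|_{W^{2,p}(K')}$ — these follow by writing $\Delta_r u = \vint_{B^n}\int_0^1 (1-t)\langle D^2u(x+trz)z,z\rangle\,dt\,dz$ (exact integral Taylor remainder, valid for $W^{2,p}$ by density, and controlled in $L^p(K)$ by $\|D^2 u\|_{L^p(K')}$ via a change of variables and Minkowski's integral inequality), and similarly for $\langle u,w\rangle_r$ using $\nabla u \in W^{1,p}$ and $w$ Lipschitz. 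Then a standard $3\varepsilon$-argument (using convergence for the smooth $u_\varepsilon$, continuity of $u \mapsto L_w u$ in the appropriate topology, and the uniform bound) gives $\Delta_r^w u \to L_w u$ in $L^p_{loc}$. In particular $W^{2,p}_{loc}(\Omega) \subset W^{1,p}_{loc}(\Omega) \cap \AMV^p_{loc}(\Omega_w)$, and $\Delta_r^w u$ is bounded in $L^p_{loc}$.

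\emph{Step 2: The reverse inclusion.} Now suppose $u \in W^{1,p}_{loc}(\Omega)$ with $\limsup_{r\to 0}\|\Delta_r^w u\|_{L^p(K)} < \infty$ for every compact $K$. By Remark \ref{rmk:locdoubl}(2) this is the same as a bound with respect to $\Ha^n_\Omega$; and by \eqref{eq:wlapl} again, since $w$ is locally Lipschitz and bounded away from $0$, the extra term $\langle u,w\rangle_r / A_r w$ is controlled in $L^p_{loc}$ by $\|\nabla u\|_{L^p_{loc}} \cdot \mathrm{Lip}(w)$ (here I expand the factor involving $w$ and use $|w(y)-A_rw(x)| \le C r$ on $B_r(x)$, while $\tfrac{u(y)-A_ru(x)}{r}$ is handled via a Poincaré inequality for $W^{1,p}$ on balls). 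Hence $\limsup_{r\to0}\|\Delta_r u\|_{L^p(K)} < \infty$, i.e. $u \in \AMV^p_{loc}(\Omega)$ in the unweighted Euclidean space. It then suffices to show that a $W^{1,p}_{loc}$ function with bounded unweighted $\Delta_r$ in $L^p_{loc}$ lies in $W^{2,p}_{loc}$. For this I would test against $\varphi \in C_c^\infty(\Omega)$: using the Green-type formula (Remark \ref{rmk: Green}) $\int \varphi \Delta_r u = \int u \Delta_r^* \varphi$, and since for smooth compactly supported $\varphi$ (in the unweighted Lebesgue setting $A_r^* = A_r$, as the measure is translation invariant — indeed $a_r \equiv 1$ and $\Delta_r^* = \Delta_r$ here) we have $\Delta_r \varphi \to \tfrac12 \div(M\nabla\varphi)$ uniformly with uniform $C^0_c$ bounds, we get
\[
\int_\Omega u \,\tfrac12\div(M\nabla\varphi)\,dx = \lim_{r\to0}\int_\Omega \varphi \,\Delta_r u\,dx.
\]
The right-hand side is, along a subsequence $r_k \to 0$, the pairing of $\varphi$ with a weak-$*$ limit $v \in L^p_{loc}(\Omega)$ of $\Delta_{r_k} u$ (using reflexivity of $L^p$, $1<p<\infty$, and the uniform $L^p_{loc}$ bound). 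Hence $\tfrac12 \mathrm{tr}(M D^2 u) = v$ in the distributional sense, with $v \in L^p_{loc}$. Since $M$ is symmetric positive definite, $\tfrac12\mathrm{tr}(M D^2 \cdot)$ is a constant-coefficient elliptic operator, so by interior elliptic (Calderón–Zygmund) regularity $u \in W^{2,p}_{loc}(\Omega)$. Combined with Step 1 this proves the set equality, and the convergence statement in the theorem is exactly Step 1 applied to this $u \in W^{2,p}_{loc}(\Omega)$.

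\emph{Main obstacle.} The genuinely delicate point is the passage from $C^2$ to $W^{2,p}_{loc}$ in Step 1 — establishing the uniform (in $r$) $L^p_{loc}$-bounds $\|\Delta_r^w u\|_{L^p(K)} \le C\|u\|_{W^{2,p}(K')}$ and the $L^p_{loc}$-continuity needed for the density argument. This requires care with the exact integral form of the Taylor remainder, Minkowski's integral inequality, and the change-of-variables handling of the averaging over $B_r(x)$; the term $\langle u,w\rangle_r$ needs a separate (easier) estimate since only $\nabla u \in W^{1,p}$ and $w$ is merely Lipschitz. Everything else — the pointwise computation for smooth functions and the elliptic-regularity bootstrap in Step 2 — is standard once this estimate is in place.
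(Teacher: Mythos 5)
Your proposal is correct and follows essentially the same route as the paper: both reduce to the unweighted statement $\AMV^p_{loc}(\Omega)=W^{2,p}_{loc}(\Omega)$ via the representation $\Delta_r^w u=\Delta_r u+\langle u,w\rangle_r/A_rw$ together with a Poincar\'e/maximal-function bound on the cross term, prove the forward inclusion by Taylor-type estimates, and prove the reverse inclusion by testing against smooth compactly supported functions (using self-adjointness of the unweighted $\Delta_r$), weak $L^p$-compactness and interior elliptic regularity for the constant-coefficient operator $\div(M\nabla\cdot)$. The only difference is technical: where you obtain the uniform bound $\|\Delta_r u\|_{L^p(K)}\lesssim\|D^2u\|_{L^p(K')}$ and the convergence of $\langle u,w\rangle_r$ by the integral form of the Taylor remainder, Minkowski's integral inequality and mollification, the paper instead invokes quantitative estimates from the literature (a pointwise Riesz-potential bound and an $L^p$-limit identity for second-order difference quotients) and a pointwise a.e. approximate-differentiability lemma for $W^{1,1}$ functions followed by dominated convergence.
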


We will prove Theorem \ref{thm:weight2} by reducing it to the unweighted case (using Lemma \ref{lem:rep}), where the result follows by standard theory in a straightforward way.

The following observation is well known, but we briefly recall the proof.

\begin{lemma}\label{lem:c2uni}
	If $u\in C^2(\Omega)$, then
	\[
	\Delta_ru\to \frac 12 \div(M\nabla u)
	\]
	locally uniformly in $\Omega$, as $r\to 0$.
\end{lemma}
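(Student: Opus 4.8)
The plan is to prove this by a second-order Taylor expansion of $u$ around each point $x$, uniformly on compact subsets of $\Omega$. First I would fix a compact set $K \subset \Omega$ and a radius $\rho > 0$ with $\overline{N_\rho(K)} \subset \Omega$ compact, and work with $0 < r < \rho$ so that all the balls $B_r(x)$, $x \in K$, stay in a fixed compact set on which the second derivatives of $u$ are uniformly continuous. For $x \in K$ and $y \in B_r(x)$, Taylor's theorem with, say, an integral remainder gives
\[
u(y) - u(x) = \langle \nabla u(x), y - x\rangle + \tfrac12 \langle D^2u(x)(y-x), y-x\rangle + R(x,y),
\]
where $|R(x,y)| \le \omega(|y-x|)\,|y-x|^2 \le \omega(r)\,r^2$ and $\omega$ is a modulus of continuity for $D^2u$ on $\overline{N_\rho(K)}$, so that $\sup_{x\in K}\sup_{y\in B_r(x)} |R(x,y)|/r^2 \to 0$ as $r\to 0$.

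Next I would average over $y \in B_r(x)$ with respect to $\Ha^n_\Omega$. By Remark~\ref{rmk:sym}, the first-order term integrates to zero since $\vint_{B_r(x)}(y-x)_i\,\ud y = 0$. For the quadratic term, the change of variables $y = x + rz$ gives
\[
\vint_{B_r(x)} (y-x)_i(y-x)_j\, \ud y = r^2 \vint_{B^n} z_i z_j\, \ud z = r^2 m_{ij},
\]
so $\tfrac12 \vint_{B_r(x)} \langle D^2u(x)(y-x), y-x\rangle\, \ud y = \tfrac{r^2}{2}\sum_{i,j} m_{ij}\,\partial_i\partial_j u(x) = \tfrac{r^2}{2}\operatorname{tr}(M\, D^2u(x))$. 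Dividing by $r^2$ yields
\[
\Delta_r u(x) = \tfrac12 \operatorname{tr}(M\, D^2u(x)) + \frac{1}{r^2}\vint_{B_r(x)} R(x,y)\,\ud y,
\]
and the remainder term is bounded in absolute value by $\omega(r) \to 0$ uniformly in $x \in K$. Finally I would observe that since $M$ is a constant symmetric matrix, $\operatorname{tr}(M\,D^2u) = \sum_{i,j} m_{ij}\partial_i\partial_j u = \div(M\nabla u)$ pointwise for $u \in C^2$, which identifies the limit as $\tfrac12\div(M\nabla u)$ and completes the proof.

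There is no serious obstacle here; the only points requiring a little care are the uniform control of the Taylor remainder (handled by uniform continuity of $D^2u$ on a compact neighbourhood, which is where $C^2$ rather than merely twice differentiable is used) and the vanishing of the linear term, which is exactly the symmetry content of Remark~\ref{rmk:sym}. One should also note that $\Ha^n_\Omega$ agrees with a constant multiple of Lebesgue measure — indeed, with the normalization of the Hausdorff measure chosen in the Preliminaries, on $\R^n$ it is Lebesgue measure — so the change of variables $y = x + rz$ scales the measure of $B_r(x)$ by $r^n$ and the averages behave as claimed; this is implicit in the notation $m_{ij} = \vint_{B^n} y_iy_j\,\ud\Ha^n(y)$ used in the definition of $M$.
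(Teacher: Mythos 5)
Your proof is correct and follows essentially the same route as the paper: a second-order Taylor expansion, the vanishing of the linear term by the symmetry of $B^n$ (Remark \ref{rmk:sym}), the identification of the quadratic term with $\frac{r^2}{2}\sum_{i,j}m_{ij}\partial_{ij}u$, and locally uniform control of the remainder. Your version merely spells out the uniformity of the Taylor remainder in more detail than the paper does.
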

\begin{proof}
	Let $x\in \Omega$ and $r<\dist(x,\partial \Omega)$. The Taylor expansion of $u$ yields
	\[
	u(x+rz)-u(x)=r\sum_{i=1}^n\partial_iu(x)z_i+\frac 12 r^2\sum_{i,j=1}^n\partial_{ij}u(x)z_iz_j+E_r(x,z),
	\]
	where $E_r(x,z)/r^2\to 0$ locally uniformly in $\Omega\times \bar B^n$, as $r\to 0$. Thus we have 
	\begin{align*}
		\Delta_ru(x)=\vint_{B^n}\frac{u(x+rz)-u(x)}{r^2}\ud z=\frac 12 \sum_{i,j=1}^n\partial_{ij}u(x)\vint_{B^n}z_iz_j\ud z+\vint_{B^n}\frac{E_r(x,z)}{r^2}\ud z,
	\end{align*}
	from which the claim follows.
\end{proof}
\begin{prop}\label{prop:unweight}
	Let $p>1$. Then $\AMV_{loc}^p(\Omega)=W^{2,p}_{loc}(\Omega)$ and, for each $u\in W^{2,p}_{loc}(\Omega)$, we have that 
	\begin{equation}\label{eq-prop:unw}
		\Delta_ru\to \frac 12 \div(M\nabla u)
	\end{equation}
	in $L^p_{loc}(\Omega)$, as $r\to 0$.
\end{prop}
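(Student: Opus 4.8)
The plan is to prove the two inclusions $W^{2,p}_{loc}(\Omega)\subseteq\AMV^p_{loc}(\Omega)$ (together with the convergence \eqref{eq-prop:unw}) and $\AMV^p_{loc}(\Omega)\subseteq W^{2,p}_{loc}(\Omega)$ separately, with Lemma \ref{lem:c2uni} as the common starting point. For the first inclusion I would begin with an exact integral representation of the $r$-laplacian. For $u\in C^2(\Omega)$ and $B_r(x)\subset\Omega$, Taylor's formula with integral remainder applied to $t\mapsto u(x+trz)$ gives
\[
u(x+rz)-u(x)-r\nabla u(x)\cdot z=r^2\int_0^1(1-t)\sum_{i,j=1}^n\partial_{ij}u(x+trz)\,z_iz_j\,\ud t ,
\]
and, since $\vint_{B^n}z_i\,\ud z=0$ by Remark \ref{rmk:sym}, averaging over $z\in B^n$ yields
\begin{equation}\label{eq:Deltarep}
\Delta_ru(x)=\vint_{B^n}\int_0^1(1-t)\sum_{i,j=1}^n\partial_{ij}u(x+trz)\,z_iz_j\,\ud t\,\ud z .
\end{equation}
By a routine approximation argument (mollification, using continuity of both sides in $u$ for fixed $r$), \eqref{eq:Deltarep} remains valid for $u\in W^{2,p}_{loc}(\Omega)$ with the Hessian read in the $L^p_{loc}$-sense.

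Minkowski's integral inequality applied to \eqref{eq:Deltarep} then gives, for every compact $K\subset\Omega$ and every sufficiently small $r$, the uniform bound $\|\Delta_ru\|_{L^p(K)}\le C_n\sup_{|v|\le r}\|D^2u\|_{L^p(K+v)}\le C_n\|D^2u\|_{L^p(\overline N_r(K))}$, so that $u\in\AMV^p_{loc}(\Omega)$. Moreover, letting $r\to0$ in \eqref{eq:Deltarep} and invoking continuity of translations in $L^p_{loc}$ (uniformly over $t\in[0,1]$ and $z\in B^n$), together with $\int_0^1(1-t)\,\ud t=\tfrac12$ and the definition of $M$, gives $\Delta_ru\to\tfrac12\sum_{i,j}m_{ij}\partial_{ij}u=\tfrac12\div(M\nabla u)$ in $L^p_{loc}(\Omega)$, which is \eqref{eq-prop:unw}.

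For the reverse inclusion, fix $u\in\AMV^p_{loc}(\Omega)$; the statement being local, we may argue on a fixed $\Omega'\Subset\Omega$. The crucial observation is that the Hausdorff measure $\Ha^n_\Omega$ is translation invariant, so it assigns to every ball $B_r(y)$ with $B_r(y)\subset\Omega$ the same value $r^n\Ha^n(B^n)$; hence the adjoint averaging operator satisfies $A_r^*=A_r$, and therefore $\Delta_r^*=\Delta_r$, on the relevant region for all small $r$. Consequently, by the Green-type identity of Remark \ref{rmk: Green}, for every $\varphi\in C_c^\infty(\Omega)$ and all small $r>0$,
\[
\int_\Omega\varphi\,\Delta_ru\,\ud\Ha^n=\int_\Omega u\,\Delta_r\varphi\,\ud\Ha^n .
\]
By Lemma \ref{lem:c2uni} the right-hand side converges, as $r\to0$, to $\tfrac12\int_\Omega u\,\div(M\nabla\varphi)\,\ud\Ha^n=\tfrac12\sum_{i,j}m_{ij}\int_\Omega u\,\partial_{ij}\varphi\,\ud\Ha^n$. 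On the other hand, the definition of $\AMV^p_{loc}$, the reflexivity of $L^p$ for $p>1$, and a diagonal argument over an exhaustion of $\Omega$ by compact sets produce a sequence $r_k\to0$ and $v\in L^p_{loc}(\Omega)$ with $\Delta_{r_k}u\rightharpoonup v$ weakly in $L^p(K)$ for every compact $K\subset\Omega$. Passing to the limit along $(r_k)$ in the displayed identity yields $\tfrac12\div(M\nabla u)=v$ in the sense of distributions, hence $\div(M\nabla u)\in L^p_{loc}(\Omega)$. Since $M$ is symmetric and positive definite (Remark \ref{rmk:sym}), $\tfrac12\div(M\nabla\,\cdot\,)$ is a constant-coefficient, uniformly elliptic operator, and interior $L^p$-regularity (Calderón--Zygmund estimates, applied to mollifications $u_\varepsilon=u\ast\rho_\varepsilon$ to pass from the a priori bound to genuine regularity) gives $u\in W^{2,p}_{loc}(\Omega)$. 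Finally, the first part applies to $u$ and shows that the full family $\Delta_ru$ converges to $\tfrac12\div(M\nabla u)$ in $L^p_{loc}(\Omega)$, so in particular $v=\tfrac12\div(M\nabla u)$.

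The \emph{main obstacle} is the reverse inclusion: one must correctly identify the weak limit of $\Delta_ru$ as $\tfrac12\div(M\nabla u)$ — which is precisely where translation invariance (yielding $\Delta_r^*=\Delta_r$) and Lemma \ref{lem:c2uni} for smooth test functions are used — and then upgrade the mere $L^p_{loc}$-membership of $\div(M\nabla u)$ to $W^{2,p}_{loc}$-regularity of $u$ by interior elliptic regularity. Both the weak-compactness extraction and the Calderón--Zygmund estimate fail at $p=1$, which is why the hypothesis $p>1$ is imposed.
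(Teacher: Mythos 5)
Your argument is correct, and the two halves compare differently with the paper's proof. The reverse inclusion $\AMV^p_{loc}(\Omega)\subseteq W^{2,p}_{loc}(\Omega)$ is essentially identical to the paper's: extract a weak $L^p_{loc}$-limit of $\Delta_{r_k}u$ (this is where $p>1$ enters), use self-adjointness of $\Delta_r$ for the translation-invariant measure together with Lemma \ref{lem:c2uni} on test functions to identify the limit as the distribution $\tfrac12\div(M\nabla u)$, and conclude by interior elliptic regularity (the paper cites \cite[Theorem 6.29]{grubb} where you invoke Calder\'on--Zygmund estimates; both are fine, and your explicit justification of $\Delta_r^*=\Delta_r$ via translation invariance is a detail the paper leaves implicit). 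The forward inclusion and the $L^p_{loc}$-convergence \eqref{eq-prop:unw}, however, you handle by a genuinely different and more self-contained route: the exact Taylor representation with integral remainder, Minkowski's integral inequality, and continuity of translations in $L^p$. The paper instead quotes two external results --- the limit identity for averaged Taylor remainders from \cite{bik} to get membership in $\AMV^p_{loc}$, and the pointwise maximal-function bound $|\Delta_ru|\le \mathcal M_{\sqrt n\sigma r}|\nabla^2u|$ from \cite{bhs} combined with smooth approximation to get the convergence. Your version avoids the maximal-function machinery entirely and yields the quantitative bound $\|\Delta_ru\|_{L^p(K)}\le C\|D^2u\|_{L^p(\overline N_r(K))}$ plus convergence in one stroke; the only point to be careful about, which you correctly flag, is that the Taylor identity must be extended from $C^2$ to $W^{2,p}_{loc}$ by mollification before Minkowski's inequality is applied, and that the symmetry of $B^n$ (Remark \ref{rmk:sym}) is what kills the first-order term for a general norm $\|\cdot\|$.
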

\begin{proof}
	Assume $u\in W^{2,p}_{loc}(\Omega)$ and set
	\[
	R(x,y):=u(y)-u(x)-\sum_{i=1}^n\partial_iu(x)(y-x)_i,\quad \hbox{for $\mu$-a.e }\,x,y\in \Omega.
	\]
	Let $\Omega'\subset \Omega$ be a compactly contained $W^{2,p}$-extension domain. Then, for a ball (respectively, a unit sphere) $B_r^E$ ($\partial B_1^E$) in the Euclidean norm, by \cite[Thm 2.5, Thm 4.1 and (21)]{biki} we have that
	\begin{align}\label{eq:limex}
		\lim_{r\to 0}\int_{\Omega'}\vint_{B_r^E(x)}\left|\frac{R(x,y)}{r^2}\right|^p\ud y\ud x= c(n,p) \int_{\Omega'}\int_{\partial B_1^E(0)}\left|2\sum_{i\le j}\partial_{ij}u(x)e_ie_j \right|^p\ud e\ \ud x.
	\end{align}
	By Remark \ref{rmk:sym} we see that 
	\begin{equation}\label{eq:delta}
		\vint_{B_r(x)}\frac{R(x,y)}{r^2}\ud y=\Delta_ru(x),\quad \bar B_r(x)\subset \Omega.
	\end{equation}
	Together with \eqref{eq:limex} and the fact that $\|\cdot\|$ is comparable to the Euclidean norm, \eqref{eq:delta} implies that
	\[
	\limsup_{r\to 0}\int_{\Omega'}|\Delta_ru|^p\ud x\le \limsup_{r\to 0}\int_{\Omega'}\vint_{B_r(x)}\left|\frac{R(x,y)}{r^2}\right|^p\ud y\ud x\le c\lim_{r\to 0}\int_{\Omega'}\vint_{B_r^E(x)}\left|\frac{R(x,y)}{r^2}\right|^p\ud y\ud x<\infty.
	\]
	Since any point $x\in \Omega$ has a neighbourhood $\Omega'$ which is a $W^{2,p}$-extension domain, it follows that $u\in \AMV_{loc}^p(\Omega)$.
	
	Conversely, suppose $u\in \AMV_{loc}^p(\Omega)$. Then, for any positive sequence $(r_m)$ converging to zero there is a further subsequence and a function $g\in L^p_{loc}(\Omega)$ such that $\Delta_{r_m}u\rightharpoonup g$ weakly in $L^p_{loc}(\Omega)$ as $m\to \infty$. In particular, for any $\varphi\in C^2_c(\Omega)$, we have
	\[
	\int_\Omega \varphi g \ud x=\lim_{m\to\infty}\int_\Omega\varphi \Delta_{r_m}u\ud x=\lim_{m\to\infty}\int_\Omega\Delta_{r_m}\varphi u\ud x=\frac 12 \int_\Omega \div(M\nabla\varphi)u\ud x,
	\]
	since $\Delta_r\varphi\to \frac 12 \div(M\nabla\varphi)$ locally uniformly in $\Omega$, as $r\to 0$. This shows that $g$ is unique and agrees with the distribution $\frac 12 \div(M\nabla u)$ on $\Omega$. Thus $\div(M\nabla u)\in L^p_{loc}(\Omega)$ and we have that $u\in W^{2,p}_{loc}(\Omega)$, see e.g.~\cite[Theorem 6.29]{grubb}, applied for the differential operator $P=\Delta$.
	
	It remains to prove the convergence in~\eqref{eq-prop:unw}. For this assume $u\in W^{2,p}_{loc}(\Omega)$. Denote by $\nabla^2u$ the matrix of second weak partial derivatives of $u$ and by $Q_{s}(x)$ a concentric cube centered at $x$ with side length $s$. Then, by ~\cite[Thm 3.3. lemma 3.4]{bhs} we observe that there exist constants $C,\sigma\ge 1$, such that $C=C(n)$, for which 
	\begin{equation}\label{eq:ub}
		|\Delta_ru(x)|\le \frac{C(n)}{r^2}\int_{Q_{\sigma r}(x)}\frac{|\nabla^2u|(y)}{|x-y|^{n-2}}\ud y\le {\mathcal M}_{\sqrt{n}\sigma r}|\nabla^2u|(x), \quad B_{\sigma r}(x)\subset \Omega.
	\end{equation}
	Let $\Omega'\subset\Omega$ be compactly contained and let $(u_m)$ be a sequence of smooth functions converging to $u$ in $W^{2,p}(\Omega')$ as $m\to \infty$. In particular, $|\nabla^2(u-u_m)|\to 0$ and $\div(M\nabla(u-u_m))\to  0$ in $L^p(\Omega')$ as $m\to\infty$. By Lemma \ref{lem:c2uni} and \eqref{eq:ub} we obtain
	\begin{align*}
		&\limsup_{r\to \infty}\|\Delta_ru-\frac 12 \div(M\nabla u)\|_{L^p(\Omega')}\\
		\le& \limsup_{r\to 0}\left( \|\Delta_r(u-u_m)\|_{L^p(\Omega')}+\|\Delta_r u_m+\frac 12\div(M\nabla u_m)\|_{L^p(\Omega')}+\frac 12 \|\div(M\nabla(u-u_m))\|_{L^p(\Omega')}\right)\\
		\le & C\limsup_{r\to 0}\|{\mathcal M}_{\sqrt{n}\sigma r}|\nabla^2(u-u_m)|\|_{L^p(\Omega')}+\frac 12 \|\div(M\nabla(u-u_m))\|_{L^p(\Omega')}\\
		\le & C'\|\nabla^2(u-u_m)\|_{L^p(\Omega')}+\frac 12\|\div(M\nabla(u-u_m))\|_{L^p(\Omega')}.
	\end{align*}
	Upon letting $m\to \infty$ the claim follows.
\end{proof}

In order to deal with the ``inner product'' term in \eqref{eq:wlapl} we have the following lemma.

\begin{lemma}\label{lem:inprod}
	If $f,g\in W^{1,1}_{loc}(\Omega)$ and in addition $g\in L^n_{loc}(\Omega)$, then
	\begin{align*}
		\langle f,g\rangle_r\stackrel{r\to 0}{\longrightarrow} \langle M\nabla f,\nabla g\rangle
	\end{align*}
	pointwise almost everywhere in $\Omega$.
	
\end{lemma}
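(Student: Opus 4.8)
The plan is to establish the convergence pointwise at $\Ha^n$-a.e.\ $x\in\Omega$ by isolating the leading bilinear term — whose limit produces exactly $\langle M\nabla f(x),\nabla g(x)\rangle$ — and checking that every remaining contribution vanishes. First I would fix a point $x$ enjoying the following a.e.\ properties: $x$ is a Lebesgue point of $\nabla f$ and of $\nabla g$; $x$ is an $L^n$-Lebesgue point of $g$, i.e. $\vint_{B_r(x)}|g-g(x)|^n\ud y\to0$ (this is the place where the hypothesis $g\in L^n_{loc}(\Omega)$ enters); and $f,g$ are approximately differentiable at $x$ in the $W^{1,1}$-sense of the Calder\'on--Zygmund theorem, so that, writing $R_h(x,y):=h(y)-h(x)-\nabla h(x)\cdot(y-x)$ for $h\in\{f,g\}$, one has $\bigl(\vint_{B_r(x)}|R_h(x,y)/r|^{\,n/(n-1)}\ud y\bigr)^{(n-1)/n}\to0$. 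Since $B^n$ is symmetric we have $\vint_{B_r(x)}(y-x)\ud y=0$ and $\vint_{B_r(x)}(y-x)_i(y-x)_j\ud y=r^2 m_{ij}$ (cf.\ Remark \ref{rmk:sym}); in particular $c_r^h(x):=h_{B_r(x)}-h(x)$ is $o(r)$.

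Next I would use the elementary identity $\langle f,g\rangle_r(x)=r^{-2}\vint_{B_r(x)}(f(y)-f_{B_r(x)})(g(y)-g_{B_r(x)})\ud y$ and insert the decomposition $f(y)-f_{B_r(x)}=\nabla f(x)\cdot(y-x)-c_r^f(x)+R_f(x,y)$ (and likewise for $g$). The ``linear $\times$ linear'' part integrates, using $\vint_{B_r(x)}(y-x)\ud y=0$ and $\vint_{B_r(x)}(y-x)_i(y-x)_j\ud y=r^2m_{ij}$, to $r^2\langle M\nabla f(x),\nabla g(x)\rangle+c_r^f(x)c_r^g(x)$, hence contributes $\langle M\nabla f(x),\nabla g(x)\rangle+o(1)$ after dividing by $r^2$. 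The cross terms, of the form $r^{-2}\vint_{B_r(x)}R_f(x,y)\bigl[\nabla g(x)\cdot(y-x)-c_r^g(x)\bigr]\ud y$ and its symmetric counterpart, are bounded by $c_0|\nabla g(x)|\,r^{-1}\vint_{B_r(x)}|R_f(x,y)|\ud y+|c_r^g(x)||c_r^f(x)|r^{-2}$ (and the mirror expression), which tends to $0$ by the $L^1$-approximate differentiability of $f$ and the bound $c_r^h(x)=o(r)$; here I use that $\|\cdot\|$ is comparable to the Euclidean norm, so $|y-x|\le c_0 r$ on $B_r(x)$.

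The remaining contribution is the ``remainder $\times$ remainder'' term $r^{-2}\vint_{B_r(x)}R_f(x,y)R_g(x,y)\ud y$, and this is where the main obstacle lies and why $g\in L^n_{loc}(\Omega)$ rather than merely $g\in W^{1,1}_{loc}(\Omega)$ is needed. The strategy is to pair the two factors by H\"older's inequality with the conjugate exponents $n/(n-1)$ and $n$: the factor $R_f$ is governed by the Calder\'on--Zygmund $L^{n/(n-1)}$-approximate differentiability of $f\in W^{1,1}_{loc}$, while for $R_g$ one writes $R_g(x,y)=(g(y)-g(x))-\nabla g(x)\cdot(y-x)$ and exploits the $L^n$-Lebesgue point property of $g$ to make the relevant $L^n$-average of $R_g$ small; the term $\nabla g(x)\cdot(y-x)$ is harmless since it is $O(r)$ pointwise. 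Carrying out this estimate requires care with the scaling (the $r^{-2}$ must be split suitably between the two factors), and I would most likely first replace $g(y)-g(x)$ by the mean-zero quantity $g(y)-g_{B_r(x)}$ — absorbing the discrepancy into the already controlled $c_r^f(x)c_r^g(x)r^{-2}$ term and into an expression dominated by $|\nabla g(x)|\,r^{-1}\vint_{B_r(x)}|R_f(x,y)|\ud y$ — and then invoke the Sobolev--Poincar\'e inequality on $B_r(x)$ together with the $L^n$-Lebesgue point of $g$; if a direct estimate proves too fragile, an alternative is to run a truncation/approximation scheme, reducing $g$ to a Lipschitz function (on a large set, in the sense of Haj\l asz truncation) for which this term is trivially controlled, and estimating the Lipschitz defect using $g\in W^{1,1}_{loc}\cap L^n_{loc}$. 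Once this term is shown to vanish as $r\to0$, the pointwise convergence $\langle f,g\rangle_r(x)\to\langle M\nabla f(x),\nabla g(x)\rangle$ follows at a.e.\ $x$.
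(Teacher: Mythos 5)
Your decomposition and your treatment of the leading and cross terms coincide with the paper's argument: both rest on the symmetry of $B^n$ (Remark \ref{rmk:sym}) and on the Calder\'on--Zygmund a.e.\ $L^{n/(n-1)}$-approximate differentiability of $W^{1,1}_{loc}$-functions, and the two cross terms vanish after division by $r^2$ because the linear factors are $O(r)$ pointwise while $\vint_{B_r(x)}|R_f|\ud y=o(r)$. You also correctly single out the ``remainder $\times$ remainder'' term $r^{-2}\vint_{B_r(x)}R_fR_g\,\ud y$ as the crux; it is worth noting that the paper's own displayed expansion silently omits exactly this term (its two-step identity for $\langle M\nabla f,\nabla g\rangle$ differs from a true identity by $\vint R_r^fR_r^g\,\ud z$), so you have isolated something the published proof glosses over.

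However, your proposed way of killing that term does not close. H\"older with exponents $n/(n-1)$ and $n$ needs the $L^n$-average of $R_g(x,\cdot)/r$ over $B_r(x)$ to stay \emph{bounded} as $r\to 0$, i.e. $\left(\vint_{B_r(x)}|g(y)-g(x)-\nabla g(x)\cdot(y-x)|^n\ud y\right)^{1/n}=O(r)$. An $L^n$-Lebesgue point only gives $\left(\vint_{B_r(x)}|g-g(x)|^n\ud y\right)^{1/n}=o(1)$, which is useless after dividing by $r$, and the Sobolev--Poincar\'e inequality available for $g\in W^{1,1}_{loc}$ controls the oscillation only in $L^{n/(n-1)}$, an exponent that equals $n$ precisely when $n=2$. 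So your scheme does work in dimension two, where $\frac 1r\left(\vint_{B_r(x)}|g-g_{B_r(x)}|^2\ud y\right)^{1/2}\le C\vint_{B_r(x)}|\nabla g|\ud y\le C\M|\nabla g|(x)<\infty$ a.e.\ and Cauchy--Schwarz against $\|R_f/r\|_{L^2}\to 0$ finishes; but for $n\ge 3$ neither $g\in W^{1,1}_{loc}$ nor $g\in L^n_{loc}$ supplies the required $O(r)$ bound, and the truncation fallback you sketch would still have to produce it off the good set. Note that in the paper the lemma is only ever invoked with $g=w$ locally Lipschitz, in which case $|R_g(x,y)|\le C\,\LIP(w)\,r$ pointwise and the term is immediately $o(1)$ by pairing with $\vint_{B_r(x)}|R_f|\ud y=o(r)$; under that extra hypothesis your argument is complete.
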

\begin{proof}
	The Sobolev embedding implies that $f\in L_{loc}^{n/(n-1)}(\Omega)$. Thus  $fg\in L^1_{loc}(\Omega)$ and $\langle f,g\rangle_r$ is finite. Sobolev functions in $W^{1,1}_{loc}(\Omega)$ satisfy the following approximation by tangent planes for almost every $x\in \Omega$, cf. \cite[Subsection 6.1.2, Theorem 2]{gar-ev}:
	\begin{align}\label{eq:ptest}
		\lim_{r\to 0}\left(\vint_{B_1^n(x)}\left|\frac{f(x+rz)-f(x)-\nabla f(x)\cdot(rz)}{r}\right|^{n/(n-1)}\ud z\right)^{(n-1)/n}=0.
	\end{align}
	It follows that
	\begin{equation}\label{eq:fineprop}
		\lim_{r\to 0}\left(\vint_{B_1^n(x)}\left|\frac{f(x+rz)-A_rf(x)-\nabla f(x)\cdot(rz)}{r}\right|^{n/(n-1)}\ud z\right)^{(n-1)/n}=0,
	\end{equation}
	since
	\begin{align*}
		&\left(\vint_{B_1^n(x)}\left|\frac{f(x+rz)-A_rf(x)-\nabla f(x)\cdot(rz)}{r}\right|^{n/(n-1)}\ud z\right)^{(n-1)/n}\\
		\le& \left(\vint_{B_1^n(x)}\left|\frac{f(x+rz)-f(x)-\nabla f(x)\cdot(rz)}{r}\right|^{n/(n-1)}\ud z\right)^{(n-1)/n}+\left|\frac{A_rf(x)-f(x)}{r}\right|\\
		\le & 2\left(\vint_{B_1^n(x)}\left|\frac{f(x+rz)-f(x)-\nabla f(x)\cdot(rz)}{r}\right|^{n/(n-1)}\ud z\right)^{(n-1)/n}.
	\end{align*}

	For the next calculations we use the shorthand 
	\[
	R_r^f(x,z):= \frac{f(x+rz)-A_rf(x)-\nabla f(x)\cdot(rz)}{r},\quad \bar B_r(x)\subset \Omega.
	\]
	The definition of the matrix $M$ yields
	\begin{align*}
		\langle M\nabla f(x),\nabla g(x)\rangle=\vint_{B_1^n(x)}\sum_{i,j=1}^n\partial_if(x)\partial g_j(x)z_iz_j\ud z=\vint_{B_1^n(x)}(\nabla f(x)\cdot z)(\nabla g\cdot z)\ud z
	\end{align*}
	which may be expanded to 
	\begin{align*}
		\langle M\nabla f(x),\nabla g(x)\rangle=&\vint_{B_1^n(x)}\frac{(f(x+rz)-A_rf(x))}{r}(\nabla g(x)\cdot z)\ud z-\vint_{B_1^n(x)}R_r^f(x,z)(\nabla g(x)\cdot z)\ud z\\
		=&\vint_{B_1^n(x)}\frac{f(x+rz)-A_rf(x)}{r}\frac{g(x+rz)-A_rg(x)}{r}\ud z\\
		&-\vint_{B_1^n(x)}[R_r^g(x,z)(\nabla f(x)\cdot z)+R_r^f(x,z)(\nabla g(x)\cdot z)]\ud z.
	\end{align*}
	Thus
	\begin{align*}
		\langle f,g\rangle_r(x)-\langle M\nabla f(x),\nabla g(x)\rangle=\vint_{B_1^n(x)}[R_r^g(x,z)(\nabla f(x)\cdot z)+R_r^f(x,z)(\nabla g(x)\cdot z)]\ud z
	\end{align*}
	tends to zero as $r\to 0$ for almost every $x\in \Omega$, by \eqref{eq:fineprop}.
\end{proof}

The proof of Theorem \ref{thm:weight2} can now be reduced to the unweighted case using the results above.

\begin{proof}[Proof of Theorem \ref{thm:weight2}]
	We first observe that, if $u\in W^{1,p}_{loc}(\Omega)$, then
	\begin{align*}
		|\langle u,w\rangle_r(x)|\le & \vint_{B_r(x)} \frac{|u-A_ru(x)|}{r}\frac{|w-A_rw(x)|}{r}\ud y \\
		\le &\|\nabla w\|_{L^\infty(B_r(x))}\vint_{B_r(x)}|u-A_ru(x)|/r\ud y\le C\|\nabla w\|_{L^\infty(B_r(x))}\M_r|\nabla u|(x),
	\end{align*}
	where by $\M_r$ we denote the $r$-restricted maximal function. It follows that, for each compactly contained $\Omega'\subset \Omega$ there exists a function $g\in L^p(\Omega')$ such that 
	\begin{equation}\label{eq:ptb}
		|\langle u,w\rangle_r|\le g
	\end{equation}
	almost everywhere in $\Omega'$. Together with Lemma \ref{lem:inprod}, \eqref{eq:ptb} implies that 
	\begin{equation}\label{eq:lpconv}
		\langle u,w\rangle_r\to \langle M\nabla u,\nabla w\rangle\quad \textrm{ in }\ L^p_{loc}(\Omega)
	\end{equation}
	
	Suppose $u\in W^{2,p}_{loc}(\Omega)$. Then $\langle u,w\rangle_r\to \langle M\nabla u,\nabla w\rangle$  and $\Delta_ru\to \frac 12 \div(M\nabla u)$ in $L^p_{loc}(\Omega)$. 
	Thus Lemma \ref{lem:rep} \eqref{eq:wlapl} implies that 
	\[
	\Delta_r^wu\stackrel{r\to 0}{\longrightarrow} L_wu
	\]
	in $L^p_{loc}(\Omega)$, which also implies that $u\in \AMV^p_{loc}(\Omega_w)$.
	
	Conversely, if $u\in W^{1,p}_{loc}(\Omega)\cap \AMV_{loc}^p(\Omega_w)$, Lemma \ref{lem:rep}\eqref{eq:wlapl} and \eqref{eq:ptb} imply that, for every compactly contained $\Omega'\subset \Omega$ there exists a function $g\in L^p(\Omega')$ such that 
	\[
	|\Delta_ru|\le |\Delta_r^wu|+\frac{1}{A_rw}g\quad\textrm{ on }\ \Omega'. 
	\]
	Thus $u\in \AMV^p_{loc}(\Omega)$ which, by Proposition \ref{prop:unweight}, implies that $u\in W^{2,p}_{loc}(\Omega)$.
\end{proof}
Using Lemma \ref{lem:rep} \eqref{eq:2ident} it is possible to prove that, for $w\in W^{2,\infty}_{loc}(\Omega)$, the classes $\AMV_{loc}^p(\Omega_w)$ agree with $W^{2,p}_{loc}(\Omega)$.
\begin{theorem}
	Suppose $w\in W^{2,\infty}_{loc}(\Omega_w)$, and $p\in (1,\infty)$. Then $\AMV^{p}_{loc}(\Omega_w)=W^{2,p}_{loc}(\Omega)$. For every $u\in \AMV_{loc}^p(\Omega_w)$ we have that $\Delta_r^wu\to L_wu$ in $L^p_{loc}(\Omega)$, as $r\to 0$.
\end{theorem}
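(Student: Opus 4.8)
The inclusion $W^{2,p}_{loc}(\Omega)\subset \AMV^p_{loc}(\Omega_w)$, together with the convergence $\Delta_r^wu\to L_wu$ in $L^p_{loc}(\Omega)$ for $u\in W^{2,p}_{loc}(\Omega)$, is already contained in Theorem \ref{thm:weight2}. So the plan is to prove the reverse inclusion $\AMV^p_{loc}(\Omega_w)\subset W^{2,p}_{loc}(\Omega)$; once this is known, the convergence statement follows at once from Theorem \ref{thm:weight2}.

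The idea is to use the multiplicative representation \eqref{eq:2ident} rather than the additive one \eqref{eq:wlapl}. Rewriting \eqref{eq:2ident} as
\[
\Delta_r(uw)=(A_rw)\,\Delta_r^wu+u\,\Delta_rw,
\]
I would estimate both terms on the right over an arbitrary compact $K\subset\Omega$ for all sufficiently small $r$. Since $w$ is continuous and strictly positive, $A_rw\to w$ locally uniformly (Remark \ref{rmk:locdoubl}(1)), so $\sup_{0<r<r_0}\|A_rw\|_{L^\infty(K)}<\infty$. Since $w\in W^{2,\infty}_{loc}(\Omega)$, the pointwise bound \eqref{eq:ub} from the proof of Proposition \ref{prop:unweight} gives $|\Delta_rw|\le \mathcal M_{\sqrt{n}\sigma r}|\nabla^2w|$, whence $\sup_{0<r<r_0}\|\Delta_rw\|_{L^\infty(K)}<\infty$ as well. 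As $u\in L^p_{loc}(\Omega)$, and as the $L^p(K,\Ha^n_\Omega)$- and $L^p(K,w\Ha^n_\Omega)$-norms are comparable on compact sets (Remark \ref{rmk:locdoubl}(2)), the hypothesis $u\in\AMV^p_{loc}(\Omega_w)$ then yields $\limsup_{r\to0}\|\Delta_r(uw)\|_{L^p(K)}<\infty$ for every compact $K$; that is, $uw\in\AMV^p_{loc}(\Omega)$. By Proposition \ref{prop:unweight} we conclude $uw\in W^{2,p}_{loc}(\Omega)$.

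The final step is to divide by $w$. Since $w\in W^{2,\infty}_{loc}(\Omega)$ is bounded away from zero on compact sets, the chain and quotient rules for Sobolev functions give $1/w\in W^{2,\infty}_{loc}(\Omega)$ (indeed $\nabla(1/w)=-\nabla w/w^2\in W^{1,\infty}_{loc}$). Applying the Leibniz formula $\nabla^2(fg)=(\nabla^2f)\,g+2\,\nabla f\otimes\nabla g+f\,\nabla^2g$ with $f=uw\in W^{2,p}_{loc}(\Omega)$ and $g=1/w\in W^{2,\infty}_{loc}(\Omega)$ shows that $u=(uw)(1/w)\in W^{2,p}_{loc}(\Omega)$, and then Theorem \ref{thm:weight2} provides the convergence $\Delta_r^wu\to L_wu$ in $L^p_{loc}(\Omega)$.

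The routine ingredients here are the Sobolev product and quotient rules and the comparison of weighted and unweighted $L^p$-norms. The one genuinely load-bearing point — and the reason the hypothesis $w\in W^{2,\infty}_{loc}(\Omega)$, rather than mere local Lipschitz continuity, is needed — is the uniform-in-$r$ local bound on $\Delta_rw$: it is exactly this bound that lets me transfer finiteness of the \emph{weighted} amv-norm of $u$ to finiteness of the \emph{unweighted} amv-norm of $uw$ through \eqref{eq:2ident}, thereby avoiding the $u\in W^{1,p}_{loc}(\Omega)$ assumption that the alternative representation \eqref{eq:wlapl} would have imposed (there one must control $\langle u,w\rangle_r$, which in general requires a first-order bound on $u$).
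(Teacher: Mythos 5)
Your proposal is correct and follows essentially the same route as the paper: both use the identity \eqref{eq:2ident} in the form $\Delta_r(uw)=(A_rw)\Delta_r^wu+u\,\Delta_rw$ to deduce $uw\in\AMV^p_{loc}(\Omega)$, invoke Proposition \ref{prop:unweight} to get $uw\in W^{2,p}_{loc}(\Omega)$, and divide by $w$ using $w^{-1}\in W^{2,\infty}_{loc}(\Omega)$. Your version merely fills in the uniform-in-$r$ bounds on $A_rw$ and $\Delta_rw$ that the paper's sketch leaves implicit, and cites Theorem \ref{thm:weight2} for the final convergence where the paper recomputes the limit directly from \eqref{eq:2ident}.
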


\begin{proof}
	We sketch the proof and omit the details.
	
	Identity \eqref{eq:2ident} yields $|\Delta_r^w(uw)|\le A_rw|\Delta_r^wu|+|u||\Delta_rw|$ for $u\in \AMV_{loc}^p(\Omega_w)$. By Proposition \ref{prop:unweight} we have that $uw\in W^{2,p}_{loc}(\Omega)$ which implies $u\in W^{2,p}_{loc}(\Omega)$ since $w^{-1}\in W^{2,\infty}_{loc}(\Omega)$. Using \eqref{eq:2ident} and Proposition \ref{prop:unweight} we have the convergence 
	\begin{align*}
		\Delta_r^wu &\stackrel{r\to 0}{\longrightarrow} \frac {1}{2w}(\div(M\nabla(uw)))-u\div(M\nabla w))\\
		&=\frac{1}{2w}(u\div(M\nabla w)+2\langle \nabla w,M\nabla u\rangle+w\div(M\nabla u)-u\div(M\nabla w)\nonumber)=L_wu
	\end{align*}
	in $L^p_{loc}(\Omega)$, whenever $u\in W^{2,p}_{loc}(\Omega)$.
\end{proof}

\subsection{PDE characterization of amv-harmonicity}
We are now in a position to prove Theorem \ref{thm:weight}. For clarity, we formulate the following weak counterpart of Proposition \ref{prop:unweight} as a separate lemma. We omit the proof, which follows e.g. by an approximation argument from Proposition \ref{prop:unweight}, or via a direct argument in the spirit of Proposition \ref{prop:unweight}.

\begin{lemma}\label{lem:weak}
	Let $u\in W^{1,2}_{loc}(\Omega)$. Then $\Delta_ru\to \frac 12 \div(M\nabla u)$ weakly, i.e.
	\begin{align*}
		\lim_{r\to 0}\int_\Omega \varphi\Delta_ru\ud x=-\frac 12 \int\langle\nabla\varphi,M\nabla u\rangle\ud x
	\end{align*}
	for every $\varphi\in C_c^\infty(\Omega)$.
\end{lemma}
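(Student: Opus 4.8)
The plan is to move the operator $\Delta_r$ off $u$ and onto the smooth test function, and then combine Lemma \ref{lem:c2uni} with an integration by parts. Fix $\varphi\in C_c^\infty(\Omega)$, write $K=\spt\varphi$, and fix $r_0>0$ so that $\overline N_{r_0}(K)$ is a compact subset of $\Omega$. First I would record the duality identity
\[
\int_\Omega\varphi\,\Delta_r u\,\ud x=\int_\Omega u\,\Delta_r\varphi\,\ud x,\qquad 0<r\le r_0.
\]
This is the Green-type formula of Remark \ref{rmk: Green} specialized to the present setting: since $\Ha^n_\Omega$ is translation invariant, $\Ha^n_\Omega(B_r(x))=\Ha^n_\Omega(rB^n)$ is independent of $x$, so $A_r^*=A_r$ and hence $\Delta_r^*=\Delta_r$. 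If one prefers to avoid the global integrability hypotheses of Remark \ref{rmk: Green}, the identity follows directly from the Fubini theorem by interchanging the two integration variables in
\[
\int_\Omega\varphi(x)\vint_{B_r(x)}\frac{u(y)-u(x)}{r^2}\,\ud y\,\ud x,
\]
the set $\{\|x-y\|<r\}$ being symmetric in $x,y$ and $\Ha^n_\Omega(B_r(x))$ constant; the double integral is absolutely convergent because $\varphi$ is bounded with compact support and $u\in L^1_{loc}(\Omega)$.

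Next I would pass to the limit $r\to 0$ on the right-hand side. By Lemma \ref{lem:c2uni}, $\Delta_r\varphi\to\frac12\div(M\nabla\varphi)$ uniformly on $\overline N_{r_0}(K)$; moreover each $\Delta_r\varphi$ is supported in $\overline N_{r_0}(K)$ and the family $\{\Delta_r\varphi\}_{0<r\le r_0}$ is uniformly bounded. Since $u\in L^1_{loc}(\Omega)$ and $\overline N_{r_0}(K)$ is compact, dominated convergence yields
\[
\lim_{r\to 0}\int_\Omega\varphi\,\Delta_r u\,\ud x=\frac12\int_\Omega u\,\div(M\nabla\varphi)\,\ud x.
\]
Finally, since $M=(m_{ij})$ is symmetric (Remark \ref{rmk:sym}) we have $\div(M\nabla\varphi)=\sum_{i,j}m_{ij}\partial_{ij}\varphi$, and integrating by parts — licit because $u\in W^{1,2}_{loc}(\Omega)$ and $\varphi\in C_c^\infty(\Omega)$ — gives
\[
\frac12\int_\Omega u\sum_{i,j}m_{ij}\partial_{ij}\varphi\,\ud x=-\frac12\sum_{i,j}m_{ij}\int_\Omega\partial_i u\,\partial_j\varphi\,\ud x=-\frac12\int_\Omega\langle\nabla\varphi,M\nabla u\rangle\,\ud x,
\]
the last equality using the symmetry of $M$ once more. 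Combining the three displays proves the lemma.

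Every step here is routine, so there is no genuine obstacle; the only point worth a sentence is the justification of the duality identity, which I would handle either through the translation invariance of $\Ha^n_\Omega$ (giving $\Delta_r^*=\Delta_r$, so that Remark \ref{rmk: Green} applies) or by the bare Fubini argument above. An entirely parallel proof by approximation is also available — approximating $u$ by smooth functions in $W^{1,2}$ on a compactly contained subdomain, using Lemma \ref{lem:c2uni} (or Proposition \ref{prop:unweight}) on the smooth approximants, and controlling the error $\int_\Omega\varphi\,\Delta_r(u-u_m)\,\ud x$ via the same duality identity, bounding it by $\|u-u_m\|_{L^1(\overline N_{r_0}(K))}\,\|\Delta_r\varphi\|_\infty$ — but the route above seems the most economical.
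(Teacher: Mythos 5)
Your proof is correct and follows exactly the route the paper intends: the duality identity $\int_\Omega\varphi\,\Delta_ru\,\ud x=\int_\Omega u\,\Delta_r\varphi\,\ud x$ combined with the locally uniform convergence $\Delta_r\varphi\to\tfrac12\div(M\nabla\varphi)$ of Lemma \ref{lem:c2uni} is precisely the ``direct argument in the spirit of Proposition \ref{prop:unweight}'' that the paper alludes to (and itself uses inside the proof of that proposition). The only cosmetic point is that for the Fubini/symmetry step you should restrict to $r\le r_0/2$ so that $B_r(x)\subset\Omega$ also for $x\in N_r(K)$, ensuring the ball volumes are the constant $cr^n$ throughout.
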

\begin{proof}[Proof of Theorem \ref{thm:weight}]
	We first prove the equivalences (1)--(3). The implication (3)$ \implies$ (2) is clear, and (2) $\implies$ (1) follows from Lemmas \ref{lem:weak} and \ref{lem:rep}\eqref{eq:wlapl}. Indeed, together with \eqref{eq:lpconv} in the proof of Theorem \ref{thm:weight2}, we have that
	\begin{align*}
		0=&\lim_{r\to 0}\int_\Omega\varphi \Delta_r^wu\ud x=\lim_{r\to 0}\left(\int_\Omega \varphi \Delta_ru\ \ud x+\int_\Omega\frac{\varphi}{A_rw}\langle u,w\rangle_r\ \ud x \right)\\
		=&-\frac 12\int_\Omega \langle\nabla \varphi,M\nabla u\rangle\ud x+\int_\Omega \frac{\varphi}{w}\langle \nabla w,M\nabla u\rangle\ud x
	\end{align*}
	for every $\varphi\in C_c^\infty(\Omega)$, which is the weak formulation of $L_wu=0$. 
	
	Thus it suffices to prove the implication (1) $\implies$ (3). By elementary regularity theory, see e.g. ~\cite[Thm 8.8]{gt}, a weak solution $u\in W^{1,2}_{loc}(\Omega)$ to $L_wu=0$ satisfies $u\in W^{2,2}_{loc}(\Omega)$. Theorem \ref{thm:weight2} now directly implies that (3) holds.
	
	\bigskip\noindent It remains to prove the equivalence of (1)--(3) with (4) assuming smoothness of $w$. Since (4) $\implies$ (2), it suffices to prove that (1) $\implies$ (4). Since weak solutions of $L_wu=0$ with $w\in C^\infty(\Omega)$ are smooth (see e.g. \cite[Cor 8.11]{gt}), Lemma \ref{lem:c2uni} and \ref{lem:rep} \eqref{eq:2ident} imply that
	\[
	\Delta_r^wu=\frac{1}{A_rw}(\Delta_r(uw)-u\Delta_rw)\to \frac{1}{2w}(\div(M\nabla(uw))-u\div(M\nabla w))=L_wu=0
	\]
	locally uniformly in $\Omega$, as $r\to 0$.
\end{proof}

\bibliographystyle{plain}

\end{document}